\documentclass[12pt,a4paper,final]{article}
\usepackage[margin=25mm]{geometry}
\usepackage[utf8]{inputenc}

\usepackage{hyperref}

\usepackage{aligned-overset,amsmath,amsfonts,amssymb,%
        bbm,comment,datetime,bm,relsize,tikz}
\usepackage[normalem]{ulem}
\usepackage[notref,notcite,color]{showkeys}

\newcommand{\Bh}{\mathsf{Bh}}
\newcommand{\He}{\mathsf{He}}
\newcommand{\glo}{\mathrm{glo}}
\newcommand{\loc}{\mathrm{loc}}
\newcommand{\GE}{\mathsf{GE}}
\newcommand{\bbone}{\mathbbm{1}}

\newcommand{\ba}{\begin{array}} \newcommand{\ea}{\end{array}}
  
\newcommand{\ds}{\displaystyle} 
\newcommand{\wt}{\widetilde} \newcommand{\wh}{\widehat}
\newcommand{\ol}{\overline} \newcommand{\ul}{\underline}
\newcommand{\eps}{\varepsilon}
\def\dd{\:\mathrm{d}} % differential for integration 
\def\ee{\mathrm{e}}     % Euler's number 
\def\R{{\mathbb R}} \def\N{{\mathbb N}} 
\newcommand{\mafo}{\mathrm}
\newcommand{\dom}{\mathop{\mafo{dom}}}
\newcommand{\pl}{\partial}
\newcommand{\ti}{{\times}}
\newcommand{\tdfrac}[2]{\tfrac{\ds#1}{\ds#2}}

\newcommand{\bigset}[2]{ \big\{\, #1 \: \big| \: #2 \,\big\} }
\newcommand{\Bigset}[2]{ \Big\{\: #1 \; \Big| \; #2 \:\Big\} }
\newcommand{\norm}[2]{\pmb{\bm|} #1 \pmb{\bm|}_{#2} }

\DeclareMathOperator{\id}{id}
\DeclareMathOperator{\supp}{supp}
\newtheorem{theorem}{Theorem}[section]
\newtheorem{lemma}[theorem]{Lemma}
\newtheorem{definition}[theorem]{Definition}
\newtheorem{proposition}[theorem]{Proposition}
\newtheorem{corollary}[theorem]{Corollary}
\newtheorem{remark}[theorem]{Remark}
\newtheorem{example}[theorem]{Example}

\newcommand{\ProofFont}{\bfseries}
\newcommand{\ProofFinish}{. }
\newcommand{\PROOF}{Proof}%{{\ProofFont Proof\ProofFinish{} } }
\newcommand{\QED}{\mbox{}\hfill\rule{5pt}{5pt}\medskip\par}
\newenvironment{proof}[1][\PROOF]{{\ProofFont#1\ProofFinish}}{\QED}

\def\bbG{{\mathbb G}} \def\bbH{{\mathbb H}} 
 \def\bbK{{\mathbb K}} 
 \def\bbN{{\mathbb N}} 
\def\bbP{{\mathbb P}}  \def\bbR{{\mathbb R}}

\def\calA{{\mathcal A}}  
\def\calD{{\mathcal D}} \def\calE{{\mathcal E}}

\def\calP{{\mathcal P}}  \def\calR{{\mathcal R}}

  \def\rmc{{\mathrm c}} 
\def\rmd{{\mathrm d}}

\def\rmA{{\mathrm A}}  \def\rmC{{\mathrm C}} 
\def\rmD{{\mathrm D}}   
 \def\rmH{{\mathrm H}}  
  \def\rmL{{\mathrm L}}

 \def\rmW{{\mathrm W}}  
 
\def\FG{\mathbf}

  \def\bfL{{\FG L}} 
   
\def\bfP{{\FG P}}   
 \def\bfT{{\FG T}}

\numberwithin{equation}{section}
\numberwithin{figure}{section}

\begin{document}

%\title{Quasistatic one-dimensional viscoelasticity\\ as gradient flow}
\title{Gradient-flow characterizations of \\ 
 one-dimensional quasistatic viscoelasticity\\ 
with Bhattacharya-like viscosity}

\author{ Alexander Mielke%
\thanks{Weierstraß-Institut f\"ur Angewandte
   Analysis und Stochastik, Wilhelm-Anton-Amo-Str.\,39, 10117 Berlin,
   Germany, \texttt{alexander.mielke@wias-berlin.de}.}
\ 
and Billy Sumners%
 \thanks{Maxwell Institute for Mathematical Sciences, 
    Department of Mathematics, Heriot-Watt University, Edinburgh, 
    UK, EH14 4AS, \texttt{w.b.sumners@sms.ed.ac.uk}. 
\hfill
\tiny \usdate\today,\currenttime}
}

\date{22 April 2026}
 
\maketitle

\vspace{-1em}

{\itshape \small 
 \begin{center}
    It is our pleasure to dedicate this work to Robin Knops, in admiration of his foundational contributions to the analysis and community of linear and nonlinear elasticity
 \end{center}
}

% {\footnotesize \tableofcontents} 

\vspace{0.5em}

\begin{abstract}
 We study the equation of one-dimensional quasistatic nonlinear
viscoelasticity with Dirichlet boundary conditions, in the particular case that
the underlying dissipation geometry (provided by the viscosity) is comparable
to the Bhattacharya metric on probability densities. We establish a global
existence result for weak solutions, with an approach based on a spatial
discretization allowing us to work directly with the Riemannian metric
associated to the viscosity. Strong convergence of spatially discrete solutions
is shown directly -- this is possible thanks to Lipschitz estimates achieved
locally on energy sublevels enabled by an explicit derivation of the stretching
of tangent vectors under the flow in the discrete setting and the relationship
to the Bhattacharya metric. We furthermore prove gradient-flow representations
for the solutions: they are curves of maximal slope and, under a
global convexity hypothesis on the energy sublevels, we prove they satisfy a
metric evolutionary variational inequality.
\\
\textbf{Key words:} nonlinear viscoelasticity, gradient system, Hellinger and
Bhattacharya distance, dissipative distance, contraction/expansion estimates,
sublevel evolutionary variational inequality
\\
\textbf{MSC 2020:} 74D10, % Nonlinear constitutive equations
35A01, % Existence problems for PDEs
35A15, % Variational methods applied to PDEs
35Q74, % PDEs in connection with mechanics of deformable solids
% 49J40, % Variational inequalities
58E35 % Variational inequalities (global problems) in infinite-dimensional spaces
% 53C22  % Geodesics
\end{abstract}

\section{Introduction}

We consider elastoplastic deformations of a one-dimensional bar occupying the
reference domain $\Omega=(0,\ell)\subset \R^1$.  The deformation of the bar is
restricted by the Dirichlet boundary conditions $u(t,0)=0$ and
$u(t,\ell)=\mu>0$. Without loss of generality we will assume $\mu = \ell = 1$
from now on. Hence, the strain variable $p(t)=p(t,x)=\pl_x u(t,x) >0 $ is
assumed to satisfy the condition
\[
 p(t,\cdot) = 
  \bfP := \biggl\{
    p\in \rmL^1(\Omega) \biggm| \int_\Omega p \dd x=1, \ 
    p > 0 \text{ a.e.} 
  \biggr\}.
\]
Hence, $ \bfP $ stands for positive probability densities in $\rmL^1(\Omega)$.  

The equation for one-dimensional viscoelasticity (see
e.g.\ \cite{Dafe69MIBV, KutHic88IBVP, AntSei96QHPE, Seng21NVSR} and
\cite[XVI.3]{Antm95NPE}) reads 
\begin{equation}
  \label{eq:I.ViscoElast}
  \rho \ddot u = \pl_x \Big( W'(\pl_x u) -g(x)  + \nu(\pl_x u) \pl_x\dot u\Big).  
\end{equation}
Assuming quasistationarity means neglecting inertia, see \cite{NaOrRe94BHHE,
  DonMar04CSLM, MiPeMa08CSKV, KnoQui18QSAL, Knop19QSAI, KroRou20QVSC} for the
justification of quasistationarity in similar cases. This limit amounts to
setting $\rho=0$, and we can then integrate the equation once in
$x \in \Omega$. Upon using $p\in \bfP$ and defining $k(p)=k_p=1/\nu(p)$, we
find the equation
\begin{equation}
  \label{eq:I.BasicEq} 
  \dot p = -\bbK(p)\big( W'(p)- G(x)\big) \quad\text{with}\quad 
  \bbK(p) \xi  := k_p \biggl( \xi  - \frac{[k_p \xi ]}{[k_p]} \biggr) 
\end{equation}
(see also \cite{MiOrSe14ANVM, BalSen15QNVG, ParPeg25CNNG})
where we use the averaging notation
\[
  [f] := \int_\Omega f(x) \, \rmd{x}, \quad f \in \rmL^1(\Omega).
\]
The previous works on this model often use constant (inverse) viscosity
$ k\equiv k_0$ corresponding to the case of a Kelvin solid, see e.g.\
\cite{Pego92SGSC, BalSen15QNVG, ParPeg25CNNG}. In \cite{MiOrSe14ANVM} the more
general class $ k(p)=p^\alpha$ is considered, but the main existence result is
restricted to the case $k(p)=\kappa p$. In this paper we allow general inverse
viscosities satisfying
\begin{equation}
  \label{eq:k(p)bounds}
  \exists\,  \ol\kappa\geq\ul\kappa >0\ \forall\, p\geq 0:\quad  
\ul\kappa\, p\leq 
k(p) \leq \ol\kappa\, p.
\end{equation}
However we expect that many of our results can be transferred to more general
situations.  Hence, for $p\in \bfP$ we have
$ \ul\kappa \leq [k_p]\leq \ol\kappa $ which will simplify many of our
arguments but excludes the Hilbert-space gradient-flow theory corresponding to
$\nu=k\equiv 1$ as treated in \cite{Pego92SGSC, BalSen15QNVG, ParPeg25CNNG}.

Our approach is strongly motivated by the general theory of gradient systems as
described in \cite{AmGiSa05GFMS, Miel23IAGS} and is based on the gradient
system $ \big( \bfP, \calE, \bbK \big)$, along with finite-dimensional
approximations $ \big( \bfP_N, \calE_N, \bbK_N \big)$ inspired in part by
\cite{BalSen15QNVG}.  The total elastic stored energy is given by the energy
functional
\[
\calE(p)=\int_\Omega\Big(  W(p(x)) - G(x) p(x)\Big) \dd x ,
\]
where $W:{]0,\infty[} \to \R$ is the stored energy density which is continuous and
satisfies $W(p)\to + \infty$ for $p\searrow 0$ and for $p\to +\infty$. Further
properties of $W$ will be imposed later.  Usually the loading term in the
energy is written $-\int_\Omega g(x) u(x)\dd x$ but defining $G(x)=-\int_x^1
g(x')\dd x' $ and using $u(0)=0$ we can replace this by $\int_\Omega G p \, \rmd{x}$. 
Since $g\in \rmL^1(\Omega)$ is a standard assumption we have $G\in
\rmW^{1,1}(\Omega) \subset \rmC^0(\ol\Omega)$. 

The viscosity is encoded via  the linear Onsager operator $ \xi \mapsto
\bbK(p) \xi $ that depends on the state $p \in \bfP$ and can be seen as the
self-adjoint operator associated with the 
nonnegative quadratic form  
\begin{equation}
  \label{eq:def.calR*}
  \calR^*(p, \xi )
  := \frac{1}{2} \langle \xi, \bbK(p)\xi \rangle = 
   \frac12 \int_\Omega k_p \biggl( \xi- \frac{[k_p \xi ]}{[k_p]} \biggr)^2 \dd x
  = \frac12 \int_\Omega k_p \xi^2 \dd x - \frac{[k_p \xi ]^2}{[k_p]} .
\end{equation}
The associated gradient-flow equations are given abstractly in the form
\[
\dot p = - \bbK(p) \,\rmD\calE(p) \qquad \text{or} \qquad 
\dot p^N = - \bbK_N(p^N) \,\rmD\calE_N(p^N),
\]
such that the first equation equals \eqref{eq:I.BasicEq}, and the second is the
finite-dimensional approximation.  

The functional $\calR^*$ is called the dual dissipation potential arising from
the viscosity law
$\bfT_\mafo{visc}(\pl_x{u},\pl_x \dot u) = \nu(\pl_x u) \pl_x \dot u$. The primal
dissipation potential is defined via
\begin{equation}
  \label{eq:def.calR}
  \calR(p, \dot p)
  := \frac{1}{2} \langle \bbG(p) \dot p, \dot p \rangle
  = \frac12 \int_\Omega \nu(p) \,(\dot p)^2 \dd s 
  =  \frac12 \int_\Omega \frac{(\dot p)^2}{k(p)}  \dd x
\end{equation}
such that $\bfT_\mafo{visc}(p,\dot p)=\pl_{\dot p} \calR(p,\dot p)$.  The main
idea of this paper is to consider the viscous dissipation potential as a
state-dependent Riemann tensor $\bbG$ on the state space $\bfP$ and do as much
as possible of the analysis for equation \eqref{eq:I.BasicEq} using this
infinitesimal metric and an associated global distance
$\calD^k:\bfP\ti \bfP \to {[0,\infty[}$, which is not defined a priori to be
the usual geodesic distance, but is a suitable limit of geodesic distances
$\calD^k_N$ on the finite-dimensional approximation space $\bfP_N$. This allows
us to quite readily transfer results from the discrete situation.

We mention that the case $\nu(p)=\nu_0/p $ or equivalently $k(p)=\kappa\, p$
plays a special role. One the one hand, it arises as the Lagrangian form of the
linear Newtonian viscosity of a fluid (see e.g.\
\cite[Eqn.\,(4.29)]{ZaPeTh23GFRF}) and hence has a clear mechanical
interpretation. On the other hand, the distance $\calD^k$ can be obtained
explicitly in terms of the so-called Bhattacharya distance $\Bh$ or spherical
Hellinger distance, see e.g.\ \cite{LasMie19GPCA,Miel25?NHDV}. 
More precisely $k(p)= \kappa\, p $ gives the explicit relations 
\begin{equation*}
  \calD^k(p_0,p_1)
  = \frac{2}{\sqrt{\kappa}} \, \Bh(p_0,p_1)
  = \frac{4}{\sqrt{\kappa}} \arcsin\biggl( \frac12 \,\He(p_0,p_1)\biggr) \text{ with } 
  \He(p_0,p_1) = \bigl\lVert \sqrt{p_1} - \sqrt{p_0} \bigr\rVert_{\rmL^2} .
\end{equation*}
In particular, assumption \eqref{eq:k(p)bounds} implies $\frac2{\sqrt{\ol \kappa}}
\Bh(p_0,p_1) \leq \calD^k(p_0,p_1) \leq \frac2{\sqrt{\ul\kappa}} \Bh(p_0,p_1)$,
which explains why we call $ p \mapsto \nu(p)=1/k(p)$ a 
\emph{Bhattacharya-like viscosity}. 

The advantage of using $\calD^k$ rather than the more explicit $\Bh$ will be
that working with the exact inverse viscosity law identifies the intrinsic
dissipation structure, and this will allows us to derive very precise results
on the stability of solutions, namely so-called contraction/expansion
estimates.  Working with $\Bh$ will lead only to the less precise Lipschitz
estimates. For the former, it will be essential to use a fine interplay between
the viscosity law and the elastic storage properties. This interplay manifests
itself in the following three important conditions (see \eqref{eq:k.W.conds}
and \eqref{eq:Monotone})
\begin{equation}
  \label{eq:I.estim}
  \begin{gathered}
    \exists \, \lambda_W \in \R\ \forall \, p>0: \quad k(p) W''(p) + \frac{k'(p)}{2}
    W'(p) \geq \lambda_W, \\
    \exists\, B_1,B_2,B_3\in \R\ \forall p>0: \quad 
    \bigl\lvert k(p)W'(p) \bigr\rvert \leq B_1 W(p) + B_2+B_3(p{-}1),
   \\
    \exists \, p_* > 1\ \forall \, p \in {\bigl] 0,\tfrac1{p_*} \bigr]} \cup 
      {[p_*, \infty[}:  \quad k(p)\, W''(p) + \frac{k'(p)}2 W'(p) \geq 0.
  \end{gathered}
\end{equation}
The second condition clearly relates to the stress control conditions 
used in \cite{Ball84MELE,Ball02SOPE,KnZaMi10CGPM,MiRoSa18GERV}. The other
two conditions can be seen as a convexity requirement for $W$
in terms of the viscosity-adapted strain measure 
\[
b(p):= \int_0^p \!\!\frac{1}{\sqrt{k(q)}}\,\dd q .
\]
If we write $W(p)= \wt W(b(p))$, then the first and third conditions may be
rewritten as simple semi-convexity conditions, viz.  
\[
\pl_b^2 \wt W(b) \geq \lambda_W \ \text{ for all } b>0\qquad \text{and} \qquad
\pl^2_b\wt W(b) \geq 0 \ \text{ for }b\leq \frac1{b_*} \text{ or }b\geq b_*.
\] 

The plan of the paper is as follows. In Section \ref{se:EvolStep} we state all
the assumptions on our model explicitly and discuss discretized solutions
$t \mapsto p^N(t)\in \bfP_N:=\bbP_N\bfP$ that are obtained as step functions on
a uniform grid with $N$ intervals of length $1/N$. Here $\bbP_N$ is the $\rmL^2$
orthogonal projection onto these step functions.  For this we replace
$G(x)=-\int_x^1 g(x') \dd x'$ by a piecewise constant interpolant
$ G^N=\bbP_N G$ and show global existence for the ODE of dimension
$(N{-}1)$. Of course, the energy $t \mapsto \calE_N(p^N(t))$ along solutions is
nonincreasing. The main result in Theorem \ref{th:Stretching} concerns an
explicit formula for the stretching of tangent vectors $y^N$ along the
solutions $p^N$ with $\calE_N(p^N(t))\leq \calE_N(p^N(0))\leq E$ in the form
\[
\frac{\rmd}{\rmd t}\, \calR\big(p^N(t),y^N(t)\big) 
\leq - \Lambda^\mafo{inf}_N(E) \: \calR\big(p^N(t),y^N(t)\big), 
\]
where the essential point is that the infinitesimal sublevel stretching rate
$- \Lambda^\mafo{inf}_N(E) $ can be bounded from above by a function
$-\bfL^\mathrm{inf}(E)$ depending only on the energy level $E$ and not on
the discretization number $N$, see Corollary \ref{co:LowBddLambda}. For this,
the first two estimates in \eqref{eq:I.estim} are crucial.

In Section \ref{se:GeodDist} we discuss the discrete geodesic distances
$\calD^k_N$ on $\bfP_N$, their limit $\calD^k$ on $\bfP$, and their relation to
the Bhattacharya distance $\Bh$. For the latter, there are explicit expressions
for the distance and for the geodesic curves. This will be used in Section
\ref{se:LocContrGlobLip} to derive a local contraction/expansion estimate for
discrete solutions $t \mapsto p^N_j(t)$, $j=0,1$, lying in a sublevel
$\Sigma_N(E):=\bigset{p\in \bfP_N}{ \calE_N(p^N)\leq E}$:
\begin{align*}
& \forall \, E>\inf \calE\ \ \exists\, \Delta_E>0 \quad \text{such that}
\\
& \forall \, N \in \N \ \forall \, p^N_0(0),p_1^N(0) \in \Sigma_N(E) 
\text{ with } \calD^k(p_0^N(0),p_1^N(0))\leq \Delta_E:
\\[0.3em]
&\qquad\quad \calD^k_N\big( p^N_0(t), p^N_1(t) \big) 
    \leq \ee^{- \bfL^\mathrm{inf} (E{+}2)t} \,
\calD^k_N\big( p^N_0(0), p^N_1(0) \big) \quad \text{for all }t\geq 0. 
\end{align*}
We emphasize that the name ``contraction/expansion estimate'' relates to the
fact that there is a factor ``$1$'' in front of the exponential term, otherwise
it will be called a Lipschitz estimate. 

Again the stretching rate $-\bfL^\mathrm{inf}(E{+}2)$ is independent of $N$ and
with a similar estimate we can control the distance between solutions $p^N(t)$
and $p^M(t)$ and prove strong convergence $p^N(t)\to p(t)$, thus defining a
global semigroup $\big( S_t\big)_{t>0}$ on
$\dom(\calE):=\bigset{ p \in \bfP}{\calE(p)< \infty}$, see Theorem
\ref{th:SGLocContract}. We again have a local contraction/expansion estimate on
sublevels as above and, under a natural additional condition on $W$, we have a
global Lipschitz estimate such that for all energy levels $E> \inf \calE$ there
exists $\bfL^\mafo{glob}(E)>-\infty$ such that for
$p_0,p_1 \in \Sigma(E):=\bigset{ p \in \bfP}{ \calE(p)\leq E}$ we have
\[
 \Bh\big( S_t(p_0), S_t(p_1)  \big) \leq \sqrt{\ol\kappa /\ul\kappa }\:
\ee^{-\bfL^\mafo{glob}(E)t} \: \Bh\big( p_0, p_1 \big) \quad \text{for all }t\geq 0. 
\]

The final Section \ref{se:CharSemiflows} shows that the limit curves
$t \mapsto p(t)=S_t(p(0))$ are indeed weak solutions of the original equation
\ref{eq:I.BasicEq}. Moreover, Theorem \ref{th:CurvesMaxSlope} shows that the
solutions are \emph{curves of maximal slope} for the gradient system
$ \big( \bfP, \calE, \calR \big)$ in the sense of \cite{AmGiSa05GFMS}. Here we
do not need the distance $\calD^k$ but use the primal and dual dissipation
potentials $\calR$ and $\calR^*$, for which we establish the validity of a
chain rule and obtain the absolute continuity of $t \mapsto \calE(p(t))$ as
well as
\[
  \frac\rmd{\rmd t} \calE(p(t)) = -\calR\big(p(t),\dot p(t)\big) 
  - \calR^*\big(p(t), -\rmD\calE(p(t) \big) \quad \text{for a.a. } t>0.
\]
Moreover, we conclude the energy convergence $ \calE(p^N(t)) \to \calE(p(t))$
for $ N \to \infty $.

In Section \ref{su:SublevelEVI} we show that the solutions also satisfy the
so-called sublevel Evolutionary Variational Inequality (EVI). Based on the
results in \cite{MurSav20GFEV} we first do this for the discretized solutions
$p^N$ and then pass to the limit in a derivative-free version of the EVI by
using that the strong convergence $p^N(t)\to p(t)$ which implies
$\calD^k_N(p^N(t),\bbP_N q) \to \calD^k ( p(t), q)$ and the energy
convergence. However, the result is obtained under the assumption that the
global stretching rate $-\Lambda^\mafo{glob}_N(E)$ is uniformly bounded from
above by some $-\Lambda^\mafo{glob}(E)<\infty$ for all $ E>\inf \calE$. So far,
we are only able to show this for the Bhattacharya case $k(p)= \kappa p$, which
leads to the our last result stated in Corollary \ref{co:Bhatt.EVI}.

The results in this paper can be seen as a major extension of
\cite{MiOrSe14ANVM} in three directions: First, we show existence of
weak solutions and curves of maximal slope for a much larger class of models
because we allow quite general functions $p\mapsto k(p)$ instead of the
Bhattacharya case only. Secondly, we obtain local contraction/expansion
estimates in the intrinsic distance as well as global Lipschitz
estimates in the Bhattacharya distance. 
Thirdly, we derive a true EVI formulation on sublevels.  Our analysis is based
on spatial discretization whereas \cite{MiOrSe14ANVM} uses the minimizing
movement scheme, but both approaches have in common that strong convergence is
shown in a direct manner. This is important because the viscoelastic problem
under investigation does not have any compactness properties.

In the parallel work \cite[Chap.\,4]{Sum2026Thesis}, the case
$k(p) \leq k_0$ is treated based on a similar analysis using spatial
discretization. There, local stretching estimates in the vein of
Theorem~\ref{th:Stretching} are derived to demonstrate an EVI
representation. 

For more qualitative properties of the induced gradient flow, e.g.\
convergence of solutions into steady states, we refer to \cite{Pego92SGSC,
  BalSen15QNVG, Seng21NVSR, ParPeg25CNNG}.

\section{Evolution for step functions}
\label{se:EvolStep}

We consider piecewise constant approximations, which are in fact also exact
solutions if the function $G$ is replaced by its piecewise constant
interpolant. For $ N \in \N $ we define the step functions using the 
partition points $\tau^N_i:= i /N$ as follows:
\begin{equation}
  \label{eq:ApproxN}
  p^N(x) = \sum_{i=1}^N p^N_i \bbone_{{[\tau_{i-1}^N,\tau_i^N[}}(x) . 
\end{equation}
Moreover, we define the $(N{-}1)$-dimensional affine subsets $\bfP_N
\subset \bfP$ and the projection $\bbP_N:\bfP \to \bfP_N$ via 
\[
  \bfP_N:=\bigset{ p^N\in \bfP}{ p^N \text{ as in \eqref{eq:ApproxN}} } \quad
  \text{ and } \quad \bbP_N{\wt{p}} := p^N := \sum_{i=1}^N \wt{p}^N_i
  \bbone_{[\tau_{i-1}^N, \tau_i^N[}.
\]
with $\wt p^N_i = N \int_{\tau_{i-1}^N}^{\tau_i^N} \wt p(y)\dd y $. With this
we define the $(N{-}1)$-dimensional gradient system $(\bfP_N,\calE_N, \bbK_N )$
via
\begin{align*}
 &\calE_N:=\calE|_{\bfP_N} , \text{ namely }\calE_N(p^N) = \int_\Omega \big( W(p^N(x)) - G(x)p^N(x)    \big) \dd x,\\
& \bbK_N(p^N)\xi^N := \bbK(p^N)\xi^N \quad\text{for}\quad \xi^N= \sum_{i=1}^N
      \xi^N_i  \bbone_{{[(i-1)/N,i/N[}}.
\end{align*}
Note that $\calE_N$ does not explicitly depend on $N$, and so the energy
sublevels satisfy 
\begin{equation*}
  \begin{gathered}
    \Sigma(E) := \bigset{p\in \bfP}{ \calE(p)\leq E}, \\
    \Sigma_N(E) := \bigset{p^N\in \bfP_N}{ \calE_N(p^N)\leq E} 
    \ = \ \bfP_N \cap \Sigma(E).
  \end{gathered}
\end{equation*}
However, the derivative $\rmD\calE_N$ is the piecewise constant function
$\rmD\calE_N(p^N)= W'(p^N){-} G^N$ with $G^N:=\bbP_N G$ (to see this use
$\int_\Omega G p^N\dd x = \int_\Omega G^Np^N\dd x$). Hence, the discretized
gradient-flow equation is given by 
\begin{equation}
  \label{eq:DiscrODE}
  \dot p^N = - \bbK_N(p^N)\rmD\calE_N(p^N) = - \bbK_N(p^N)\big(W'(p^N)- G^N \big) 
\end{equation}
is an $(N {-} 1)$-dimensional ODE. Moreover, by construction we see that a solution
$t \mapsto p^N(t)$ of this ODE even solves the full equation
\eqref{eq:I.BasicEq} if we replace $G$ by $G^N = \bbP_N G$.

\subsection{Wellposedness of the ODE}

Our regularity assumptions on $k$ and $W$ are 
\begin{subequations}
  \label{eq:k.W.conds}
\begin{equation}
    \label{eq:k.W.smooth}
    k\in \rmC^0({[0,\infty[}, {[0,\infty[} ) \quad \text{and} \quad W\in 
\rmC^2({]0,\infty[} ,\R). 
\end{equation}
For $W$ we also impose the standard coercivity conditions
\begin{equation}
    \label{eq:W.coercive}
    W(p)\to \infty \text{ for } p \to \infty, \quad 
    W(p)\to \infty \text{ for } p \searrow 0,
\end{equation}
as well as a a technical assumption for showing well-preparedness of
initial conditions: 
\begin{equation}
  \label{eq:W.wellprepared}
  W = W_1 + W_2, \quad
  W_1 : \; ]0, \infty[ \; \to \bbR \text{ convex,} \quad 
  W_2 \in \rmC^0(]0, \infty[; \bbR) \text{ bounded.}
\end{equation}

The next two conditions relate to the geodesic $\Lambda$-convexity on
sublevels:
\begin{equation}
  \label{eq:Cond.lambda.cvx}
  \begin{aligned}
    \exists\: \lambda_W, &C_k, B_1,B_2,B_3 \in \bbR \ \ \forall\: p>0:
   \\
    &k(p)W''(p)+\frac{k'(p)}2 W'(p)\geq \lambda_W, \ \ |k'(p)|\leq C_k,
   \\
    &\big| k(p) W'(p) \big| \leq B_1 W(p) + B_2+ B_3(p{-}1).
  \end{aligned}
\end{equation}
\end{subequations}
\begin{equation}
  \label{eq:W.doubling}
  \begin{gathered}
    \exists \: C_1, C_2 \geq 0 \ \forall \: p_0, p_1 > 0 \ \forall\: p \in
    \bigl[\min\{p_0,p_1\}, 2 \max\{p_0,p_1\}\bigr] : \\
    W(p) \leq C_1 \bigl(W(p_0) + W(p_1)\bigr) + C_2.
  \end{gathered}
\end{equation}
Because of \eqref{eq:k(p)bounds}, the last condition in
\eqref{eq:Cond.lambda.cvx} can be seen as the one-dimensional version of the
celebrated multiplicative stress controls
$\bigl\lvert \pl_F W(F)F^\top \bigr\rvert \leq CW(F) {+} C^2$ which was
introduced in \cite[Eqn.\ (4)]{Ball84MELE}, see also
\cite{Ball02SOPE,KnZaMi10CGPM,MiRoSa18GERV} for further discussion and
applications. Assumption \eqref{eq:W.doubling} in particular provides a
sufficient control on the energy for the Bhattacharya case $k(p) = \kappa p$,
which will be used in Sections \ref{su:Lipschitz} and
\ref{su:SublevelEVI} only.\medskip 

The following lemma will be used to show global existence for the solutions
$p^N$. In contrast to most other places, the estimate here does depend
crucially on $N\in \N$. 

\begin{lemma}[Sublevel positivity for $\calE_N$]
\label{le:SublevPositive} 
Let $W: {]0,\infty[} \to \R$ be continuous and let the coercivity assumption
\eqref{eq:W.coercive} hold. Then, for all $N \in \bbN$ and $E > \inf \calE$,
there exists $\delta = \delta_{N,E} > 0$ such that for all $p^N \in \bfP_N$,
  \begin{equation*}
    \calE_N(p^N) \leq E \implies p^N(x) \geq \delta_{N,E} \quad \text{ for a.e. } x \in \Omega.
  \end{equation*}
\end{lemma}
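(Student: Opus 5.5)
The plan is to exploit that $p^N\in\bfP_N$ is determined by the finitely many values $p^N_1,\dots,p^N_N$ with $\frac1N\sum_i p^N_i=1$ and $p^N_i>0$. The energy then decomposes as
\[
\calE_N(p^N)=\frac1N\sum_{i=1}^N W(p^N_i)-\int_\Omega G\,p^N\dd x .
\]
So it suffices to bound each $W(p^N_i)$ from above in terms of $E$. Then the coercivity \eqref{eq:W.coercive} at $p\searrow0$ yields a lower bound on $p^N_i$.

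\textbf{Step 1: control the loading term.} Since $G\in\rmC^0(\ol\Omega)$ and $\int_\Omega p^N\dd x=1$ with $p^N>0$, we have
\[
\Bigl|\int_\Omega G p^N\dd x\Bigr|\le \|G\|_{\infty}.
\]

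\textbf{Step 2: lower bound for $W$.} By continuity of $W$ on ${]0,\infty[}$ and \eqref{eq:W.coercive}, $W$ is bounded below: $W\ge -M$ for some $M\ge0$. This holds because $W\to\infty$ at both ends, so $W$ attains its minimum on a compact interval $[a,b]$ outside of which $W$ exceeds $W(1)$.

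\textbf{Step 3: bound each $W(p^N_i)$.} If $\calE_N(p^N)\le E$, then for every fixed $j$,
\[
\frac1N W(p^N_j)\le E+\|G\|_\infty+\frac1N\sum_{i\ne j}\bigl(-W(p^N_i)\bigr)\le E+\|G\|_\infty+\frac{N-1}N M,
\]
hence
\[
W(p^N_j)\le K_{N,E}:=N\bigl(E+\|G\|_\infty\bigr)+(N-1)M .
\]

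\textbf{Step 4: conclude.} By \eqref{eq:W.coercive} there is $\delta_{N,E}>0$ such that $W(p)>K_{N,E}$ for all $p\in{]0,\delta_{N,E}[}$. Therefore $p^N_j\ge\delta_{N,E}$ for all $j$, which gives $p^N(x)\ge\delta_{N,E}$ for a.e.\ $x$.

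The only real obstacle is the factor $N$ in Step 3. A single small cell carries only weight $1/N$ in the energy, so $\delta$ necessarily depends on $N$, in line with the remark preceding the lemma. The assumption $E>\inf\calE$ is needed only so that the sublevel is nonempty; it plays no role in the estimate.
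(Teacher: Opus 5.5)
Your proof is correct and follows the same route as the paper. Both arguments bound the average $\frac1N\sum_i W(p^N_i)$ by $E$ plus a loading term, multiply by $N$ to bound each $W(p^N_i)$, and then use coercivity of $W$ at $0$. Your Step~2, which uses $\inf W>-\infty$ to control the other cells, is slightly more careful than the paper: the paper passes directly from $\frac1N\sum_i W(p^N_i)\le E'$ to $W(p^N_i)\le NE'$, and that step tacitly requires $W\ge 0$ or a shift of $W$ by a constant.
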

\begin{proof}
By hypothesis on $W$, for all $E > \inf\calE $, there exists $\eta_{E} > 0$ such
that $W(p) \leq E$ implies $\eta_{E} \leq p \leq \eta_{E}^{-1}$.

Since $G$ is continuous on $\overline{\Omega}$, we have $\sup{G} < \infty$. Let
$E > \inf{\calE}$, and define $E' := \max\{E, E {+} \sup{G}\}$. Let $N \in \bbN$
and $p^N \in \bfP_N$ be such that $\calE_N(p^N) \leq E$. Then we have
\begin{equation*}
  \begin{aligned}
    \frac{1}{N}\sum_{i=1}^N W(p^N_i) &\leq \frac{1}{N} \sum_{i=1}^N \Bigl(
    W(p^N_i) + (\sup{G} - G^N_i) p^N_i
    \Bigr) \\
    &= \calE_N(p^N) + \sup{G} \leq E'.
  \end{aligned}
\end{equation*}
This implies $W(p^N_i) \leq NE'$ for all $i = 1, \dots, N$. Thus, with
$\eta_{E}$ as in the first paragraph of the proof, we may take
$\delta_{N,E} := \eta_{NE'}$ to conclude.
\end{proof}

With this we are able to conclude that solutions $p^N$ starting with finite
energy are unique and exist globally.

\begin{proposition}[Global existence and uniqueness for fixed $N$]
\label{pr:ODE.GlobExi} 
Let assumptions \eqref{eq:k.W.smooth} and \eqref{eq:W.coercive} hold. Then,
for each $p_0^N\in \dom(\calE_N) :=\bigset{ p^N\in \bfP_N }{
  \calE_N(p^N)<\infty} $ there exists a unique
global solution $p^N: {[0,\infty[} \to \bfP_N$ to
\eqref{eq:DiscrODE} with $p^N(0)=p_0^N$. This defines a global semiflow
$\big(S^N_t)_{t\geq 0}$ on $\dom(\calE)$ which satisfies $\calE_N
\big( S^N_t(p^N_0) \big) \leq \calE_N(p_0^N)$ for all $t>0$ and $ p_0^N \in
\dom(\calE_N) $. 
\end{proposition}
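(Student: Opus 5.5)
We treat \eqref{eq:DiscrODE} as an ODE in the coordinates $(p^N_1,\dots,p^N_N)$ on the open set $U:=\{p\in{]0,\infty[}^N : \frac1N\sum_i p_i=1\}$. The right-hand side is $F_i(p)=-k(p_i)\bigl(\xi_i - \frac{\sum_j k(p_j)\xi_j}{\sum_j k(p_j)}\bigr)$ with $\xi_i=W'(p_i)-G^N_i$. Here $\sum_j k(p_j)\ge \ul\kappa\sum_j p_j>0$ on $U$ by \eqref{eq:k(p)bounds}. Moreover $\frac1N\sum_i F_i=0$, so the affine constraint is preserved.

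\textbf{Local existence and uniqueness.} The vector field $F$ is continuous on $U$, which already gives local existence by Peano. For uniqueness we need local Lipschitz continuity. Since $W\in\rmC^2$, the map $p\mapsto W'(p_i)$ is locally Lipschitz on $U$. Assumption \eqref{eq:k.W.smooth}, however, only gives $k\in\rmC^0$. We therefore also invoke the bound $|k'|\le C_k$ from \eqref{eq:Cond.lambda.cvx}, which is implicitly used (the same condition is needed to write the stretching formulas). With it, $k$ is Lipschitz, so $F$ is locally Lipschitz on $U$ and Picard--Lindelöf applies. If one insists on only \eqref{eq:k.W.smooth} and \eqref{eq:W.coercive}, the fallback is Peano existence, with uniqueness being the delicate point. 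We commit to using Lipschitz $k$.

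\textbf{Energy decay.} Along a solution, $\frac{\rmd}{\rmd t}\calE_N(p^N)=\langle \rmD\calE_N(p^N),\dot p^N\rangle = -\langle \xi,\bbK(p^N)\xi\rangle = -2\calR^*(p^N,\xi)\le 0$ by \eqref{eq:def.calR*}. The quadratic form is nonnegative by the Cauchy--Schwarz (Jensen) inequality $[k\xi]^2\le [k][k\xi^2]$. Hence $\calE_N(p^N(t))\le \calE_N(p^N_0)=:E$ on the maximal existence interval.

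\textbf{Globality.} This is the main point. By Lemma~\ref{le:SublevPositive}, the solution stays in $\{p_i\ge\delta_{N,E}\}$. Combined with $\frac1N\sum p_i=1$, this gives $p_i\le N$. So the trajectory remains in the compact set $K=U\cap[\delta_{N,E},N]^N\subset U$. On $K$, $F$ is bounded and Lipschitz, so the standard blow-up alternative (a maximal solution must leave every compact subset of $U$) yields $T_{\max}=\infty$.

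Uniqueness then defines the semiflow $S^N_t$ on $\dom(\calE_N)$ with the semigroup property and the stated energy inequality. The only potentially subtle step is the regularity of $k$ needed for uniqueness; the globality argument is straightforward thanks to Lemma~\ref{le:SublevPositive}.
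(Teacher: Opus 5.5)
Your proof is correct and follows essentially the same route as the paper: local well-posedness while all $p^N_i>0$, monotonicity of $\calE_N$ along the gradient flow, and Lemma~\ref{le:SublevPositive} (plus the mass constraint giving $p^N_i\le N$) to confine the trajectory to a compact subset, which rules out finite-time breakdown. Your caveat about uniqueness is well placed: \eqref{eq:k.W.smooth} only gives $k\in\rmC^0$, so the paper's assertion of a $\rmC^1$ right-hand side tacitly uses the same extra regularity of $k$ (e.g.\ $|k'|\le C_k$ from \eqref{eq:Cond.lambda.cvx}) that you invoke explicitly.
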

\begin{proof}
From \eqref{eq:k.W.smooth} we see that the ODE for $p^N \in \bfP_N $ has a
$\rmC^1$ right-hand side as long as $p^N_i > 0$ for all $i$. Thus for solutions
starting with $  E_0:= \calE_N(p^N(0))<\infty$ we have a  unique local solution
${[0,t_*[} \ni  t\mapsto p^N(t) $ that  exists as long as  $p^N_i(t)>0$ for all
$i$.

However, along the local solutions, the discrete energy $\calE_N$ is a Lyapunov
function, because we have a gradient-flow equation. Thus, using
$\calE_N(p^N(t)) \leq \calE_N(p^N(0) ) = E_0 < \infty$, sublevel positivity
from Lemma~\ref{le:SublevPositive} gives $p^N_i(t)\geq \delta_{N,E_0} $ for all
$i$ and all $t\in {[0,t_*[}$. Hence, the solution must exist globally.

Uniqueness follows from local Lipschitz continuity. 
\end{proof}

Later on, we will construct solutions $p$ for the full problem
\eqref{eq:I.BasicEq} via approximations $p^N$ where we choose $p^N(0)=\bbP_N
p(0)$. In this process, it will be useful to control the initial energy as
well in the sense of well-prepared initial conditions, see e.g.\
\cite[Eqn.\,(5)]{Serf11GCGF} or \cite[Eqn.\,(5.14)]{Miel23IAGS}. 

\begin{lemma}[Convergence of energies]
\label{le:WellPrepIC}
If \eqref{eq:W.wellprepared} holds, then, for all $p \in \bfP$, we have
\begin{equation*}
    \calE(p) < \infty \quad \Longrightarrow \quad 
        \lim_{N \to \infty} \calE_N(\bbP_N{p}) = \calE(p).
\end{equation*}
\end{lemma}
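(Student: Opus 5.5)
We prove $\lim_{N\to\infty}\calE_N(\bbP_N p)=\calE(p)$ by splitting the energy into its three parts: the loading term, the convex part $W_1$, and the bounded part $W_2$. For each we show the correct limit as $N\to\infty$, using $L^1$-convergence of the projections $\bbP_N p$.

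First, recall that $\bbP_N$ is the conditional expectation onto the $\sigma$-algebra generated by the uniform partition. Hence $\bbP_N p\to p$ in $\rmL^1(\Omega)$ for every $p\in\rmL^1(\Omega)$: this holds for continuous functions by uniform continuity, and extends to $\rmL^1$ by density together with the contraction property $\|\bbP_N f\|_{\rmL^1}\le\|f\|_{\rmL^1}$. Along any subsequence we may also extract a further subsequence with $\bbP_N p\to p$ a.e.

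\emph{Loading term.} Since $G\in\rmC^0(\ol\Omega)$ is bounded, $L^1$-convergence gives $\int_\Omega G\,\bbP_N p\dd x\to\int_\Omega Gp\dd x$.

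\emph{Convex part $W_1$.}
\begin{itemize}
\item Upper bound: Jensen's inequality on each cell $[\tau^N_{i-1},\tau^N_i[$ gives $W_1\bigl((\bbP_N p)_i\bigr)\le N\int_{\text{cell}}W_1(p)\dd y$. Summing over cells yields $\int W_1(\bbP_N p)\le\int W_1(p)$.
\item Integrability of $W_1(p)$: this is finite, since $\calE(p)<\infty$, $W_2$ is bounded and $Gp$ is integrable.
\item Well-definedness: $W_1$ is bounded below by an affine function, so $\int W_1(p)$ makes sense in $]-\infty,\infty]$. Also $(\bbP_N p)_i>0$, because $p>0$ a.e.
\item Lower bound: by Fatou's lemma, applied along the a.e.-convergent subsequence to $W_1(\bbP_N p)-a-b\,\bbP_N p\ge0$ (with $a+bq\le W_1(q)$ affine, and using $L^1$-convergence for the affine part), together with lower semicontinuity of the convex function $W_1$ on $]0,\infty[$ extended by its limit at $0$, we get $\liminf\int W_1(\bbP_N p)\ge\int W_1(p)$.
\end{itemize}
Combining the two bounds, and noting that the subsequence was arbitrary, $\int W_1(\bbP_N p)\to\int W_1(p)$ for the full sequence.

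\emph{Bounded part $W_2$.} Since $W_2$ is continuous and bounded and $\bbP_N p\to p$ a.e. along subsequences with $p>0$ a.e., dominated convergence gives $\int W_2(\bbP_N p)\to\int W_2(p)$ along subsequences. The subsequence principle then gives convergence of the full sequence.

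Adding the three limits yields the claim.

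The main obstacle is the $W_1$ part. The difficulty is that $W_1$ may blow up at $0$, so its lower semicontinuity and the behaviour where $p$ is small need care. The upper bound comes cleanly from Jensen. For the lower bound, the plan is Fatou's lemma applied to $W_1$ minus an affine minorant, plus pointwise continuity of $W_1$ at points where $p(x)>0$. This works because the limit point $p(x)$ lies in the open interval $]0,\infty[$ for a.e. $x$, so continuity of $W_1$ suffices and no boundary behaviour at $0$ is needed.
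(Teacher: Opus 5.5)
Your proof is correct and follows essentially the same route as the paper. You split off $W_1$, $W_2$ and the loading term, use Jensen on each cell for the upper bound on the $W_1$ part, use lower semicontinuity under $\rmL^1$ convergence for the matching lower bound, and apply dominated convergence to $W_2$. The paper simply asserts the lower bound and full-sequence a.e.\ convergence of $\bbP_N p$, whereas your Fatou-with-affine-minorant argument and subsequence extraction spell out those steps explicitly.
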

\begin{proof}
Let $W_1, W_2 : \; ]0, \infty[ \; \to \bbR$ be as in \eqref{eq:W.wellprepared}. 
Then, convexity of $W_1$ and Jensen's inequality imply 
$\int_\Omega W_1(\bbP_N p) \dd x \leq \int_\Omega W_1(p)\dd x$. Moreover,
$\bbP_N p \to p$ in $\rmL^1(\Omega)$ implies $\liminf_{N\to \infty} \int_\Omega
W_1(\bbP_N p) \geq  \int_\Omega W_1(p)\dd x$. Thus, $\int_\Omega W_1(\bbP_N p) 
\dd x \to  \int_\Omega W_1(p)\dd x$ as $N\to \infty$. 

Since  $W_2$ is continuous and $\bbP_N{p} \to p$ a.e., we have 
$W_2(\bbP_N p) \to W_2(p)$ a.e., and the convergence $\int_\Omega W_2(\bbP_N p)
\dd x \to  \int_\Omega W_1(p)\dd x$ follows by the dominated convergence theorem. 

With $G \in \rmL^\infty(\Omega)$, we obtain  that
\begin{equation*}
  \begin{aligned}
    \lim_{N \to \infty} \calE_N(\bbP_N{p})
      &= \lim_{N \to \infty} \int_\Omega 
        \bigl( W_1(\bbP_N{p}) + W_2(\bbP_N{p}) - G \bbP_N{p} \bigr) \dd x
     \\
      &= \int_\Omega \bigl( W_1(p) + W_2(p) - G p \bigr) \dd x 
       \ = \ \calE(p),
  \end{aligned}
\end{equation*}
as required.
\end{proof}

\subsection{Infinitesimal stretching of the metric along solutions}
\label{su:InfinitesimalStretch}

By the classical ODE theory and the smoothness from \eqref{eq:k.W.smooth}, we
can differentiate with respect to the initial condition and a change in the
loading $G^N$. Denoting by
\[
  y^N \in \bfT_N:= \biggl\{
    y^N = \sum_{i=1}^n  y^N_i \bbone_{{[ \tau^N_{i-1}, \tau^N_i [}} \biggm|
    \int_\Omega y^N(x) \, \rmd{x} =0
  \biggr\} \subset \rmL^1(\Omega) 
\]
the tangential directions at any $p^N \in \bfP_N$ we obtain 
the coupled ODE system
\begin{subequations}
  \label{eq:ODE.Tang}
  \begin{align}
    \dot p^N(t) &= \bm V_N(p^N(t)), \\
    \dot y^N(t) &=  \rmD \bm V_N(p^N(t)) y^N(t) + \bbK_N(p^N(t)) \zeta^N,
  \end{align}
\end{subequations}
where the vector field $\bm{V}_N : \bfP_N \to \bfT_N$ is given by
$ \bm{V}_N(p^N) := -\bbK_N(p^N)\rmD\calE_N(p^N) $ and $\zeta^N \in \bfT_N^*$
denotes a variation in $G^N$ needed to compare solutions between systems with
different $N$, e.g.\ for $N=2M$ one can use $\zeta^N = G^{2M}-G^M$. Here,
$\bfT_N^*$ denotes the dual space of the vector space $\bfT_N$, which we can
naturally identify with arbitrary step functions
$\zeta^N = \sum_{i=1}^N \zeta^N_i {\bf1}_{[\tau^N_{i-1}, \tau^N_i[}$ via the
$\rmL^2$ inner product.

The length of the vectors $y^N$ is measured by the metric induced by $k$, namely
\[
  \calR_N(p^N,y^N) := \frac12 \langle \bbG_N(p^N) y^N, y^N\rangle
  = \frac12 \sum_{i=1}^N  \frac1N\, \frac{(y^N_i)^2}{k(p^N_i)}. 
\]
Note that $\bbK(p^N)\bbG(p^N)y^N = y^N$ for all
$(p^N, y^N) \in \bfP_N \times \bfT_N$. In the finite-dimensional context, the
quadratic form $\calR^*$ becomes
\begin{equation*}
  \calR^*_N(p^N, \xi^N)
  := \frac{1}{2} \langle \xi^N, \bbK_N(p^N)\xi^N \rangle 
  = \frac{1}{2} \sum_{i=1}^N \frac{1}{N} k(p_i^N) \biggl(
    \xi^N_i - \frac{[k_{p^N} \xi^N]}{[k_{p^N}]}
  \biggr)^2
\end{equation*}
The rate of change of $\calR_N$ or $\calR_N^*$ along solutions of
\eqref{eq:ODE.Tang} is given through the second derivative of $\calE_N$, namely
the differential geometric Hessian $ \wt\bbH_N:\bfT_N\to \bfT_N^* $. This also
involves the derivative of the metric $\bbG_N$ or the Onsager operator
$\bbK_N$. We call $\bbH_N = \bbK_N \wt\bbH_N \bbK_N$ the contravariant Hessian
of $\calE_N$ with respect to the metric $\bbG_N$.

\begin{theorem}[Stretching of the tangent vectors]
\label{th:Stretching}
Let $(p^N, y^N) : {[0,\infty[} \to \bfP_N \ti \bfT_N $ be a solution
of \eqref{eq:ODE.Tang}. Then we have the relation
\begin{align}
\nonumber
  \frac\rmd{\rmd t} \calR_N  (p^N(t),y^N(t)) 
  &=  -  \bigl\langle  \wt\bbH_N(p^N(t))y^N(t),y^N(t) \bigr\rangle 
         + \langle y^N(t),\zeta^N  \rangle \\
\nonumber
  &=  - \bigl\langle   \bbH_N(p^N(t))\bbG_N(p^N(t))y^N(t), 
         \bbG_N(p^N(t))y^N(t)\bigr\rangle 
          + \langle y^N(t),\zeta^N \rangle  \\
\label{eq:StretchRelat}
  &  = \frac{\rmd}{\rmd{t}} \calR_N^*\bigl(p^N(t), \bbG_N(p^N(t))y^N(t)\bigr), 
\\
\nonumber
 \text{where } &\big\langle \xi, \bbH_N(p) \xi \big\rangle =
  \int_\Omega H_N[p](x) \biggl( \xi - \frac{[k_p\xi]}{[k_p]} \biggr)^2 \dd x 
  \\
\nonumber
  \text{and } H_N[p] &= k(p) \biggl(
  k(p)W''(p) 
  + \frac12 k'(p) \biggl(
  W'(p) - G^N - \frac{[k_p(W' {-} G^N)]}{[k_p]}
  \biggr)  
  \biggr).
  \end{align}
\end{theorem}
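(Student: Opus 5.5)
The plan is to work with the dual variable $\eta := \bbG_N(p^N)y^N$, i.e.\ $\eta_i = y^N_i/k(p^N_i)$, and compute its evolution directly. Since $\bbK_N(p)\bbG_N(p)y = y$ for $y\in\bfT_N$, we have $\calR_N(p,y) = \frac12\langle \eta, y\rangle = \frac12\langle \eta,\bbK_N(p)\eta\rangle = \calR_N^*(p,\eta)$. The two quantities coincide pointwise in time, and the last equality in \eqref{eq:StretchRelat} is then immediate. The work therefore consists of differentiating $t\mapsto \frac12\sum_i \frac1N (y_i^N)^2/k(p_i^N)$ and identifying the result with the stated Hessian expression.

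First I compute $\rmD\bm V_N(p)y$ explicitly. Write $\xi := W'(p)-G^N$ and $m(p):=[k_p\xi]/[k_p]$, so that $\bm V_N(p) = -k_p(\xi - m(p))$. Differentiating in direction $y$ gives
\[
\rmD\bm V_N(p)y = -k'_p y\,(\xi-m) - k_p\bigl(W''(p)y - \delta m\bigr),
\]
where $\delta m\in\R$ is a constant, namely the derivative of $m$. Its exact form is irrelevant because $\langle \eta, k_p c\rangle = c\,[y]=0$ for constants $c$. Then
\[
\frac{\rmd}{\rmd t}\calR_N = \langle \eta,\dot y\rangle - \frac12\int \frac{k'_p\,\dot p}{k_p^2}\,y^2\dd x.
\]
Here I insert $\dot y = \rmD\bm V_N(p)y + \bbK_N(p)\zeta^N$ and $\dot p = -k_p(\xi-m)$. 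The forcing term gives $\langle \eta,\bbK_N\zeta^N\rangle = \langle \bbK_N\eta,\zeta^N\rangle = \langle y,\zeta^N\rangle$ by self-adjointness. The remaining terms combine to
\[
-\int\Bigl(W''(p)+\tfrac{k'_p}{2k_p}(\xi-m)\Bigr) y^2\dd x
= -\int H_N[p]\,\eta^2\dd x .
\]
The contributions $-k'_p(\xi-m)y^2/k_p$ and $+\frac12 k'_p(\xi-m)y^2/k_p$ add up to the factor $\frac12$ in $H_N$, and $y^2 = k_p^2\eta^2$ produces the prefactor $k(p)$ in $H_N$.

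To match the stated form I note that $\eta - [k_p\eta]/[k_p] = \eta$, because $[k_p\eta]=[y]=0$. Hence $\int H_N\eta^2 = \langle \eta,\bbH_N(p)\eta\rangle$, which gives the second line of \eqref{eq:StretchRelat}. The first line follows by defining $\wt\bbH_N := \bbG_N\bbH_N\bbG_N$ on $\bfT_N$, so that $\bbH_N=\bbK_N\wt\bbH_N\bbK_N$ on the relevant range. This is consistent with the paper's definition, since $\bbK_N\bbG_N=\id$ on $\bfT_N$. One could alternatively check that this is the covariant Hessian $\rmD^2\calE_N - \Gamma(\rmD\calE_N)$; I take it as the definition of the contravariant Hessian.

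The main obstacle is bookkeeping: correctly accounting for the derivative of the nonlocal mean $m(p)$ and of $k$ in both $\dot y$ and $\calR_N$. The key simplification I rely on is that every constant (in $x$) term is annihilated by pairing with $\eta$, because $[k_p\eta]=0$.
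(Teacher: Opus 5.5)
Your proof is correct, but you organize the computation differently from the paper. The paper differentiates the dual form $\calR_N^*(p^N,\xi^N)=\frac12\langle\xi^N,\bbK_N(p^N)\xi^N\rangle$ with $\xi^N=\bbG_N(p^N)y^N$. It obtains $\bbK_N\dot\xi^N$ from $\frac{\rmd}{\rmd t}\bigl(\bbK_N\bbG_N\bigr)=0$. It then expands $\langle\xi,\rmD\bm V_N\bbK_N\xi\rangle$ and $\langle\xi,(\rmD\bbK_N\bm V_N)\xi\rangle$ for a \emph{general} covector $\xi$, carrying all nonlocal terms such as $[k\xi]$, $[k\wt W']$, $[k'k\xi]$, and checks that they recombine into $(\xi-[k\xi]/[k])^2$.

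You instead differentiate the primal form $\frac12\int (y^N)^2/k(p^N)\,\rmd x$, where the metric acts by pointwise multiplication. You then use $[y^N]=[k_p\eta]=0$ to kill the derivative of the mean $m(p)$ and every other $x$-independent term at once. This is much shorter and makes the origin of the factor $\frac12$ in $H_N$ transparent.

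What the paper's longer expansion buys is an algebraic identity valid for all $\xi\in\bfT_N^*$. It thus directly gives the formula for $\langle\xi,\bbH_N\xi\rangle$ on all covectors, as used in Lemma~\ref{le:LambdaIf.LowBd}. Your computation only sees covectors with $[k_p\xi]=0$. Nothing is lost, however: both the stated formula and $\bbK_N\wt\bbH_N\bbK_N$ are invariant under $\xi\mapsto\xi+c$, since $\bbK_N c=0$. Moreover, $\{\bbG_N y : y\in\bfT_N\}$ is exactly the set of representatives with $[k_p\xi]=0$. You essentially note this when identifying $\wt\bbH_N$.
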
 
\begin{proof}
Define $\xi^N(t) := \bbG_N(p^N(t)) y^N(t)$ for $t > 0$. Then
\begin{equation*}
  \dot{\xi}^N 
  = \bigl(\rmD\bbG_N(p^N)\dot{p}^N\bigr)y^N + \bbG_N(p^N)\dot{y}^N.
\end{equation*}
Using the property that $\bbK_N(p^N)\bbG_N(p^N) = \id_{\bfT_N}$ and
\begin{equation*}
  0 
  = \frac{\rmd}{\rmd{t}}\bbK_N(p^N) \bbG_N(p^N)
  = \bigl(\rmD\bbK_N(p^N)\dot{p}^N \bigr) \bbG_N(p^N)
     + \bbK_N(p^N) \bigl( \rmD\bbG_N(p^N)\dot{p}^N\bigr),
\end{equation*}
we therefore have
\begin{equation*}
  \begin{aligned}
    \bbK_N(p^N)\dot{\xi}^N
    &= -\bigl( \rmD\bbK_N(p^N) \dot{p}^N \bigr) \bbG_N(p^N) y^N + \dot{y}^N \\
    &= -\bigl( \rmD\bbK_N(p^N) \bm{V}_N(p^N) \bigr) \xi^N + \rmD\bm{V}_N(p^N)
    \bbK_N(p^N)\xi^N  +  \bbK_N(p^N) \zeta^N.
  \end{aligned}
\end{equation*}
Differentiating $\calR_N^*(p^N, \xi^N)$ then gives us
\begin{equation*}
 \begin{aligned}
  &\frac{\rmd}{\rmd{t}}\calR^*_N(p^N, \xi^N)
   = \frac{\rmd}{\rmd{t}}\frac{1}{2}\langle \xi^N, \bbK_N(p^N)\xi^N \rangle 
    %% \\ &
   = \frac{1}{2} \bigl\langle \xi^N, \bigl(\rmD\bbK_N(p^N)\dot{p}^N\bigr)
   \xi^N \bigr\rangle + \langle \xi^N, \bbK_N(p^N)\dot{\xi}^N \rangle \\
   &\quad = -\frac{1}{2} \bigl\langle \xi^N,
   \bigl(\rmD\bbK_N(p^N)\bm{V}_N(p^N)\bigr)\xi^N \bigr\rangle + \bigl\langle
   \xi^N, \rmD\bm{V}_N(p^N)\bbK_N(p^N)\xi^N \bigr\rangle + \bigl\langle
   \xi^N, \bbK_N(p^N) \zeta^N \bigr\rangle.
 \end{aligned}
\end{equation*}

To calculate the quadratic form in
$\xi^N$ on the right hand side of the above equation, we consider the
case $\zeta^N =0$ first. We abbreviate our notation by taking $\xi =
\xi^N$, dropping the explicit dependence on
$p^N$, and using the averaging notation
$[\cdot]$ extensively. Moreover, we incorporate the loading
$G^N$ into the energy density $W$ by defining
  \begin{equation*}
    \widetilde{W}(x, p) := W(p) - G^N(x)p.
  \end{equation*}
  Derivatives $\wt{W}'$, $\wt{W}''$ are taken with respect to $p$.

  Firstly, we have
  \begin{equation*}
    \begin{aligned}
      \left\langle \xi^N, \rmD{\bm{V}_N}(p^N)\xi^N \right\rangle 
      &= -\biggl[
        k\xi\biggl(
          \xi - \frac{[k\xi]}{[k]}
        \biggr) \biggl(
          k' \biggl(
            \wt{W}' - \frac{[k\wt{W}']}{[k]}
          \biggr) + k\wt{W}''
        \biggr)
      \biggr] \\
      &\hspace{-2em} + \frac{[k\xi]}{[k]} \biggl(
        \biggl[
          (k'\wt{W}' + k\wt{W}'') k \biggl(
            \xi - \frac{[k\xi]}{[k]}
          \biggr)
        \biggr]
        - \frac{[k\wt{W}']}{[k]} \biggl[
          k'k \biggl(
            \xi - \frac{[k\xi]}{[k]}
          \biggr)
        \biggr]
      \biggr) \\
      &= -[k^2\xi^2\wt{W}''] + 2 \frac{[k\xi]}{[k]}[k^2\xi\wt{W}''] - \frac{[k\xi]^2}{[k]^2}[k^2\wt{W}''] \\
      &\hspace{1em} - [k'k\xi^2\wt{W}'] + \frac{[k'k\xi^2]}{[k]}[k\wt{W}']  + 2\frac{[k\xi]}{[k]}[k'k\xi\wt{W}'] 
        - 2\frac{[k'k\xi][k\xi]}{[k]^2}[k\wt{W}'] \\
      &\hspace{1em} - \frac{[k\xi]^2}{[k]^2}[k'k\wt{W}'] + \frac{[k'k][k\xi]^2}{[k]^3}[k\wt{W}'] \\
      &= - \int_\Omega k \biggl(
        k\wt{W}'' + k' \biggl(
          \wt{W}' - \frac{[k\wt{W}']}{[k]}
        \biggr) 
      \biggr) \biggl(
        \xi - \frac{[k\xi]}{[k]}
      \biggr)^2 \, \rmd{x}.
    \end{aligned}
  \end{equation*}

  Secondly, we have
  \begin{equation*}
    \begin{aligned}
      \bigl\langle \xi^N, \bigl(\rmD\bbK_N(p^N)\bm{V}_N(p^N)\bigr)\xi^N \bigr\rangle
      &= \biggl[ -k'k \biggl(
          \wt{W}' - \frac{[k\wt{W}']}{[k]}
        \biggr)\biggl(
          \xi - \frac{[k\xi]}{[k]}
        \biggr) \xi
      \biggr] \\
      &\hspace{-2em} + \frac{[k\xi]}{[k]} \biggl(
        \biggl[
          k'k \biggl(
            \wt{W}' - \frac{[k\wt{W}']}{[k]}
          \biggr) \xi
        \biggr]
        - \frac{[k\xi]}{[k]} \biggl[
          k'k \biggl(
            \wt{W}' - \frac{[k\wt{W}']}{[k]}
          \biggr)
        \biggr]
      \biggr) \\
      &= - \int_\Omega k'k \biggl(
        \wt{W}' - \frac{[k\wt{W}']}{[k]}
      \biggr) \biggl(
        \xi - \frac{[k\xi]}{[k]}
      \biggr)^2 \, \rmd{x}.
    \end{aligned}
  \end{equation*}

Noting $\wt{W}'' = W''$, it follows that 
\begin{multline*}
  -\frac{1}{2}\bigl\langle \xi^N, \bigl(\rmD\bbK_N(p^N)\bm{V}_N(p^N)\bigr) 
        \xi^2 \bigr\rangle
   + \langle \xi^N, \rmD\bm{V}_N(p^N)\bbK_N(p^N)\xi^N \rangle \\
 = -\int_\Omega H_N[p^N](x) \biggl(
     \xi^N - \frac{[k_{p^N}\xi^N]}{[k_{p^N}]}
    \biggr) \, \rmd{x},
\end{multline*}
as required. Reinserting $ \langle \xi^N, \bbK_N(p^N) \zeta^N \rangle 
= \langle \zeta, y^N \rangle $ the desired stretching relation
\eqref{eq:StretchRelat} holds, and Theorem \ref{th:Stretching} is established.
\end{proof}

We can now define the infinitesimal stretching rate on the energy sublevels
\[ 
  \Sigma_N(E):= \bigset{p^N \in \bfP_N }{ \calE_N(p^N) \leq E}.
\]

\begin{definition}[Infinitesimal sublevel stretching rate]
\label{de:LocSublevSR} 
Given $E> \inf \calE_N$ we define 
\begin{multline*}
  \qquad\Lambda_N^\mafo{inf}(E) := 
  \sup\Bigset{ \Lambda \in \R }{ 
    \forall \, p^N \in \Sigma_N(E), \ \forall \xi^N \in \bfT_N: \\
    \langle \xi^N,\bbH_N(p^N) \xi^N\rangle \geq \Lambda \langle \xi^N,
    \bbK_N(p^N) \xi^N \rangle 
  }.  \qquad
\end{multline*}
\end{definition}

We now show that under suitable assumptions we always have
$\Lambda_N^\mafo{inf}(E) > -\infty$. For this, for an arbitrary
$p^N \in \bfP_N$ we first estimate the the optimal $\wh\Lambda(p^N)$ in the
positive semi-definiteness estimate
$\bbH_N(p^N) - \wh\Lambda(p^N)\,\bbK_N(p^N) \geq 0$.

\begin{lemma}[Lower bound for Hessian] 
\label{le:LambdaIf.LowBd}
For $p^N\in \bfP_N$ we have the lower bound 
\begin{align*} 
   &\big\langle \xi, \bbH_N(p^N) \xi \big \rangle \
      \geq \ \wh\Lambda(p^N)\,  \big\langle \xi, \bbK_N(p^N)\xi \big\rangle 
      \  \text{ for all } \xi \in \bfT_N^*, 
      \; \text{ where }
  \\[0.4em]
   &  \wh\Lambda(p^N) := \inf_{ \genfrac{}{}{0pt}{2}{q>0}{x\in \Omega}} 
      \biggl( k(q)W''(q) + \frac{k'(q)}2
        \bigl(W'(q) {-} G^N(x)\bigr)
      \biggr) 
      - \frac{[k_{p^N}(W'(p^N) {-} G^N)]}{2\,[k_{p^N}]}\,
      \sup_{q>0} k'(q).
\end{align*}
\end{lemma}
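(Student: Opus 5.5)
The plan is to start from the explicit formula in Theorem~\ref{th:Stretching}, namely $\langle \xi, \bbH_N(p^N)\xi\rangle = \int_\Omega H_N[p^N](x)\,\bigl(\xi - [k_{p^N}\xi]/[k_{p^N}]\bigr)^2\dd x$. We compare it pointwise with the representation $\langle \xi,\bbK_N(p^N)\xi\rangle = \int_\Omega k_{p^N}\bigl(\xi - [k_{p^N}\xi]/[k_{p^N}]\bigr)^2\dd x$ from \eqref{eq:def.calR*}. Both integrals carry the same nonnegative weight $\bigl(\xi - [k\xi]/[k]\bigr)^2$. It therefore suffices to show the pointwise inequality $H_N[p^N](x) \geq \wh\Lambda(p^N)\, k(p^N(x))$ for a.e.\ $x\in\Omega$. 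Multiplying this by the weight and integrating gives the claim.

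Since $H_N[p^N] = k(p^N)\cdot(\cdots)$ with $k(p^N)\geq 0$, the pointwise inequality reduces to a lower bound on the bracket. Writing $p=p^N(x)$ and $m:=[k_{p^N}(W'(p^N){-}G^N)]/[k_{p^N}]$ (a constant, independent of $x$), we need
\[
k(p)W''(p) + \tfrac12 k'(p)\bigl(W'(p) - G^N(x)\bigr) - \tfrac12 k'(p)\, m \;\geq\; \wh\Lambda(p^N).
\]
The first two terms are bounded below by the infimum over $q>0$ and $x\in\Omega$ in the definition of $\wh\Lambda$, simply by taking $q=p^N(x)$ and the same $x$.

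The remaining term $-\tfrac12 k'(p)\,m$ needs a sign argument, and this is the only delicate step. If $m\geq 0$, then $-\tfrac12 k'(p)m \geq -\tfrac12 m\sup_{q>0}k'(q)$, which is exactly the second term of $\wh\Lambda$. If $m<0$, the stated bound would instead require $\inf k'$. So I would check whether the paper implicitly uses $k'\geq 0$ or intends $\sup$ to be read accordingly. In the case $m<0$, $-\tfrac12 k'(p)m = \tfrac12|m|k'(p)$, which is $\geq 0$ whenever $k'\geq0$. Since $k(0)=0$ and $k\geq \ul\kappa p$, $k$ is naturally increasing, and $\sup k'\geq \ul\kappa>0$ by the mean value theorem. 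Hence $-\tfrac12 m\sup k'\geq 0$ and is a larger quantity, so a careful check is needed. If monotonicity of $k$ is not available, I would interpret the bound with $\sup_{q}|k'(q)|\,|m|$, using $|k'|\leq C_k$ from \eqref{eq:Cond.lambda.cvx}, as the robust version.

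Finally, the infimum defining $\wh\Lambda$ could be $-\infty$, in which case the statement is trivial. Otherwise the argument above completes the proof. The main obstacle is the correct handling of the sign of $m$ against $k'$; everything else is direct from Theorem~\ref{th:Stretching}.
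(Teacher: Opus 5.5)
Your reduction is the same as the paper's proof. Both quadratic forms carry the common weight $\bigl(\xi-[k_{p^N}\xi]/[k_{p^N}]\bigr)^2$, so everything rests on the pointwise bound $H_N[p^N]\geq\wh\Lambda(p^N)\,k(p^N)$. The paper's one-line proof simply asserts this bound and never addresses the sign of $m:=[k_{p^N}(W'(p^N){-}G^N)]/[k_{p^N}]$. Your sign concern is justified, and it is a defect of the statement, not of your argument.

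The term $-\tfrac12 k'(p^N(x))\,m$ is bounded below by $-\tfrac12 m\sup_q k'(q)$ only when $m\geq 0$; for $m<0$ you would need $\inf_q k'(q)$. This really fails. Take $N\geq 2$, $G\equiv 0$ and $p^N\equiv 1$, and write $A(q):=k(q)W''(q)+\tfrac12k'(q)W'(q)$:
\begin{itemize}
\item $m=W'(1)$, the weight is $(\xi-[\xi])^2$, and $H_N=k(1)^2W''(1)$.
\item So the sharp constant is $k(1)W''(1)=A(1)-\tfrac12k'(1)W'(1)$.
\item The printed constant is $\wh\Lambda=\inf A-\tfrac12W'(1)\sup k'$.
\end{itemize}
Their difference is $A(1)-\inf A-\tfrac12|W'(1)|\bigl(\sup k'-k'(1)\bigr)$ whenever $W'(1)<0$. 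This is negative as soon as $k'(1)<\sup k'$ and $|W'(1)|$ is large enough. Then the claimed inequality fails for every nonconstant $\xi$.

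This is easy to arrange. Replace $W$ by $W-c\,b$ with $b(p)=\int_0^p k(q)^{-1/2}\dd q$. This leaves $A$ unchanged, since $A$ is the second derivative of $W$ with respect to $b$, and it sends $W'(1)\to-\infty$ as $c\to\infty$.

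So the printed $\wh\Lambda$ is false in general. It is fine when $k'$ is constant, e.g.\ $k(p)=\kappa p$, or when $m\geq0$. What your computation (and the paper's) actually yields is one of two corrected forms:
\begin{itemize}
\item replace the last term by $-\tfrac12|m|\sup_q|k'(q)|$; or
\item split cases, using $\sup k'$ if $m\geq 0$ and $\inf k'$ if $m<0$.
\end{itemize}
Your proposed robust version is the right statement to prove. It is also all that Corollary~\ref{co:LowBddLambda} needs, since there only $|k'|\leq C_k$ and a bound on $|m|$ in terms of $E$ enter.

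Drop two side remarks from your write-up:
\begin{itemize}
\item \eqref{eq:k(p)bounds} does not make $k$ monotone. It only gives $k(0)=0$ and, by the mean value theorem, $\sup k'\geq\ul\kappa$.
\item Even $k'\geq 0$ would not rescue the case $m<0$. The needed inequality $\tfrac12|m|k'(p)\geq\tfrac12|m|\sup k'$ forces $k'(p)=\sup k'$.
\end{itemize}
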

\begin{proof}
  We simply observe that
  $\langle \xi, \bbK_N(p^N) \xi \rangle = \int_\Omega k(p^N)\Bigl(\xi -
  \frac{[k_{p^N}\xi]}{[k_{p^N}]} \Bigr)^2 \dd x$ and recall the corresponding
  formula for $\bbH_N$ from \eqref{eq:StretchRelat} involving $H_N[p^N]$. Hence, the desired lower
  bound is found by observing that $H_N[p^N] \geq
  \wh{\Lambda}(p^N)k(p^N)$.
\end{proof}

Relying on the assumption \eqref{eq:Cond.lambda.cvx} we are now ready to give a
lower bound on $\Lambda_N^\mafo{inf}(E)$ that is independent of $N$ by an
infinitesimal sublevel stretching bound $\bfL^\mathrm{inf}(E)$.

\begin{corollary}[Lower bound on $\Lambda_N^\mafo{inf}(E)$]
\label{co:LowBddLambda}
Assuming \eqref{eq:k(p)bounds} and \eqref{eq:k.W.conds} hold, we have the lower
bound
\[
\Lambda_N^\mafo{inf}(E) 
\geq \bfL^\mathrm{inf}(E) 
:= \lambda_W - \frac{C_k}{2} 
    \biggl(  \Bigl( 1 + \tdfrac{\,\ol{\kappa}\,}{\ul{\kappa}} \Bigr) 
             \left\lVert G \right\rVert_{\rmL^\infty}
               + \tdfrac{1}{\ul{\kappa}} \bigl( B_1 E + B_2 \bigr) 
    \biggr)
    > -\infty.
\]
where the constants $\lambda_W, C_k, B_1$, and $B_2$ are defined in
\eqref{eq:Cond.lambda.cvx} and $\ol \kappa$ and $\ul\kappa$ in
\eqref{eq:k(p)bounds}.
\end{corollary}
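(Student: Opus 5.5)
The plan is to apply Lemma~\ref{le:LambdaIf.LowBd} at every $p^N\in\Sigma_N(E)$ and bound $\wh\Lambda(p^N)$ from below uniformly in $N$ and in $p^N$. By Definition~\ref{de:LocSublevSR}, any such uniform lower bound $\Lambda$ satisfies $\langle\xi,\bbH_N\xi\rangle\geq\Lambda\langle\xi,\bbK_N\xi\rangle$ on $\Sigma_N(E)$. Hence $\Lambda_N^\mafo{inf}(E)\geq \inf_{p^N\in\Sigma_N(E)}\wh\Lambda(p^N)\geq\bfL^\mathrm{inf}(E)$. The estimate of $\wh\Lambda(p^N)$ splits into two terms.

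\emph{First term.} For $q>0$ and $x\in\Omega$, the first condition in \eqref{eq:Cond.lambda.cvx} together with $|k'|\leq C_k$ gives
\[
k(q)W''(q)+\tfrac{k'(q)}2\bigl(W'(q)-G^N(x)\bigr)\geq \lambda_W-\tfrac{C_k}{2}\lVert G^N\rVert_{\rmL^\infty}.
\]
Since $G^N=\bbP_N G$ is a local average of $G$, we have $\lVert G^N\rVert_{\rmL^\infty}\leq \lVert G\rVert_{\rmL^\infty}$.

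\emph{Second term.} Here I bound $\frac{|[k_{p}(W'(p)-G^N)]|}{2[k_p]}\sup|k'|$ for $p=p^N\in\Sigma_N(E)$; I read $\sup k'$ as at most $C_k$ after taking absolute values. The ingredients are the following:
\begin{itemize}
\item By \eqref{eq:k(p)bounds} and $[p]=1$ we have $[k_p]\geq\ul\kappa$, and $[k_p|G^N|]\leq \ol\kappa\lVert G\rVert_{\rmL^\infty}$. So the loading part contributes at most $\frac{\ol\kappa}{\ul\kappa}\lVert G\rVert_{\rmL^\infty}$.
\item For the stress part, the stress control in \eqref{eq:Cond.lambda.cvx} gives $[|k_pW'(p)|]\leq B_1[W(p)]+B_2+B_3[p-1]$, and the $B_3$ term vanishes because $[p]=1$.
\item To bound $[W(p)]$ by the energy, I use $\calE_N(p)\leq E$, which gives $[W(p)]\leq E+[G^Np]\leq E+\lVert G\rVert_{\rmL^\infty}$.
\end{itemize}
Combining these, the second term contributes at most $\frac{C_k}{2\ul\kappa}\bigl(B_1E+B_2+B_1\lVert G\rVert_{\rmL^\infty}\bigr)$. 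Adding the two contributions yields a finite, $N$-independent lower bound.

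\emph{Main obstacle: matching the stated constant.} My bookkeeping produces an extra term $B_1\lVert G\rVert_{\rmL^\infty}/\ul\kappa$, which does not appear in $\bfL^\mathrm{inf}(E)$ as stated. I would first check whether $B_1\geq0$ can be assumed (it must be, up to modifying $B_2$, since otherwise the bound on $[W]$ goes the wrong way). I would then try to absorb this term, either through a different normalization of $W$ or by folding it into $B_2$. Failing that, I would accept a constant that differs harmlessly. In any case the essential conclusion, $\Lambda_N^\mafo{inf}(E)\geq\bfL^\mathrm{inf}(E)>-\infty$ uniformly in $N$, follows from the steps above. All other steps are routine applications of \eqref{eq:k(p)bounds}, \eqref{eq:Cond.lambda.cvx} and $[p^N]=1$.
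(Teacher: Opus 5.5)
Your route is the one the paper intends. The corollary is stated without a separate proof directly after Lemma~\ref{le:LambdaIf.LowBd}, and it follows by inserting \eqref{eq:Cond.lambda.cvx}, \eqref{eq:k(p)bounds}, $[p^N]=1$ and $\calE_N(p^N)\le E$ into $\wh\Lambda(p^N)$, exactly as you do. Reading the last term of $\wh\Lambda$ as $-\frac12|c|\sup|k'|$, with $c=[k_{p^N}(W'(p^N){-}G^N)]/[k_{p^N}]$, is the correct interpretation.

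Your bookkeeping is right, and the extra term is not your mistake. The printed $\bfL^{\mathrm{inf}}(E)$ comes from bounding $[\,|k_pW'(p)|\,]\le B_1E+B_2$, which tacitly uses $[W(p)]\le\calE_N(p)$. The same step appears in the proof of Lemma~\ref{le:CahinRule}, where $|[k_p(W'_p{-}G)]|\le B_1\calE(p(t))+B_2+\ol\kappa\|G\|_{\rmL^\infty}$ is written. Since $[W(p)]=\calE_N(p)+[G^Np]$, this step needs $[G^Np]\le0$ (e.g.\ $G\le0$) or $B_1=0$. Also, $B_1\ge0$ holds automatically: if $B_1<0$, the right-hand side of the stress control tends to $-\infty$ as $p\searrow0$ by \eqref{eq:W.coercive}, while the left-hand side is nonnegative.

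So do not try to absorb the term. $B_2$ is fixed data, and adding a constant to $W$ leaves $B_1E+B_2$ invariant. In fact the printed constant is wrong, as the following example shows.

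Take $k(p)=\kappa p$ and $W(p)=p^3-\log p$. These are admissible with $C_k=\kappa$, $B_1=3\kappa$, $B_2=\kappa$, $B_3=3\kappa$, since $|3p^3{-}1|\le 3p^3+1+3(p{-}1{-}\log p)$. Let $\lambda_W$ be the minimum of $kW''+\frac{k'}2W'=\frac{15\kappa}2p^2+\frac{\kappa}{2p}$, attained at some $p_\lambda$. Let $G=g_0>0$ on $[0,\frac12]$ and $G(x)=2g_0(1{-}x)$ on $[\frac12,1]$.

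Let $p^N$ equal $p_\lambda$ on two cells in $[0,\frac12]$ and a constant $r_N\to1$ elsewhere. Let $\xi$ be $+1$ on one of these cells and $-1$ on the other. Then $[k_{p^N}\xi]=0$, and at $E=\calE_N(p^N)$ the Rayleigh quotient gives
\[
\Lambda^{\mathrm{inf}}_N(E)\le\lambda_W-\frac\kappa2\Bigl(g_0-[Gp^N]+[p^NW'(p^N)]\Bigr)\le\lambda_W-\kappa .
\]
The last step uses $G\le g_0$ and Jensen's inequality, which gives $[p^NW'(p^N)]=3[(p^N)^3]-1\ge2$. On the other hand $E\to1-\frac34g_0$, so the printed $\bfL^{\mathrm{inf}}(E)$ tends to $\lambda_W-\frac\kappa2\bigl(4-\frac{g_0}4\bigr)$. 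This exceeds $\lambda_W-\kappa$ for $g_0>8$ and $N$ large, contradicting the stated bound.

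The correct statement is your bound, or the slightly sharper
\[
\Lambda^{\mathrm{inf}}_N(E)\ge\lambda_W-\frac{C_k}2\Bigl(2\|G\|_{\rmL^\infty}+\tdfrac1{\ul\kappa}\bigl(B_1(E{+}\|G\|_{\rmL^\infty})+B_2\bigr)\Bigr).
\]
This version estimates the loading part of $c$ as a $k_p$-weighted mean, $|[k_pG^N]|/[k_p]\le\|G\|_{\rmL^\infty}$, instead of using $\ol\kappa/\ul\kappa$. Either version is finite, independent of $N$, and nonincreasing in $E$. That is all Proposition~\ref{pr:Growth.yN}, Lemma~\ref{le:loc.Delta}, Proposition~\ref{pr:ComparApprox} and Theorem~\ref{th:UnifLipschitz} use.
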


With the above preparations we are now able to provide estimates independent of
$N$ for the growth of the tangent vectors $y^N$ satisfying \eqref{eq:ODE.Tang}.
For this we combine Theorem \ref{th:Stretching} and Corollary
\ref{co:LowBddLambda} with Gr\"onwall's lemma. For the norm induced by $\bbG_N$
on $\bfT_N$ we use the shorthand notation 
\[
 \norm{ y^N}{\bbG_N} :=  \bigl\langle y^N, \bbG_N(p^N)y^N \bigr\rangle^{1/2} . 
\]

\begin{proposition}[Growth estimate for $y^N$]
\label{pr:Growth.yN}
 Assuming \eqref{eq:k(p)bounds} and \eqref{eq:k.W.conds}, the
 solutions $(p^N,y^N)$  with $p^N(0) \in \Sigma_N(E)$
satisfy the growth estimate 
\begin{equation}
  \label{eq:Growth.yN}
 \begin{aligned} 
 \norm{ y^N(t)}{\bbG_N} 
  &\leq \ee^{ - \bfL^\mathrm{inf}(E)  \,t }   \,
      \norm{y^N(0)}{\bbG_N}  + \mathsf M_{ \bfL^\mathrm{inf}(E) } (t)\, 
   \sqrt{\,\ol\kappa\,} \,\|\zeta^N \|_{\rmL^\infty} \quad \text{for } t>0,
\end{aligned}
\end{equation}
where $\mathsf M_\lambda(t) := \int_0^t \ee^{-\lambda s} \dd s$. 
\end{proposition}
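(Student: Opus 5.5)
Since \eqref{eq:ODE.Tang} is a gradient flow, $\calE_N(p^N(t))\leq \calE_N(p^N(0))\leq E$ for all $t\geq0$ by Proposition~\ref{pr:ODE.GlobExi}. Hence $p^N(t)\in\Sigma_N(E)$ throughout, and Corollary~\ref{co:LowBddLambda} applies at every time with the same constant $\Lambda:=\bfL^\mathrm{inf}(E)$. I will work with $r(t):=\norm{y^N(t)}{\bbG_N}=\sqrt{2\calR_N(p^N(t),y^N(t))}$ and derive a linear differential inequality for $r$.

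\textbf{Step 1 (stretching term).} By Theorem~\ref{th:Stretching}, with $\xi^N=\bbG_N(p^N)y^N$,
\[
\tfrac{\rmd}{\rmd t}\calR_N = -\langle \xi^N,\bbH_N(p^N)\xi^N\rangle + \langle y^N,\zeta^N\rangle .
\]
Using the definition of $\Lambda_N^\mafo{inf}(E)\geq\Lambda$ together with $\langle \xi^N,\bbK_N\xi^N\rangle=\langle \bbG_N y^N,y^N\rangle = r^2$ (from $\bbK_N\bbG_N=\id$), the first term is bounded by $-\Lambda r^2$.

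\textbf{Step 2 (source term).} By Cauchy--Schwarz with weight $k$,
\[
\langle y^N,\zeta^N\rangle=\int_\Omega \frac{y^N}{\sqrt{k(p^N)}}\,\sqrt{k(p^N)}\,\zeta^N\dd x \leq r\,\Bigl(\int_\Omega k(p^N)\dd x\Bigr)^{1/2}\|\zeta^N\|_{\rmL^\infty}.
\]
By \eqref{eq:k(p)bounds} and $\int p^N=1$, the middle factor is at most $\sqrt{\ol\kappa}$. Since $\calR_N=\tfrac12 r^2$, we get
\[
r\dot r\leq -\Lambda r^2+\sqrt{\ol\kappa}\,\|\zeta^N\|_{\rmL^\infty}\, r .
\]
One could sharpen this by replacing $\zeta^N$ with its deviation from a constant, since $\int y^N=0$, but that is not needed here.

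\textbf{Step 3 (Gr\"onwall).} Dividing by $r$ gives $\dot r\leq -\Lambda r+\sqrt{\ol\kappa}\,\|\zeta^N\|_{\rmL^\infty}$ wherever $r>0$. Integrating with the factor $\ee^{\Lambda t}$ yields
\[
r(t)\leq \ee^{-\Lambda t}r(0)+\sqrt{\ol\kappa}\,\|\zeta^N\|_{\rmL^\infty}\int_0^t\ee^{-\Lambda(t-s)}\dd s,
\]
and the last integral equals $\mathsf M_\Lambda(t)$ after the substitution $s\mapsto t-s$. This is exactly \eqref{eq:Growth.yN}.

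The only delicate point is the division by $r$ where $r$ may vanish. I would handle it by working with $r_\eps:=\sqrt{r^2+\eps^2}$. From Steps 1--2, $r_\eps\dot r_\eps=r\dot r\leq -\Lambda r^2+c\,r$ with $c:=\sqrt{\ol\kappa}\,\|\zeta^N\|_{\rmL^\infty}$. Since $r\leq r_\eps$ and $-\Lambda r^2\leq -\Lambda r_\eps^2+\max\{\Lambda,0\}\eps^2$, this gives
\[
\dot r_\eps\leq -\Lambda r_\eps + c + \max\{\Lambda,0\}\,\eps .
\]
Applying Gr\"onwall to $r_\eps$ and letting $\eps\to0$ recovers the estimate.
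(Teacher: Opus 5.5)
Your proof is correct and follows the paper's argument step by step: invariance of $\Sigma_N(E)$, then Theorem~\ref{th:Stretching} combined with Corollary~\ref{co:LowBddLambda}, then a Cauchy--Schwarz bound on $\langle y^N,\zeta^N\rangle$, and finally Gr\"onwall applied to the regularization $(\norm{y^N}{\bbG_N}^2+\delta)^{1/2}$ before letting $\delta\to0$. The only difference is cosmetic: the paper bounds the source term through $\langle\zeta^N,\bbK_N(p^N)\zeta^N\rangle^{1/2}$ (the mean-zero sharpening you mention) and then drops the mean correction, so it reaches the same constant $\sqrt{\ol\kappa}\,\|\zeta^N\|_{\rmL^\infty}$.
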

\begin{proof} We first observe that $ p^N(0) \in \Sigma_N(E)$ implies
  $p^N(t)\in \Sigma_N(E)$ for all $t>0$, see Proposition \ref{pr:ODE.GlobExi}.  
Next we use $\calR_N(p^N,y^N)=\frac12\norm{y^N}{\bbG_N}^2$ with
\eqref{eq:StretchRelat} and Corollary~\ref{co:LowBddLambda} to obtain 
\[
\frac\rmd{\rmd t} \norm{y^N(t)}{\bbG_N}^2 \leq -2 \, \bfL^\mathrm{inf}(E) \,
\norm{y^N(t)}{\bbG_N}^2 + 2\,\big\langle y^N(t), \zeta^N\big\rangle. 
\]
The last term can be estimated using $\bbG_N\bbK_N=\mafo{id}_{\bfP_N}$ and the
Cauchy-Schwarz inequality as follows
\begin{align*}
\big\langle y^N, \zeta^N\big\rangle 
&\leq \norm{y^N}{\bbG_N} \big\langle \zeta^N, \bbK_N(p^N)\zeta^N\big\rangle^{1/2} 
 =  \norm{y^N}{\bbG_N} \biggl(\int_\Omega k(p^N) 
      \Big(\zeta^N-\tdfrac{[k_{p^N}\zeta^N]}{ [k_{p^N}]} \Big)^2 \dd x\biggr)^{1/2} 
\\
&\leq  \norm{y^N}{\bbG_N} \biggl(\int_\Omega \ol\kappa \,p^N \, 
   \big(\zeta^N)^2  \dd x\biggr)^{1/2} \leq \norm{y^N}{\bbG_N} \,\varrho \quad
   \text{ with } \varrho:= \sqrt{\,\ol\kappa\,}\,\|\zeta^N \|_{\rmL^\infty}.
\end{align*} 
 
For $\delta>0$ we now set $n(t)=\big(
\norm{y^N(t)}{\bbG_N}^2+\delta\big)^{1/2}$ and find 
\[
2n \dot n = \tfrac\rmd{\rmd t} (n^2) = \tfrac\rmd{\rmd t} 
 \norm{y^N(t)}{\bbG_N}^2 \leq - 2\,\bfL^\mathrm{inf}(E) \,n^2 + 2\delta
  \, \bfL^\mathrm{inf}(E) + 2 n \varrho.
\]
Because of $n\geq \delta^{1/2} >0$ we can divide by $2 n$ and obtain $\dot n
\leq - \bfL^\mathrm{inf}(E)  \,n + \delta^{1/2} |  \bfL^\mathrm{inf}(E) | +
\varrho$, and Gr\"onwall's lemma yields 
\[
\norm{y^N(t)}{\bbG_N} \leq n(t) \leq \ee^{- \bfL^\mathrm{inf}(E) \,t}  n(0) +
\mathsf M_{\bfL^\mathrm{inf}(E) } (t) \big( \delta^{1/2} | \bfL^\mathrm{inf}(E)| +
\varrho \big) \quad \text{for all } t>0.
\] 
Using $n(0)=( \norm{y^N(0)}{\bbG_N}{+}\delta )^{1/2}$ the limit $\delta\to
0$ gives the desired result. 
\end{proof}

To compare two solutions $p^N(t)$ and $\wt p^N(t)$ starting at different
initial data $p^N(0) \neq \wt p^N(0)$ or to compare solutions $p^N(t)$ and
$p^M(t)$  with $N \neq M$ we need to derive estimates using the geodesic
distance $\calD^k_N$  induced by the Riemannian metric tensor $\bbG_N$. This
will be the focus of the next section.

\section{Geodesic distances} 
\label{se:GeodDist}

Before we derive uniform contraction and Lipschitz estimates for the flows
constructed above, we have to introduce the distances associated with the
Riemannian metric tensor $\bbG_N$ in $\bbP_N$. In Section \ref{se:Def.Bhatt} we
first study the 
classical Bhattacharya distance (also called spherical Hellinger distance),
cf.\ \cite{Miel25?NHDV} and then in Section \ref{se:Dist.DkN} introduce the
distance $ \calD^k_N$. Section \ref{su:Def.calD.k} discusses the limiting
distance $\calD^k$ on $\bfP$. 

\subsection{The  Bhattacharya distance} 
\label{se:Def.Bhatt}

The case $k^*(p)=4p$ gives the Bhattacharya distance $\Bh=\calD^{k^*}$. 
For $p_0,p_1\in \bfP$ we have  
\begin{align*}
\He(p_0,p_1)&= \bigl\lVert \sqrt{p_1} - \sqrt{p_0} \bigr\rVert_{\rmL^2(\Omega)}\in
[0,\sqrt{2}] \quad \text{and}  
\\
 \Bh (p_0,p_1)&= 2 \arcsin\biggl( \frac12\He(p_0,p_1)\biggr) =
\arccos\biggl( 1- \frac12\He(p_0,p_1)^2\biggr) \in
[0,\pi/2]  .
\end{align*}
The Hellinger distance $\He$ is a geodesic distance on $\rmL^1_\geq(\Omega)$
and the Bhattacharya distance $\Bh$ is the induced Fisher-Rao distance on the
closed subset $\bfP\subset \rmL^1_\geq(\Omega)$ of probability densities, see
\cite{ Miel25?NHDV}. For our theory it will be important that the geodesic
curves in $(\bfP,\Bh)$ are given explicitly in \cite{LasMie19GPCA,
  Miel25?NHDV}, namely 
\begin{align*}
\gamma^\Bh(s,x)&= \wt n(t(s)) \Big( (1{-}t(s))\sqrt{p_0(x)} + t(s)\sqrt{p_1(x)}
\Big)^2 \\
&\text{with }t(s) = \frac{\sin(s\delta)}{\sin(s\delta)+\sin(\delta{-}s\delta)},
 \quad \delta = \Bh(p_0,p_1), 
\\
&\text{and } \wt n(t) =    \frac1{1-2(t{-}t^2)(1{-}\cos\delta)}.
\end{align*} 

For our analysis it will be very helpful that geodesics between piecewise
constant densities in $\bfP_N$ stay in $\bfP_N$, which is easily checked by the
explicit formula.

\begin{lemma}[$\bfP_N$ is geodesically complete]
\label{le:Bhatt.PN.Complete}
For all $p_0,p_1\in \bfP_N$ the unique Bhattacharya geodesic $s\mapsto
\gamma^\Bh(s) \in \bfP$ stays inside of $\bfP_N$. 
\end{lemma}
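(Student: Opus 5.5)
We argue directly from the explicit geodesic formula for $\gamma^\Bh$ recalled above. Let $p_0,p_1\in\bfP_N$, so $p_j=\sum_{i=1}^N p_{j,i}\bbone_{[\tau^N_{i-1},\tau^N_i[}$ with $p_{j,i}>0$. Since the square root acts pointwise, $\sqrt{p_j}=\sum_i \sqrt{p_{j,i}}\,\bbone_{[\tau^N_{i-1},\tau^N_i[}$ is again a step function on the same grid. For each fixed $s\in[0,1]$, the parameter $t(s)\in[0,1]$ and the factor $\wt n(t(s))$ depend only on $s$ and on $\delta=\Bh(p_0,p_1)$, not on $x$. Hence
\[
\gamma^\Bh(s,\cdot)=\wt n(t(s))\sum_{i=1}^N\Bigl((1{-}t(s))\sqrt{p_{0,i}}+t(s)\sqrt{p_{1,i}}\Bigr)^2\bbone_{[\tau^N_{i-1},\tau^N_i[},
\]
which is of the form \eqref{eq:ApproxN}. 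Each coefficient is strictly positive, because it is a convex combination of positive numbers, squared, times $\wt n>0$. For the positivity of $\wt n$, note that $2(t{-}t^2)\le\frac12$ and $1-\cos\delta\le 1$ for $\delta\in[0,\pi/2]$, so the denominator is at least $\frac12$.

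It remains to check that $\gamma^\Bh(s)$ has unit mass, so that it lies in $\bfP$ and therefore in $\bfP_N$. This does not depend on the piecewise constant structure; it follows from $\gamma^\Bh(s)\in\bfP$, as stated in \cite{LasMie19GPCA,Miel25?NHDV}. If we want it directly: expanding the square gives mass
\[
(1{-}t)^2+t^2+2t(1{-}t)\textstyle\int\sqrt{p_0p_1}\dd x .
\]
Using $\int\sqrt{p_0p_1}\dd x=1-\frac12\He^2=\cos\delta$, this equals $1-2(t{-}t^2)(1{-}\cos\delta)$, and $\wt n(t)$ is exactly its reciprocal.

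The degenerate case $p_0=p_1$ ($\delta=0$), where $t(s)$ is a $0/0$ expression, needs a brief separate remark: there the geodesic is constant and the claim is trivial. Uniqueness of the geodesic is taken from the cited references. No step is a genuine obstacle; the only care needed is the mass normalization identity and the degenerate case.
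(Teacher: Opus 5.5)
Your proof is correct and follows the same route as the paper, which gives no separate proof and only remarks that the claim is easily checked from the explicit geodesic formula. You supply exactly those details: the pointwise square root preserves the grid, $t(s)$ and $\wt n(t(s))$ do not depend on $x$, positivity follows from $\wt n\in[1,2]$, unit mass follows from $\int\sqrt{p_0p_1}\dd x=\cos\delta$, and the degenerate case $\delta=0$ is handled separately.
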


The following lemma will be useful to show that Bhattacharya geodesics
connecting $p_0$ and $p_1$ from a sublevel $\Sigma(E)$ lie in a sublevel
$ \Sigma(\wt E) $ for a suitable $ \wt E \geq E $. 

\begin{lemma}[Density bounds for geodesics in $(\bfP,\Bh)$]
\label{le:DensBhatt}
For all $p_0,p_1\in \bfP$ the Bhattacharya geodesic satisfies the estimate 
\begin{equation}
  \label{eq:EstBhGeo}
\forall\, s\in [0,1]: \quad  \min\{p_0(x),p_1(x)\}\leq  \gamma^\Bh(s,x) \leq 2
\max\big\{ p_0(x),p_1(x)\big\}  \quad \text{a.e.\  in } \Omega.
\end{equation}
\end{lemma}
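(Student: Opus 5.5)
Fix $x$ and write $a=\sqrt{p_0(x)}$, $b=\sqrt{p_1(x)}$, so that
\[
\gamma^\Bh(s,x)=\wt n(t)\bigl((1{-}t)a+tb\bigr)^2 \quad\text{with } t=t(s)\in[0,1].
\]
The first step is to check that $t(s)\in[0,1]$. For $\delta\in{]0,\pi/2]}$ and $s\in[0,1]$, both $\sin(s\delta)$ and $\sin(\delta{-}s\delta)$ are nonnegative and not both zero, so $t(s)\in[0,1]$. The degenerate case $\delta=0$ means $p_0=p_1$ and is trivial. The rest of the argument only uses $t\in[0,1]$ and $\delta\in[0,\pi/2]$.

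Next we bound the normalization factor $\wt n(t)$. For $t\in[0,1]$ we have $t{-}t^2\in[0,1/4]$, and $1{-}\cos\delta\in[0,1]$ because $\delta\le\pi/2$. Hence the denominator $1-2(t{-}t^2)(1{-}\cos\delta)$ lies in $[1/2,1]$, which gives
\[
1\le \wt n(t)\le 2 .
\]
This is where $\delta\le\pi/2$ is needed. It holds since $\Bh\in[0,\pi/2]$, because $\He\le\sqrt2$.

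Now the pointwise bounds follow. For the upper bound, convexity of $r\mapsto r^2$ gives
\[
\bigl((1{-}t)a+tb\bigr)^2\le(1{-}t)a^2+tb^2\le \max\{a^2,b^2\}=\max\{p_0(x),p_1(x)\},
\]
and multiplying by $\wt n\le2$ yields the upper estimate. For the lower bound, since $a,b\ge0$ we have $(1{-}t)a+tb\ge\min\{a,b\}\ge0$, so the square is at least $\min\{p_0(x),p_1(x)\}$, and $\wt n\ge1$ preserves this.

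These bounds hold for every $x$ at which $p_0(x),p_1(x)\ge0$ are defined, hence a.e. in $\Omega$, and for all $s\in[0,1]$. The only point needing care is the constraint $\delta\le\pi/2$ in the bound on $\wt n$. Without it the factor $1{-}\cos\delta$ could exceed $1$, and the upper constant $2$ would fail.
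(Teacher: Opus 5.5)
Your proposal is correct and follows the paper's proof: bound $\wt n(t)\in[1,2]$ using $\delta=\Bh(p_0,p_1)\le\pi/2$, then sandwich the convex combination $(1{-}t)\sqrt{p_0}+t\sqrt{p_1}$ between $\min\{p_0,p_1\}^{1/2}$ and $\max\{p_0,p_1\}^{1/2}$. The only additions are details the paper leaves implicit, namely that $t(s)\in[0,1]$ and the explicit computation showing the denominator of $\wt n$ lies in $[1/2,1]$.
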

\begin{proof} Using $\delta=\Bh(p_0,p_1) \in [0,\pi/2]$ we easily see $\wt n(t)
  \in [1,2]$. With this and $\min\left\{ p_0, p_1 \right\}^\frac{1}{2} \leq \sqrt{p_j}
  \leq \max\left\{p_0,p_1\right\}^\frac{1}{2}$, the desired result follows. 
\end{proof}

The topology induced by $\He$ or $\Bh$ on $\bfP$ is the same as the strong
topology of $\rmL^1(\Omega)$ because of the well-known estimates (see e.g.\
\cite{Miel25?NHDV})
\begin{equation}
  \label{eq:Bh.He.L1}
  \frac8{\pi^2} \,\Bh(p,\wt p)^2 \leq \He(p,\wt p)^2 \leq \| p{-}\wt
p\|_{\rmL^1(\Omega)} \leq 2\, \He(p,\wt p) \leq 2  \, \Bh(p,\wt p).  
\end{equation}

\subsection{The distance \texorpdfstring{$\calD^k_N$}{DkN} on 
    \texorpdfstring{$\bfP_N$}{PN}}
\label{se:Dist.DkN}

To combine the knowledge on the Bhattacharya distance with the stretching
properties of the previous section, we denote the Riemannian distance on
$(\bfP_N, \bbG_N )$ by the symbol $\calD^k_N$. In the case $k^*(p)=4p$ we have
$\calD^{k^*}_N=\Bh|_{\bfP_N}$. Below, it will be important that $k$ satisfies
the upper and lower bounds in \eqref{eq:k(p)bounds}, which means that we are
able to compare with $\Bh$. For $ p_0^N, p_1^N \in \bfP_N$ the distance
$\calD^k_N(p_0^N, p_1^N) $ is defined via 
\begin{multline}
\label{eq:Def.Metric.N}
\quad  \calD^k_N(p_0^N, p_1^N)^2 
  := \inf\biggl\{\int_0^1 \hspace{-0.5em} \int_\Omega
      \frac{\pl_sp^N(s,x)^2}{k(p^N(s,x))} 
    \dd x \dd s 
    \biggm| p^N \in \rmC^1([0,1]; \bfP_N), 
 \\
    \ p^N(0) = p_0^N,\ p^N(1) = p_1^N
  \biggr\} . \quad
\end{multline}
The minimizers are the so-called (constant-speed) geodesic curves connecting
the points $p_0^N$ and $p_1^N$. The interior integral in
\eqref{eq:Def.Metric.N} is precisely $\langle \pl_s{p^N}, 
\bbG_N(p^N)\pl_s{p^N} \rangle$. An alternative formulation using $\bbK_N$ and
the continuity equation $\pl_s p^N = \bbK_N(p^N)\xi^N $ reads 
\begin{multline*}
  \calD^k_N(p_0^N, p_1^N)^2 = \inf\biggl\{ 
    \int_0^1 \langle \xi^N, \bbK_N(p^N) \xi^N \rangle \, \rmd{s}
    \biggm| (p^N, \xi^N) \in \rmC^1([0, 1], \bfP_N \times \bfT_N^*), \\ \pl_s p^N = \bbK_N(p^N)\xi^N, \;
    p^N(0) = p_0^N, \ p^N(1) = p_1^N  
  \biggr\}.
\end{multline*}
Note that geodesics exist by compactness of $\bfP_N$.

Since $p\mapsto k(p)$ has an upper and a lower linear bound, we can compare
$\calD^k_N$ with $\Bh$ as follows.

\begin{lemma}[Comparing $\calD_N^k$ and $\Bh$]
\label{le:CompareDist} 
If $k$ satisfies the assumption \eqref{eq:k(p)bounds}, then  the
distances $\calD^k_N$ and $\Bh$ are equivalent on $\bfP_N$ and satisfy the
estimate
\[
C_\mafo{low} \, \Bh(p^N_0,p^N_1) \leq \calD_N^k(p^N_0,p^N_1) \leq
  C_\mafo{upp}  \,\Bh(p^N_0,p^N_1)  \quad \text{for all }p^N_0,p^N_1\in \bfP_N.
\]
with $C_\mafo{low}= 2/\sqrt{\ol\kappa}$  and $C_\mafo{upp}=
2 / \sqrt{\ul\kappa}$. 
\end{lemma}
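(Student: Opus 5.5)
The plan is to compare the two Riemannian metrics pointwise and then pass to the infima in the definitions of the distances. For any curve $p^N\in \rmC^1([0,1];\bfP_N)$, assumption \eqref{eq:k(p)bounds} gives pointwise
\[
\frac{4}{\ol\kappa}\,\frac{(\pl_s p^N)^2}{4p^N} \;\leq\; \frac{(\pl_s p^N)^2}{k(p^N)} \;\leq\; \frac{4}{\ul\kappa}\,\frac{(\pl_s p^N)^2}{4p^N}.
\]
Integrating over $\Omega\times[0,1]$ bounds the $k$-action of the curve between $\frac4{\ol\kappa}$ and $\frac4{\ul\kappa}$ times its $k^*$-action, where $k^*(p)=4p$. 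Taking the infimum over admissible curves in $\bfP_N$ yields
\[
\frac4{\ol\kappa}\,\calD^{k^*}_N{}^2\leq (\calD^k_N)^2\leq \frac4{\ul\kappa}\,\calD^{k^*}_N{}^2 .
\]
Taking square roots produces exactly the constants $C_\mafo{low}=2/\sqrt{\ol\kappa}$ and $C_\mafo{upp}=2/\sqrt{\ul\kappa}$.

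It remains to identify $\calD^{k^*}_N$ with $\Bh$ restricted to $\bfP_N$. This is the main point, since $\calD^{k^*}_N$ is a minimum over curves confined to $\bfP_N$, while $\Bh$ is the geodesic distance in all of $\bfP$. The inequality $\calD^{k^*}_N\geq \Bh$ is immediate, because curves in $\bfP_N$ are admissible for the Fisher–Rao/Bhattacharya length functional on $\bfP$; here I use that $\Bh=\calD^{k^*}$ is the induced length distance with metric $\int (\dot p)^2/(4p)\dd x$. For the reverse inequality I invoke Lemma~\ref{le:Bhatt.PN.Complete}: the Bhattacharya geodesic between $p_0^N,p_1^N\in\bfP_N$ stays in $\bfP_N$, so it is an admissible competitor in \eqref{eq:Def.Metric.N}. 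Its action equals $\Bh(p^N_0,p^N_1)^2$, since it is a constant-speed geodesic.

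One technical detail needs checking. The explicit geodesic $\gamma^\Bh$ should be $\rmC^1$ in $s$ with values in $\bfP_N$; this follows from the formula, since $t(s)$ and $\wt n$ are smooth. If $p_0^N$ or $p_1^N$ vanishes on some cell, positivity holds only in the interior of the curve. Apart from this, I do not expect any obstacle: the argument is a direct comparison of integrands, combined with the geodesic completeness of $\bfP_N$.
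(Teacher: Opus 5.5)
Your proposal is correct and follows the same route as the paper, which gives no separate proof but relies on the pointwise comparison of the integrands $(\pl_s p)^2/k(p)$ and $(\pl_s p)^2/(4p)$ via \eqref{eq:k(p)bounds}. It combines this with the identity $\calD^{k^*}_N=\Bh|_{\bfP_N}$ for $k^*(p)=4p$, which rests on Lemma~\ref{le:Bhatt.PN.Complete}. You justify that identity correctly: the lower bound holds because curves in $\bfP_N$ are admissible in $\bfP$, and the upper bound holds because the Bhattacharya geodesic is a $\rmC^1$ competitor lying in $\bfP_N$.
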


Without further properties we cannot expect that $\calD^k_N$ has as good
properties as $\Bh$, namely that $\bfP_N$ is geodesically complete in
$\big(\calP_{NM}, \calD^k_{NM}\big)$ for all $M\in \N$. In particular, one has
to be aware that in general  
geodesics in $(\bfP_N,\calD^k_N)$  may not be unique and we may have
\[
\exists\: N,M\in \N\ \exists\: p_0^N, p_1^N \in \bfP_N: \quad 
\calD^k_{MN}(p_0^N, p_1^N) \lneqq \calD^k_N(p_0,p_1).
\]
See Appendix \ref{se:calD.Counterexample} for an example with $N=2$ and $M
\gg 1$, but we expect there should already be counterexamples with $N=M=2$.

\subsection{The distance \texorpdfstring{$\calD^k$}{Dk} on the infinite dimensional
    space \texorpdfstring{$\bfP$}{P} }
\label{su:Def.calD.k}

Using the above uniform estimates with respect to $\Bh$ allow us 
to define a distance $\calD^k$ on $\bfP\ti\bfP$ as follows:
\begin{equation}
  \label{eq:Def.calD.k}
  \calD^k(p_0,p_1):= \lim_{N\to \infty} \Big( \lim_{M\to \infty}
  \calD^k_{M!}\big( \bbP_N p_0, \bbP_N  p_1 \big) \Big) .
\end{equation}
Note that in the inner limit, the value  $\calD^k_{M!}\big( \bbP_N p, \bbP_N
\wt p\big)$ is well-defined for all $M\geq N$ because then $M!$ is a multiple
of $N$. Moreover, because of $\calD^k_{M} \geq \calD^k_{M!}$ we also have 
$\calD^k(\bbP_Np_0, \bbP_N p_1) = \inf_{K\in \N} \calD^k_{NK}(\bbP_Np_0, \bbP_N
p_1) $, but having a limit instead of an infimum is crucial, see
\cite[Exa.\,3.6]{DeSoTa25ICSH} for a related case where a definition via infimum
leads to the failure of the triangle inequality. 

\begin{proposition}[Definition of $\calD^k$]
\label{pr:def.calD.k}
Under assumption \eqref{eq:k(p)bounds}, $\calD^k:\bfP\ti\bfP\to {[0,\infty[}$
is well-defined and provides a distance on $\bfP$ satisfying
\[
C_\mafo{low} \, \Bh(p_0,p_1) \leq \calD^k(p_0,p_1) \leq
  C_\mafo{upp}  \,\Bh(p_0,p_1)  \quad \text{for all }p_0,p_1\in \bfP.
\]
\end{proposition}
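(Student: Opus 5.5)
The plan is to show that both limits in \eqref{eq:Def.calD.k} exist by monotonicity and Cauchy arguments, using the comparison of Lemma~\ref{le:CompareDist} as the only quantitative input. Then the distance axioms and the equivalence with $\Bh$ pass to the limit.

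\emph{Inner limit.} Fix $N$ and $q_j:=\bbP_N p_j\in\bfP_N$. For $M\geq N$, $M!$ divides $(M{+}1)!$, so $\bfP_{M!}\subset\bfP_{(M+1)!}$. On $\bfP_{M!}$ the tensor $\bbG_{(M+1)!}$ restricts to $\bbG_{M!}$, since both are the same integral $\int_\Omega \dot p^2/k(p)\dd x$. Hence every admissible curve for $\calD^k_{M!}$ is admissible for $\calD^k_{(M+1)!}$. The sequence $M\mapsto\calD^k_{M!}(q_0,q_1)$ is therefore nonincreasing and bounded below by $C_\mafo{low}\Bh(q_0,q_1)\geq0$ by Lemma~\ref{le:CompareDist}, so it converges. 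Call the limit $d_N(p_0,p_1)$. Passing to the limit in Lemma~\ref{le:CompareDist} gives $C_\mafo{low}\Bh(\bbP_Np_0,\bbP_Np_1)\leq d_N\leq C_\mafo{upp}\Bh(\bbP_Np_0,\bbP_Np_1)$.

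Each $\calD^k_{M!}$ is a metric on $\bfP_{M!}$, so the triangle inequality and symmetry hold for $d_N$ on $\bbP_N\bfP$. This is exactly why a limit rather than an infimum is used: the triangle inequality must hold at every fixed level $M!$ with all three points in $\bfP_{M!}$. That requires $M\geq N$ and also $M\geq N'$ when comparing projections at different $N$, which is fine in the limit.

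\emph{Outer limit.} I would show $(d_N(p_0,p_1))_N$ is Cauchy. For $N,N'$ and $M\geq\max\{N,N'\}$, all four points $\bbP_Np_j,\bbP_{N'}p_j$ lie in $\bfP_{M!}$. The triangle inequality for $\calD^k_{M!}$ combined with Lemma~\ref{le:CompareDist} gives
\[
\big|\calD^k_{M!}(\bbP_Np_0,\bbP_Np_1)-\calD^k_{M!}(\bbP_{N'}p_0,\bbP_{N'}p_1)\big|\leq C_\mafo{upp}\sum_{j=0,1}\Bh(\bbP_Np_j,\bbP_{N'}p_j).
\]
Letting $M\to\infty$ yields $|d_N-d_{N'}|\leq C_\mafo{upp}\sum_j\Bh(\bbP_Np_j,\bbP_{N'}p_j)$. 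Since $\bbP_Np_j\to p_j$ in $\rmL^1(\Omega)$, \eqref{eq:Bh.He.L1} gives $\Bh(\bbP_Np_j,p_j)\leq \frac{\pi}{2\sqrt2}\|\bbP_Np_j-p_j\|_{\rmL^1}^{1/2}\to0$, so the right-hand side tends to $0$ and the limit $\calD^k(p_0,p_1)$ exists.

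\emph{Distance properties.} Continuity of $\Bh$ with respect to $\rmL^1$ convergence, which follows from \eqref{eq:Bh.He.L1} and the triangle inequality for $\Bh$, lets me pass to the limit in the two-sided bound on $d_N$. This gives $C_\mafo{low}\Bh(p_0,p_1)\leq\calD^k(p_0,p_1)\leq C_\mafo{upp}\Bh(p_0,p_1)$, and in particular definiteness, because $\Bh$ is a metric. Symmetry and the triangle inequality are inherited: for three points $p_0,p_1,p_2$, apply the triangle inequality of $\calD^k_{M!}$ to their $N$-projections, then let $M\to\infty$ and then $N\to\infty$.

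The main obstacle is the Cauchy estimate in $N$. It only works because all the approximating distances are evaluated in the common space $\bfP_{M!}$ with the uniform constant $C_\mafo{upp}$, so I expect most of the care to go into checking the nesting and admissibility at that step.
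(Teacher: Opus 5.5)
Your proof is correct and follows essentially the same route as the paper: a monotone inner limit from the nesting $\bfP_{M!}\subset\bfP_{(M+1)!}$, then a Cauchy argument in $N$ that uses the triangle inequality in a common space $\bfP_{M!}$ together with Lemma~\ref{le:CompareDist} and \eqref{eq:Bh.He.L1}. The differences are cosmetic: the paper first passes to the limit $\calA$ on $\bigcup_N\bfP_N$ (allowing mixed levels $\bbP_Np_0,\bbP_Lp_1$) and uses its inherited triangle inequality, whereas you apply the triangle inequality at the finite level $M!$ before letting $M\to\infty$, and you also make explicit the passage to the limit in the two-sided $\Bh$-bounds, which the paper leaves implicit.
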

\begin{proof}
To study the inner limit we fix $N,L\in \N$ and observe that the sequence  
\[
M\mapsto  \calD^k_{M!}\big( \bbP_N p_0, \bbP_L p_1\big)
\]
is well-defined for $M\geq \max\{L,N\}$ and nonincreasing in $M$ because of 
$\bfP_{\max\{L,N\}}\subset \bfP_{M!}\subset \bfP_{(M{+}1)!}$. Hence, 
the limit $\calA(\bbP_N p_0, \bbP_L p_1)$ exists and can be estimated from above
and below by $\Bh(\bbP_N p_0, \bbP_L p_1)$ using Lemma \ref{le:CompareDist}
with $M!$ in place of $N$. 

Thus, $\calA$ is well-defined on $\wt\bfP \ti \wt \bfP$ with
$\wt\bfP:= \bigcup_{N\in \N} \bfP_N$, which is dense in
$\bfP\subset \rmL^1(\Omega)$, because for all $p_0\in \bfP$ we have
$\bbP_N p_0 \to p_0$ in $\rmL^1(\Omega)$ as $N\to \infty$.  By
\eqref{eq:Bh.He.L1} this implies
$\calA(\bbP_Np_j,\bbP_L p_j)\leq C_\mafo{upp} \Bh(\bbP_Np_j,\bbP_L p_j) \leq C
\|\bbP_Np_j{-}\bbP_L p_j \|_{\rmL^1}^{1/2} \to 0$.  Moreover, $\calA$ inherits
the triangle inequality from $\calD^k_{M!}$ by taking the limit $M\to \infty$.
With this we find
\begin{align*}
&\big| \calA(\bbP_Np_0,\bbP_N p_1)- \calA(\bbP_L p_0,\bbP_L p_1) \big|  \\
&\leq \big| \calA(\bbP_Np_0,\bbP_N p_1)- \calA(\bbP_L p_0,\bbP_N p_1) \big|  
  +\big| \calA(\bbP_Lp_0,\bbP_N p_1)- \calA(\bbP_L p_0,\bbP_L p_1) \big|  
\\ 
& \leq  \calA(\bbP_Np_0,\bbP_L p_0) + \calA(\bbP_N p_1,\bbP_L p_1) \\
%   \\ &
&\leq C \|\bbP_Np_0{-}\bbP_L p_0 \|_{\rmL^1}^{1/2} + \|\bbP_Np_1{-}\bbP_L p_1
\|_{\rmL^1}^{1/2}  \to 0 \quad \text{ as } N, L \to \infty. 
\end{align*}
Hence, the outer limit in \eqref{eq:Def.calD.k} exists, and $\calD^k$ is
well-defined and again satisfies the triangle inequality.
\end{proof}

On all of $\bfP$ we may try to define the distance $\wt\calD^k$ by minimizing
the viscous cost
\begin{equation}
  \label{eq:Def.Metric}
  \wt\calD^k(p_0,p_1)^2 := \inf\biggl\{
    \int_0^1 \hspace{-0.5em} \int_\Omega
    \frac{\pl_sp(s,x)^2}{k(p(s,x))} \dd x \dd s  
    \biggm| p \in \rmC^1([0,1];\bfP), \ p(0)=p_0,\ p(1)=p_1 
  \biggr\}.
\end{equation}
However, one may encounter the nonexistence of geodesics, but there are
still approximate geodesics and $(\bfP,\wt\calD^k)$ might be a length space. 

\begin{remark}[Towards a characterization of $\calD^k$] 
It is expected that $\calD^k= \wt \calD^k$, but we are not able to show
this. If $p\mapsto k(p)$ is concave, then the functional defined in
\eqref{eq:Def.Metric} is convex, and a simple application of Jensen's
inequality shows that $\calD^k_{NM}|_{\bfP_N} = \calD^k_N$. Moreover, the existence
of constant-speed geodesics can be shown using convexity. Thus, we conjecture
that $\calD^k= \wt \calD^k$ holds in this case. 

For general $k$, in particular the one in Appendix
\ref{se:calD.Counterexample}, this might still be true, but it remains open
whether $\big(\bfP,\calD^k\big)$ is a geodesic distance or at least a length
space. 

More importantly, it would be necessary to show upper and lower bounds for the
geodesics in $\big(\bfP,\calD^k\big)$ generalizing the estimates in Lemma
\ref{le:DensBhatt}, even in finite dimensions. There the geodesics
$s \mapsto \gamma^N(s)=\big(\gamma_i(s)\big)_{i=1,..,N}$ satisfy the equation
\[
\gamma'_i(s) = k(\gamma_i(s)) \big( \xi_i(s){-}\lambda(s)\big), \quad 
\xi'_i(s) = - \frac12 \,k'(\gamma_i(s)) \big( \xi_i(s){-}\lambda(s)\big)^2 \quad
\]
with $ \lambda(s) = [k(p)\xi]/[k(p)]=\big(\sum_{j=1}^N k(\gamma_j(s))\xi_j(s)\big) \big/
\big( \sum_{j=1}^N k(\gamma_j(s))\big) $. A direct calculation gives
$\lambda' = \frac1{2[k(p)]} \: \sum_{j=1}^N k'(\gamma_j)
k(\gamma_j)\big(\xi_j{-}\lambda\big)^2 \geq 0$, and we find the curvature estimate
\begin{align*}
\gamma''_i&= k'(\gamma_i)\gamma'_i \big(\xi_i{-}\lambda\big) + k(p)\big( \xi'_i -
\lambda'\big) 
 % \\ & 
= \frac{k'(\gamma_i)}{2k(\gamma_i)} \,(\gamma'_i)^2 - k(\gamma_i)\lambda' . 
\end{align*}
Again it is expected that in the case $k'(p)\geq 0$ for all $p$, one can show
the lower estimate $\gamma_i(s) \geq \min\big\{\gamma_i(0),
\gamma_i(1)\big\}$. However, it remains open what conditions are needed to find
$C>1$ such that the upper estimate $\gamma_i(s) \leq C \min\big\{\gamma_i(0),
\gamma_i(1)\big\}$ holds. 
\end{remark}

\section{Local contraction and global Lipschitz continuity}
\label{se:LocContrGlobLip}

\subsection{Local contraction property} 
\label{su:LocGlobStretch}

While the theory of Section \ref{su:InfinitesimalStretch} gives estimates for
the stretching of tangent vectors via a bound $ \bfL^\mathrm{inf}(E) $ on the
infinitesimal sublevel stretching rate, we now want to obtain contraction
estimates between different solutions. The principal strategy follows the
Eulerian calculus developed in \cite{OttVil00GITL, OttWes05ECCW, DanSav08ECDC}
using the geodesic distance $ \calD^k_N $ induced by the local metric tensor
$\bbG_N$.  For two solutions $ p^N (t)$ and $ \wt{p}^N (t)$, we choose a
geodesic curve $\gamma: [0,1]\ni s \mapsto \gamma(s) \in \bfP_N $ between the
two initial points $ p^N (0)$ and $ \wt{p}^N (0)$. Now we consider the curves
$S^N_t {\circ} \gamma: s \mapsto S^N_t(\gamma(s))$ generated by transporting
the initial geodesic $\gamma$ along the flow $(S^N_t)_{t \geq 0}$ of the
gradient system. Since $S^N_t\circ \gamma$ connects
$ p^N (t) = S^N_t( p^N (0))$ and $ \wt{p}^N (t)=S^N_t( \wt{p}^N (0))$, the
length of $ S^N_t\circ \gamma$ provides an upper bound for the geodesic
distance of $ \calD^k_N( p^N(t), \wt{p}^N(t)) $. Now, the change of the total
length of $ S^N_t\circ \gamma$ can be controlled by the local stretching rates.

However, the application of this argument needs the control of the stretching
rates for all $t$ and all $s$ along the points $S^N_t(\gamma(s))$. This leads
us to the definition of a \emph{local} and a \emph{global sublevel stretching rate}. 

The following definition uses the \emph{local geodesic energy level}
$\GE_N(\Delta, E) \geq E$, where $E > \inf{\calE_N}$ and $\Delta > 0$, which is
defined to be the minimal energy $\widetilde{E}$ such that any
two points $p_0^N, p_1^N \in \Sigma_N(E)$ with
$\calD^k_N(p_0^N, p_1^N) \leq \Delta$ can be connected by a geodesic $\gamma^N$
satisfying $\gamma^N(s) \in \Sigma_N(\wt{E})$ for all $s \in [0, 1]$. More
precisely,
\begin{multline*}
    \GE_N(\Delta, E) := \inf \Bigl\{
      \wt{E} \geq E \Bigm| \forall (p_0^N, p_1^N) \in \Sigma_N(E) \times \Sigma_N(E) 
      \text{ with } \calD^k_N(p_0^N, p_1^N) \leq \Delta, \\
      \exists\; \text{geodesic } \gamma^N \text{ from } p_0^N \text{ to } p_1^N \; 
      \forall s \in [0, 1] : \gamma^N(s) \in \Sigma_N(\wt{E})
    \Bigr\}.
\end{multline*}
Since limits of geodesics are again geodesics and $\calE$ is lower
semicontinuous, one may see that in fact $\GE_N(\Delta, E)$ lies in the
above set, i.e.\ the infimum is attained. 

Clearly, $\Delta \mapsto \GE_N(\Delta, E)$ and $ E  \mapsto \GE_N(\Delta,
E)$ are nondecreasing, and
Lemma~\ref{le:loc.Delta} will show $\GE_N(\Delta, E) \to E$ as
$\Delta \searrow 0$. For the following definition recall that $E \mapsto
\Lambda_N^\mathrm{inf}(E)$ is nonincreasing. 

\begin{definition}[Local and global sublevel stretching rate]
\label{de:GlobSublevSR}
Let $\Delta > 0$ and $E > \inf{\calE_N}$. The \emph{local sublevel stretching
  rate} is defined to be
\begin{equation*}
  \Lambda_N^\mathrm{loc}(\Delta, E) := \Lambda_N^\mathrm{inf}\bigl( 
  \GE_N(\Delta, E)\bigr).
\end{equation*}
The map $\Delta \mapsto \Lambda_N^\mathrm{loc}(\Delta, E)$ is nonincreasing, so
the \emph{global sublevel stretching rate} is defined by
\begin{equation*}
  \Lambda^\mathrm{glo}_N(E) := \lim_{\Delta \to \infty} \Lambda_N^\mathrm{loc}(\Delta, E).
\end{equation*}
\end{definition}

\begin{example}[Local versus global stretching rate]
  \label{ex:LocGlobStretch} We consider the scalar gradient system
  $(\R,\calE,\mafo{id}_\R)$ with $\calE(u)=(u^2{-}1)^2/4$. The Hessian is
  $\bbH(u)=3u^2{-}1$. 

  For $E=1/9$, the sublevel $\Sigma(E)=[-\sqrt{5/3}, - \sqrt{1/3}]\cup
  [\sqrt{5/3}, \sqrt{1/3}] $ consists of two disjoint intervals, and we 
  have $\Lambda^\mafo{inf}(1/9)=0$. Hence, for $\Delta <2/\sqrt{3}$
  two points $u_0,u_1\in \Sigma(1/9)$ with $|u_1{-}u_0|\leq \Delta$ lie in
  the same interval, which implies $\GE(\Delta,1/9) = 1/9$, and 
  $\Lambda^\loc(\Delta,1/9)=0$ follows.  

  For $\Delta \geq 2/\sqrt3$ geodesics may connect points in different
  intervals and pass the point $u=0$ with $\calE(0)=1/4$. Thus, we
  find $\GE(\Delta,1/9)= 1/4$, obtain $ \Lambda^\loc(\Delta,1/9)=-1=\bbH(0)$,
  and arrive at $\Lambda^\glo(1/9) = -1$.
\end{example}

So far we are not able to control the global stretching rate. 
However, the following result shows that it is
possible to control the local sublevel stretching rate in such a way that for
all $E > \inf \calE$ there is a $\Delta > 0$ such that
$\Lambda^\loc_N(\Delta,E)$ can be bounded from below uniformly in $N$ in terms
of $ \bfL^\mathrm{inf}(E{+}1) $.

\begin{lemma}[Lower bound on $ \Lambda_N^\loc(\Delta_E, E)$]
\label{le:loc.Delta}
For all $E > \inf{\calE}$ we have
\begin{equation*}
  \lim_{\Delta \searrow 0} \GE_N(\Delta, E) = E.
\end{equation*}
In particular, for all $E > \inf{\calE}$ there exists $\Delta_E > 0$ such that
\begin{equation*}
  \GE_N(\Delta_E, E) < E + 1 \ \text{ and } \ 
  \Lambda_N^\loc(\Delta_E, E) \geq \bfL^\mathrm{inf}(E {+} 1) \quad 
  \text{ for all }N \in \bbN.
\end{equation*}
\end{lemma}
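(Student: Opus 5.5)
The plan is to control the energy along a short $\calD^k_N$-geodesic by a Grönwall argument in the geodesic parameter $s$, using the stress control in \eqref{eq:Cond.lambda.cvx}, and to obtain a bound uniform in $N$. The second assertion then follows: pick $\Delta_E$ with $\GE_N(\Delta_E,E)<E+1$ for all $N$, and use that $E\mapsto\Lambda_N^\mathrm{inf}(E)$ is nonincreasing together with Corollary~\ref{co:LowBddLambda}. This gives $\Lambda_N^\loc(\Delta_E,E)=\Lambda_N^\mathrm{inf}(\GE_N(\Delta_E,E))\geq\Lambda_N^\mathrm{inf}(E{+}1)\geq\bfL^\mathrm{inf}(E{+}1)$. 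So everything reduces to the first claim, in the uniform form
\[
\sup_N \GE_N(\Delta,E)\to E \quad\text{as }\Delta\searrow 0 .
\]

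Fix $p_0^N,p_1^N\in\Sigma_N(E)$ with $\calD^k_N(p_0^N,p_1^N)=D\leq\Delta$, and a constant-speed geodesic $\gamma$ written in the Hamiltonian form of the Remark. That form gives $\gamma_i'=k(\gamma_i)(\xi_i-\lambda)$ with $\int_\Omega k(\gamma)(\xi-\lambda)^2\dd x=D^2$ for every $s$. In each cell, the stress control gives
\[
\Bigl|\tfrac{\rmd}{\rmd s}W(\gamma_i)\Bigr|=|k(\gamma_i)W'(\gamma_i)|\,|\xi_i-\lambda|
\leq \bigl(B_1W(\gamma_i)+B_2+B_3(\gamma_i{-}1)\bigr)|\xi_i-\lambda| .
\]
The affine term in $\gamma_i$ is absorbed by adding a constant: coercivity \eqref{eq:W.coercive} yields $B_3(p{-}1)\leq W(p)+c_0$. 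Hence $w_i:=W(\gamma_i)+c$ satisfies $|w_i'|\leq B\,w_i|\xi_i-\lambda|$, and therefore
\[
w_i(s)\leq w_i(0)\exp\Bigl(B\int_0^s|\xi_i-\lambda|\dd\sigma\Bigr).
\]
The loading term $\int G^N\gamma\dd x$ is harmless: by \eqref{eq:Bh.He.L1} and Lemma~\ref{le:CompareDist} it changes by at most $\|G\|_{\rmL^\infty}\|\gamma(s)-p_0^N\|_{\rmL^1}\leq C\Delta$.

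The main obstacle is that $\int_0^1|\xi_i-\lambda|\dd s$ is only controlled in the weighted average $\int_\Omega k(\gamma)(\xi-\lambda)^2\dd x=D^2$. Pointwise in the cell index this gives a bound that degenerates where $k(\gamma_i)$ is small, and also depends on $N$. My first attempt is to split $\Omega$ (for the fixed geodesic) into cells where $\int_0^1|\xi_i-\lambda|\dd s\leq\eta$ and the rest. On the good cells the exponential factor is at most $\ee^{B\eta}$, so their contribution to $\calE$ is at most $\ee^{B\eta}$ times the endpoint energies plus $O(\eta)$. By Chebyshev and Cauchy--Schwarz the bad set $A_\eta$ has measure at most $\eta^{-2}\int\!\!\int(\xi-\lambda)^2$. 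To bound this by the given $D^2$ I must compare $(\xi-\lambda)^2$ with $k(\gamma)(\xi-\lambda)^2$, i.e.\ use a lower bound $\gamma\geq m>0$ on $A_\eta$ (since $k\geq\ul\kappa\,\gamma$). For this I would use the geodesic equations of the Remark, where $\lambda'\geq0$ when $k'\geq0$, to get a lower density bound $\gamma_i(s)\gtrsim\min\{\gamma_i(0),\gamma_i(1)\}$, in analogy with Lemma~\ref{le:DensBhatt}. The cells where both endpoint values are tiny have energy already controlled via the uniform integrability of $W(p_j^N)$ on $\Sigma_N(E)$. On $A_\eta$ itself I would use $W(\gamma)\leq C_1(W(p_0)+W(p_1))+C_2$ from \eqref{eq:W.doubling}, applicable once the density bounds hold. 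Combining with the uniform integrability of $W(p_j^N)$ then makes this contribution small as $\Delta\to0$ with $\eta=\Delta^{1/2}$.

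I expect this density-bound step for $\calD^k_N$-geodesics to be the genuinely delicate point, since the Remark flags it as open in general. If it fails, the fallback is to replace the $\calD^k_N$-geodesic by the Bhattacharya geodesic of Lemma~\ref{le:Bhatt.PN.Complete}, which stays in $\bfP_N$, satisfies \eqref{eq:EstBhGeo}, and has $\calD^k_N$-length at most $C_\mafo{upp}\Bh$. For that curve the above argument goes through cleanly with explicit density bounds.
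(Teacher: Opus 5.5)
Your reduction of the second assertion to the $N$-uniform statement $\sup_N\GE_N(\Delta,E)\to E$ is correct, and uniformity is indeed what Proposition~\ref{pr:ComparApprox} needs. For fixed $N$ your Gr\"onwall bound closes directly. On $\Sigma_N(E)$ one has $p_i\geq\delta_{N,E}$, and each cell satisfies $\frac1N k(\gamma_i)(\xi_i-\lambda)^2\leq D^2$. So for $D$ small depending on $N,E$ the geodesic stays away from $0$ and every cell is good.

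The uniform argument, however, has genuine gaps.

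First, the bad-set step needs density bounds for $\calD^k_N$-geodesics. You need a lower one to bound $|A_\eta|$, and two-sided ones to invoke \eqref{eq:W.doubling}. The Remark in Section~\ref{su:Def.calD.k} records these as open: the lower bound is only conjectured for $k'\geq0$, and no sign of $k'$ is assumed here.

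Second, even granting these bounds, the smallness of the contribution of $A_\eta$ rests on ``uniform integrability of $W(p^N_j)$ on $\Sigma_N(E)$''. This is false. A sublevel only bounds $\int_\Omega W(p)\dd x$, and the energy may concentrate on sets of arbitrarily small measure (e.g.\ $p$ with $W(p)\approx1/\eps$ on a set of measure $\eps$). Since $\GE_N(\Delta,E)$ is a worst case over all admissible pairs, \eqref{eq:W.doubling} only bounds that contribution by a multiple of the endpoint energies on $A_\eta$, which need not be small. Besides, \eqref{eq:W.doubling} is not a hypothesis of this lemma. A minor point: for $B_3>0$, absorbing $B_3(p{-}1)$ into $W(p)+c_0$ requires linear growth of $W$, which \eqref{eq:W.coercive} does not give.

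Third, the fallback says nothing about $\GE_N$. By definition $\GE_N$, and hence $\Lambda^\loc_N$, is computed along $\calD^k_N$-geodesics, and a Bhattacharya geodesic is not one unless $k$ is linear. That route is the one of Theorem~\ref{th:UnifLipschitz}. It only yields Lipschitz bounds with the factor $C_\mafo{upp}/C_\mafo{low}$, not the contraction estimate this lemma feeds into.

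The missing idea is to use second-order rather than first-order information along the geodesic. Let $\gamma$ be a constant-speed $\calD^k_N$-geodesic of length $D$ inside $\Sigma_N(\wt E)$. Then $f(s):=\calE_N(\gamma(s))$ satisfies $f''(s)=\langle\wt\bbH_N(\gamma(s))\gamma'(s),\gamma'(s)\rangle\geq\Lambda^{\mathrm{inf}}_N(\wt E)\,D^2$. Hence $f(0),f(1)\leq E$ gives $f\leq E+\frac18\max\{0,-\Lambda^{\mathrm{inf}}_N(\wt E)\}D^2$. The paper applies this with $\wt E=\GE_N(1,E)$ and lets $\Delta\searrow0$; no density bounds, no stress control along $\gamma$, and no doubling are needed. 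First-order information is ill-suited here: $|f'|\leq D\,\langle\rmD\calE_N,\bbK_N\rmD\calE_N\rangle^{1/2}$ involves the slope, which sublevels do not control. This is exactly the degeneracy you ran into.

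Be aware that, as written, the paper chooses $\Delta_E$ through $\Lambda^{\mathrm{inf}}_N(\GE_N(1,E))$. Its $N$-independence therefore needs a lower bound on that quantity uniform in $N$ (e.g.\ $\sup_N\GE_N(1,E)<\infty$), which is not shown there. The same second-order idea can supply this. Since $\bfL^{\mathrm{inf}}$ is affine and nonincreasing in the energy level, $f''\geq\bigl(a-b\max\{f,E\}\bigr)D^2$ with $a$ and $b\geq0$ independent of $N$. On each interval where $f>E$, use a comparison argument with the positive weight $\cos\bigl(\sqrt b\,D(s-\bar s)\bigr)$, where $\bar s$ is the interval's midpoint and $\sqrt b\,D<\pi$. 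This gives $f\leq E+O(D^2)$ uniformly in $N$, as long as the geodesic stays in $\bfP_N$.
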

\begin{proof}
Note that for all $\Delta > 0$ we have $\GE_N(\Delta, E) \leq E^*_N< \infty$,
because $E_N^*:=\sup_{p \in \bfP_N} \calE_N(p) < \infty$ and geodesics exist
between any two $p_0^N, p_1^N \in \bfP_N$. Set $\wt{E} := \GE_N(1, E)$. By
semiconvexity of the energy $\calE_N$ along geodesics, we have that if $\gamma$
is a (constant-speed) geodesic in $(\bfP_N, \calD^k_N)$ wholly contained in the
sublevel $\Sigma_N(\wt{E})$, then
\begin{equation*}
  \calE_N(\gamma(s)) \leq (1 {-} s)\calE_N(\gamma(0)) + s \calE_N(\gamma(1)) 
  - \frac{1}{2} s(1{-} s) \Lambda_N^\mathrm{inf}(\wt{E}) 
   \calD^k_N(\gamma(0), \gamma(1))^2.
\end{equation*}
Thus, assuming $\gamma(0), \gamma(1) \in \Sigma_N(E)$, we find the estimate
\begin{equation*}
  \calE_N(\gamma(s)) \leq E + \frac{1}{8}\max\bigl\{ 0, -\Lambda^\mathrm{inf}_N
    (\wt{E}) \bigr\} \,\calD^k_N(\gamma(0), \gamma(1))^2.
\end{equation*}
It follows that for $\Delta \in (0, 1)$,
\begin{equation*}
  \GE_N(\Delta, E) \leq E + \frac{1}{8} \max\bigl\{ 0, -\Lambda^\mathrm{inf}_N
  (\wt{E}) \bigr\}  \Delta^2 \to E \; \text{ as } \; \Delta \searrow 0,
\end{equation*}
as required. The remaining statements are a consequence of the limit and
Corollary~\ref{co:LowBddLambda}. In particular, we can choose any
$\Delta_E>0$  such that $\frac18 \max\bigl\{ 0, -\Lambda^\mathrm{inf}_N
  (\wt{E}) \bigr\}  \Delta^2_E \leq 1$. 
\end{proof}

\subsection{Semigroup with local sublevel contraction property}
\label{su:SGLocSublContract}

In this section we prove the strong convergence of the solutions
$t \mapsto p^N(t)$ to a limit function $t \mapsto p(t)$ and that the limit
depends continuously on the initial data. The main technical result is the
following.

\begin{proposition}[Comparison of approximations]
\label{pr:ComparApprox}
Let the assumptions \eqref{eq:k(p)bounds} and \eqref{eq:k.W.conds} 
hold and consider $p_0, p_1 \in \Sigma(E) \subset \bfP$.  Then, there exists a
$\Delta>0$ and $N_*\in \N$ such that for all $N, M \geq N_*$ the following
holds. If $\calD^k_{NM}(\bbP_N p_0,\bbP_M p_1)\leq \Delta$, then the unique
solutions $t \mapsto p_0^N(t) \in \bfP_N $ and $t \mapsto p_1^M(t) \in \bfP_M $
of \eqref{eq:DiscrODE} with initial conditions $p_0^N(0)=\bbP_N p_0$ and
$p_1^M(0)= \bbP_M p_1$ satisfy, for all $t>0$, the estimate
\[
\calD^k_{NM} \big( p^N_0(t),p^M_1(t) \big) 
\leq  \ee^{-\bfL^\mathrm{inf}(E+2) \,t} \,\calD^k_{NM} \big( p^N_0(0),p^M_1(0) \big)
      + \mathsf M_{\bfL^\mathrm{inf}(E+2) }(t) \sqrt{\,\ol \kappa\,}\, 
              \big\| G^N - G^M \big\|_{\rmL^\infty},
\]
where $\bfL^\mathrm{inf}(E{+}2)$ is defined in Corollary~\ref{co:LowBddLambda} and
$\mathsf M_\lambda$ is defined in Proposition~\ref{pr:Growth.yN}.
\end{proposition}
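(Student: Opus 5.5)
The plan is to work in the common finite-dimensional space $\bfP_{NM}$, which contains both $\bfP_N$ and $\bfP_M$, and to use the Eulerian argument sketched at the beginning of Section~\ref{su:LocGlobStretch}.

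\emph{Embedding both flows into one system.} Both $p_0^N(t)$ and $p_1^M(t)$ are solutions of the $NM$-dimensional gradient system, but with different loadings: the first with $G^N$, the second with $G^M$. Both loadings are step functions on the grid $1/(NM)$. Indeed, $\bbK_{NM}$ restricted to $\bfP_N$ coincides with $\bbK_N$, and $\rmD\calE$ with loading $G^N$ is piecewise constant on the coarse cells. We therefore interpolate the loading by $G^{(s)}:=(1{-}s)G^N + sG^M$. We also choose a geodesic $\gamma:[0,1]\to\bfP_{NM}$ for $\calD^k_{NM}$ from $\bbP_Np_0$ to $\bbP_Mp_1$, which exists by compactness.

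\emph{Transported curve and its length.} Let $\Gamma(t,s)$ solve \eqref{eq:DiscrODE} in $\bfP_{NM}$ with loading $G^{(s)}$ and initial datum $\gamma(s)$. Then $y(t,s):=\pl_s\Gamma(t,s)$ solves \eqref{eq:ODE.Tang} with $\zeta = G^M-G^N$, which is a step function on the fine grid. Proposition~\ref{pr:Growth.yN} with $NM$ in place of $N$ then gives, for each $s$,
\[
\norm{y(t,s)}{\bbG_{NM}} \leq \ee^{-\bfL^\mathrm{inf}(\wt E)t}\norm{\pl_s\gamma(s)}{\bbG_{NM}} + \mathsf M_{\bfL^\mathrm{inf}(\wt E)}(t)\sqrt{\ol\kappa}\,\|G^N{-}G^M\|_{\rmL^\infty},
\]
provided all $\Gamma(t,s)$ stay in a sublevel at level $\wt E$ of the energy with loading $G^{(s)}$. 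Here the bound of Corollary~\ref{co:LowBddLambda} holds uniformly in the grid and in the loading, since it only uses $\|G^{(s)}\|_{\rmL^\infty}\leq\|G\|_{\rmL^\infty}$. Integrating over $s\in[0,1]$ and using the constant speed of $\gamma$, namely $\int_0^1\norm{\pl_s\gamma}{\bbG}\dd s=\calD^k_{NM}(\gamma(0),\gamma(1))$, bounds the length of $\Gamma(t,\cdot)$. This length dominates $\calD^k_{NM}(p_0^N(t),p_1^M(t))$, which gives the claimed estimate with $\wt E = E{+}2$.

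\emph{Energy control (main obstacle).} We must show that $\gamma(s)$ lies in the sublevel $E{+}2$ for the loading $G^{(s)}$. The energy decreases along each flow, so this then persists for all $t$. Three contributions have to be controlled.
\begin{enumerate}
\item By Lemma~\ref{le:WellPrepIC}, for $N,M\geq N_*$ we have $\calE(\bbP_Np_0),\calE(\bbP_Mp_1)\leq E+\tfrac12$.
\item Changing the loading costs at most $\|G^{(s)}-G\|_{\rmL^\infty}\leq \sup_{L\geq N_*}\|G^L{-}G\|_{\rmL^\infty}$, since $\int p=1$. This is small by uniform continuity of $G$, after enlarging $N_*$.
\item Along the geodesic, $\calE$ exceeds the endpoint values by at most $\frac18\max\{0,-\Lambda^\mathrm{inf}_{NM}(\wt E)\}\Delta^2$. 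This follows from the semiconvexity argument of Lemma~\ref{le:loc.Delta}, using the $N$-independent bound $\bfL^\mathrm{inf}(E{+}2)$.
\end{enumerate}
Choosing $\Delta$ with $\frac18\max\{0,-\bfL^\mathrm{inf}(E{+}2)\}\Delta^2\leq\tfrac12$ then keeps $\gamma$ inside $\Sigma(E{+}2)$.

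The delicate point is circularity. The semiconvexity bound presupposes that the geodesic already lies in the sublevel $E{+}2$. I would resolve this with a continuity argument in $\Delta$: the maximal energy along geodesics depends on the endpoints in a controlled way (compare $\GE_{NM}$), and it cannot jump above $E{+}2$ without first exceeding $E{+}1.5$, which the estimate forbids. I expect this step, and making it uniform in $N,M$ as well as in the loading parameter $s$, to require the most care.
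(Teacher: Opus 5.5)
Your construction is the paper's. You view both flows in $\bfP_{NM}$, interpolate the loading as $(1{-}s)G^N+sG^M$, and transport a $\calD^k_{NM}$-geodesic. You then differentiate in $s$ to get \eqref{eq:ODE.Tang} with $\zeta=G^M-G^N$, apply Proposition~\ref{pr:Growth.yN} for each $s$, and integrate. Your bookkeeping of the loading is more careful than the paper's, which applies Proposition~\ref{pr:Growth.yN} to these modified flows without comment. You note that the flow with loading $G^{(s)}$ only decreases the energy with that loading, which differs from $\calE$ by at most $\|G^{(s)}-G\|_{\rmL^\infty}$, and that Corollary~\ref{co:LowBddLambda} is uniform in the loading.

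\textbf{The gap.} It is the one you flag: energy control along $\gamma$, uniformly in $N,M$. The paper does not argue this inside the proof. It invokes Lemma~\ref{le:loc.Delta} at level $E{+}1$, which asserts a $\Delta=\Delta_{E+1}$ independent of $N,M$ with $\GE_{NM}(\Delta,E{+}1)<E{+}2$.

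\textbf{Why your continuity argument fails.} Your substitute, a continuity argument in $\Delta$, does not work as stated. Geodesics in $(\bfP_{NM},\calD^k_{NM})$ need not be unique. Hence the maximal energy along a chosen geodesic need not depend continuously on the endpoints, and $\Delta\mapsto\GE_{NM}(\Delta,E)$ is only known to be monotone. Nothing then excludes a jump from below $E{+}\tfrac32$ to above $E{+}2$. Arguing along a single geodesic does not help either. Set $g(s):=\calE(\gamma(s))$; a lower bound on $g''$ known only where $g\le E{+}2$ is compatible with $g$ leaving $\Sigma(E{+}2)$ steeply and coming back, since the endpoint slopes are not controlled by the energy.

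\textbf{A fix.} Use the pointwise Hessian bound at \emph{every} level, together with the fact that $\bfL^{\mathrm{inf}}(e)=-a-be$ is affine with $N$-independent $a\in\R$ and $b\ge 0$ (Corollary~\ref{co:LowBddLambda}). Along a constant-speed geodesic in $\bfP_{NM}$ of length $D$ this gives $g''+bD^2g\ge -aD^2$ on $[0,1]$. For $\sqrt b\,D<\pi$ the maximum principle bounds $g$ by the solution $h$ of $h''+bD^2h=-aD^2$ with boundary values $\max\{g(0),g(1)\}$. The maximum of $h$ tends to that boundary value as $D\to0$, uniformly in $N,M$. This yields the required $\Delta$ without circularity.

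\textbf{On Lemma~\ref{le:loc.Delta}.} Its proof breaks the circularity differently, through the a priori level $\wt E=\GE_N(1,E)$. The $\Delta_E$ it produces therefore depends on $\Lambda^{\mathrm{inf}}_N(\GE_N(1,E))$, and so a priori on $N$. Your worry about uniformity is legitimate.
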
 
\begin{proof}
By Lemma \ref{le:WellPrepIC} we see that there exists $N_* \in \bbN$ such that 
$\calE_N(\bbP_N p_j)\leq E{+}1$ for $N \geq N_*$ and $j=0,1$. Applying Lemma
\ref{le:loc.Delta} with $E$ replaced by $E{+}1$, we find
$\Delta = \Delta_{E+1}$ such that $\GE_{NM}(\Delta, E{+}1) < E+2$ and
$\Lambda^\loc_{NM}(\Delta,E{+}1)\geq \bfL^\mathrm{inf}(E{+}2)  $.

By construction of $\Delta$ and the assumption 
$\calD^k_{NM}(\bbP_N p_0,\bbP_M p_1)\leq \Delta $ we now find a geodesic curve
$\gamma$ in $\big( \bfP_{NM}, \calD^k_{NM}\big)$ such that 
\[
\gamma(0)=\bbP_N p_0 =p^N_0(0), \quad 
 \gamma(1)=\bbP_M p_1 =p^M_1(0), \quad 
\gamma(s) \in \Sigma_{NM}(E{+}2) \text{ for } s\in [0,1].
\]
For $s \in  [0,1]$ we now consider the gradient-flow solutions associated with 
the gradient system $\big(\bfP_{NM}, \calE^s_{N,M},\bbK_{NM}\big)$ with 
initial data $ % Added "the gradient system" to get rid of overfull hbox.
p^{NM}_s(0)= \gamma(s)$, where  
\[
\calE^s_{N,M}(p^{NM}):= \int_\Omega \Big( W\big(p^{NM}(x)) -\big(
(1{-}s) G^N(x) + s G^M(x)\big) p^{NM}(x) \Big) \dd x.
\]
Thus, the associated gradient-flow equation reads 
\begin{equation}
  \label{eq:ParamODE}
   \dot p_s^{NM} = -
\bbK_{NM}(p_s^{NM}) \big( W'(p_s^{NM}) - (1{-}s) G^N - s G^M \big), \quad 
p^{NM}_s(0)= \gamma(s) . 
\end{equation}
The importance is that for $s=0$ one obtains exactly the solution $p_0^N$
and for $s=1$ the solution $ p_1^M$. 

The parametrized ODE (of dimension $NM{-}1$) together with the initial
conditions depends smoothly on the parameter $s \in [0,1]$. Hence, we can
differentiate the equation with respect to $s\in [0,1]$ and obtain
$y_s^{NM}(t) = \pl_s p^{NM}_s(t)$ such that for each $s\in [0,1]$ the pair
$ \big( p^{NM}_s, y_s^{NM}\big) $ solves \eqref{eq:ODE.Tang}, where $N$ is
replaced by $NM$ and $\zeta = G^M-G^N\in \bfT^*_{NM}$.

Thus, for each $s\in [0,1]$ we can apply  
Proposition \ref{pr:Growth.yN} and use that for $t=0$ we have 
\[
\calD^k_{NM} \big( p^{NM}_0(0),p_1^{NM}(0)\big) 
=\calD^k_{NM} \big( \bbP_N p_0,\bbP_M p_1\big) 
= \int_0^1 \norm{ \pl_s\gamma }{\bbG_{NM} } \dd s  
= \int_0^1 \norm{ y^{NM}_s (0)}{\bbG_{NM}}  \dd s, 
\]
whereas for $t>0$ we have the estimate   
\[
\calD^k_{NM} \big( p_0^N(t),p_1^M(t)\big) 
= \calD^k_{NM} \big( p_0^{NM}(t),p_1^{NM}(t)\big) \leq 
 \int_0^1 \norm{ y^{NM}_s (t)}{\bbG_{NM}}  \dd s
\]
because the smooth curve $s \mapsto p^{NM}_s(t)$ connects 
$ p^{NM}_0(t)= p_0^N(t)$ and $ p^{NM}_1(t)= p_1^M(t)$ in $\bfP_{NM}$. 
Combining this with estimate \eqref{eq:Growth.yN} the desired result
follows. 
\end{proof}

The above result can be used in two ways. First we are able to construct a
unique solution $p(t)=S_t(p_0)$ for all $p_0\in \dom(\calE)= \bigset{p \in
  \bbP}{\calE(p)< \infty } $. Second, we can
conclude that the global semigroup $\big(S_t)_{t\geq 
  0}$ defined on $\dom(\calE)$ has the local sublevel contraction property in
the metric space $(\bfP,\calD^k)$. 

\begin{theorem}[Global semiflow with local contraction]
\label{th:SGLocContract}
Assuming \eqref{eq:k(p)bounds} and \eqref{eq:k.W.conds} hold, 
the global semigroups
$ \big( S^N_t \big)_{t\geq 0} $ on $\dom(\calE_N)$ constructed in Proposition
\ref{pr:ODE.GlobExi} converge to a global semiflow $\big( S_t\big)_{t\geq 0}$
on $\dom(\calE)$ in the sense that
\[
\bfP_N \ni p_0^N \to p_0 \in \bfP  \quad \Longrightarrow \quad 
\forall\, t\geq 0:\  S^N_t(p_0^N) \to S_t(p_0), 
\]
where both convergences are meant strongly in $\rmL^1(\Omega)$. The semigroup
$\big(S_t \big)_{t\geq 0}  $ is a local sublevel contraction in $\big( \bfP,
\calD^k\big)$, namely for all $E \geq \inf\calE$ there exists $\wt\Delta > 0$ 
such that  
\begin{equation}
  \label{eq:SG.LocSublContr}
\begin{aligned}  p_0,p_1\in \Sigma(E)& \text{ with } 
\calD^k( p_0,p_1) \leq \wt\Delta  \quad \Longrightarrow \quad 
\\
& \calD^k\big( S_t(p_0), S_t(p_1)\big) \leq
\ee^{-\bfL^\mathrm{inf}(E+2)\,t} \, \calD^k( p_0,p_1) \ \text{ for all } t\geq 0,
\end{aligned}
\end{equation}
Moreover, the energy is a Lyapunov function, i.e.\ $\calE( S_t(p_0))\leq
\calE(S_r(p_0))$ for all $t\geq r\geq 0$. 
\end{theorem}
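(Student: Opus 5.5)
The plan is to use Proposition~\ref{pr:ComparApprox} as the main tool. First we show that the discrete solutions form Cauchy sequences, then we pass to the limit in the local contraction estimate. Fix $p_0\in\Sigma(E)$ and apply Proposition~\ref{pr:ComparApprox} with $p_1=p_0$. Since $\bbP_N p_0\to p_0$ in $\rmL^1$, Lemma~\ref{le:CompareDist} and \eqref{eq:Bh.He.L1} give
\[
\calD^k_{NM}(\bbP_Np_0,\bbP_Mp_0)\leq C_\mafo{upp}\Bh(\bbP_Np_0,\bbP_Mp_0)\leq C\|\bbP_Np_0-\bbP_Mp_0\|_{\rmL^1}^{1/2}\to 0 .
\]
Hence the smallness condition $\calD^k_{NM}\leq\Delta$ holds for $N,M\geq N_*$ (after enlarging $N_*$). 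Since $G\in\rmC^0(\ol\Omega)$ is uniformly continuous, $\|G^N-G^M\|_{\rmL^\infty}\to0$. The estimate of Proposition~\ref{pr:ComparApprox} then yields $\calD^k_{NM}(S^N_t\bbP_Np_0,S^M_t\bbP_Mp_0)\to0$, locally uniformly in $t$. By Lemma~\ref{le:CompareDist} (with $NM$ in place of $N$) and \eqref{eq:Bh.He.L1}, $(S^N_t\bbP_Np_0)_N$ is Cauchy in $\rmL^1(\Omega)$. Its limit $S_t(p_0)$ lies in $\bfP$: it is nonnegative with unit mass, and positivity a.e.\ follows from lower semicontinuity of $\calE$ together with the energy bound $\calE_N\leq E+1$ and the coercivity \eqref{eq:W.coercive}. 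The same lower semicontinuity gives $\calE(S_t p_0)\leq\liminf\calE_N(S^N_t\bbP_Np_0)\leq E+1$.

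Next I remove the restriction to the special approximations $\bbP_Np_0$, so that the convergence holds for general $p^N_0\to p_0$. For this I rerun the argument of Proposition~\ref{pr:ComparApprox} with initial data $p^N_0$ and $\bbP_Np_0$, both in $\bfP_N$. This requires an energy bound on $p_0^N$. That is a gap in the statement, which only assumes $L^1$ convergence; I would interpret it as requiring $\sup_N\calE_N(p^N_0)<\infty$, or $\calE_N(p_0^N)\to\calE(p_0)$. Under that bound, once $\calD^k_N(p^N_0,\bbP_Np_0)$ is small (which holds by the $\Bh$–$\rmL^1$ estimates), the contraction estimate gives $\calD^k_N(S^N_tp^N_0,S^N_t\bbP_Np_0)\leq \ee^{-\bfL^\mathrm{inf}t}\calD^k_N(p^N_0,\bbP_Np_0)\to0$. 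The semigroup property and continuity of $t\mapsto S_t(p_0)$ are inherited from $S^N$ by these limits, using uniformity of the convergence on compact time intervals. For the Lyapunov property, take $r\leq t$ and apply the Cauchy argument to the restarted data: $S_t p_0=\lim S^N_{t-r}(S^N_r\bbP_Np_0)$, so lower semicontinuity together with the energy convergence of the next step gives $\calE(S_tp_0)\leq\calE(S_rp_0)$. Alternatively, proceed directly: $\calE(S_tp_0)\leq\liminf\calE_N(S^N_t\bbP_Np_0)\leq\liminf\calE_N(S^N_r\bbP_Np_0)$. This reduces the claim to energy convergence at time $r$, which is the delicate point. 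At $r=0$ it is given by Lemma~\ref{le:WellPrepIC}, and for general $r$ I would postpone it to the energy-convergence result in Section~\ref{se:CharSemiflows}, or, if that is not allowed, argue via the semiflow property starting from data at time $r$ approximated by $\bbP_N S_r p_0$.

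For \eqref{eq:SG.LocSublContr}, take $p_0,p_1\in\Sigma(E)$ with $\calD^k(p_0,p_1)\leq\wt\Delta:=\Delta/2$. By the definition \eqref{eq:Def.calD.k} we have $\calD^k_{M!}(\bbP_Np_0,\bbP_Np_1)\to\calD^k(p_0,p_1)$ along the double limit. Apply Proposition~\ref{pr:ComparApprox} on the grid $\bfP_{M!}$, with both initial data $\bbP_Np_j$ regarded as elements of $\bfP_{M!}$ and a common load $G^{M!}$, so that the inhomogeneous term vanishes. This gives
\[
\calD^k_{M!}\bigl(S^{M!}_t\bbP_Np_0,S^{M!}_t\bbP_Np_1\bigr)\leq \ee^{-\bfL^\mathrm{inf}(E+2)t}\,\calD^k_{M!}(\bbP_Np_0,\bbP_Np_1).
\]
Let $M\to\infty$ and then $N\to\infty$. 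On the right-hand side this yields $\ee^{-\bfL^\mathrm{inf}(E+2)t}\calD^k(p_0,p_1)$. On the left-hand side, $S^{M!}_t\bbP_Np_j$ converges as $M\to\infty$ to the limit flow started from $\bbP_Np_j$, by the first two steps. The main obstacle is identifying the limit of the left-hand side with $\calD^k(S_tp_0,S_tp_1)$, because $\calD^k$ is defined only through projections. I would handle it with the triangle inequality for $\calD^k_{M!}$ and the uniform Lipschitz bound $\calD^k_{M!}\leq C_\mafo{upp}\Bh\leq C\|\cdot\|_{\rmL^1}^{1/2}$, which lets me replace $S^{M!}_t\bbP_Np_j$ by $\bbP_{N'}S_tp_j$ up to errors that vanish in the iterated limit. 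This is the same Cauchy argument as in the proof of Proposition~\ref{pr:def.calD.k}.
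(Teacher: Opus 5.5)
Your proposal is correct. Its first step, the Cauchy property of $S^N_t(\bbP_Np_0)$ via Proposition~\ref{pr:ComparApprox} with $p_1=p_0$, is exactly the paper's. The contraction step takes a different route.

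\textbf{The paper's route.} It keeps the coarse trajectories $S^N_t(\bbP_Np_j)\in\bfP_N\subset\bfP_{M!}$ and views them as solutions on $\bfP_{M!}$ with load $G^N$. It then transports a $\calD^k_{M!}$-geodesic by that flow. Since these trajectories do not depend on $M$, the limit $M\to\infty$ is immediate. The limit $N\to\infty$ then needs only the convergence from step one and the $\rmL^1$-continuity of $\calD^k$. The price is that the stretching estimate on $\bfP_{M!}$ is applied to the energy $\int_\Omega (W(p)-G^Np)\dd x$. This differs from $\calE_{M!}$ by $O(\|G-G^N\|_{\rmL^\infty})$, a point the paper passes over silently.

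\textbf{Your route.} You run the genuine fine-grid flow $S^{M!}$ with the common load $G^{M!}$. Lemma~\ref{le:loc.Delta} and Corollary~\ref{co:LowBddLambda} then apply verbatim with $\zeta=0$. The price is that your trajectories depend on $M$. Identifying the iterated limit of the left-hand side therefore needs, besides convergence for non-projected data, the continuity $S_t(\bbP_Np_j)\to S_t(p_j)$ as $N\to\infty$. You use this but never state it. It follows from the same tool: compare $S^{M!}_t(\bbP_Np_j)$ with $S^{M!}_t(\bbP_{M!}p_j)$ on the grid $M!$ and let $M\to\infty$. This gives
\[
\Bh\bigl(S_t(\bbP_Np_j),S_t(p_j)\bigr)\leq C\,\ee^{-\bfL^\mathrm{inf}(E+2)t}\,\|\bbP_Np_j-p_j\|_{\rmL^1}^{1/2}.
\]

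\textbf{Your additions.} Positivity of the limit, the uniform energy bound needed for general data $p^N_0$, and the semigroup property make explicit what the paper dismisses as standard. The paper's proof does not address general data $p^N_0$ at all, so flagging the missing energy bound is fair. For the Lyapunov property, use your restart argument:
\[
\calE(S_tp_0)\leq\liminf_N\calE_N\bigl(S^N_{t-r}(\bbP_NS_rp_0)\bigr)\leq\lim_N\calE_N(\bbP_NS_rp_0)=\calE(S_rp_0),
\]
using Lemma~\ref{le:WellPrepIC}. Do not defer to the energy convergence in Theorem~\ref{th:CurvesMaxSlope}: that would import the extra hypothesis \eqref{eq:Monotone}, which the present theorem does not assume.
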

\begin{proof}
We fix an energy level $E > \inf \calE$ and apply Proposition
\ref{pr:ComparApprox} first with $p_1=p_0 \in \Sigma(E)$. Because of
$\bbP_N p_0 \to p_0$ in $\rmL^1(\Omega)$ we have $\calD^k_{NM}(\bbP_N p_0 , 
\bbP_M p_0) \leq  \Delta $ for sufficiently large $N,M$, where $\Delta
> 0$ is as in Proposition~\ref{pr:ComparApprox}.  For 
$p^N(t)=S^N_t(\bbP_N p_0)$ we hence obtain
\[
  \Bh(p^N(t), p^M(t)) \leq \frac1{C_\mafo{low}} \calD^k_{NM} \big( p^N(t),
  p^M(t) \big)   \to 0\quad \text{as } N,M\to \infty,
\]
because of $\calD^k_{NM} \big( \bbP_N p_0,\bbP_M p_0)\to 0$ and $\| G^N
{-}G^M  \|_{\rmL^\infty}\to 0$. 
Thus, the limit $ S_t(p_0) := \lim_{N\to \infty} S^N_t(\bbP_N p_0) = \lim_{N\to
  \infty} p^N(t)$ exists in $ \bfP \subset \rmL^1(\Omega)$. 

For the contraction property, set
$ \wt\Delta := C_\mafo{low} \Delta/(2C_\mafo{upp}) $ and choose
$p_0,p_1\in \Sigma(E) $ with $ \calD^k(p_0,p_1) \leq \wt\Delta$. Then, for $N$
sufficiently large, Lemma~\ref{le:CompareDist} and
Proposition~\ref{pr:def.calD.k} imply
$\calD^k_{N^2} \bigl( \bbP_N p_0,\bbP_N p_1 \bigr) \leq \Delta $. For such $N$,
the estimate in Proposition \ref{pr:ComparApprox} (with $M = N$) is valid. 
Moreover, we can fix the initial conditions $\bbP_N p_0$ and $\bbP_N p_1$ and
use the estimate in the metric space $ \big( \bfP_{M!}, \calD^k_{M!}\big) $ for
all $M\geq N$ giving the estimate
\[
\calD^k_{M!} \big( S^N_t(\bbP_N p_0),  S^N_t(\bbP_N p_1) \big) 
\leq \ee^{- \bfL^\mathrm{inf}(E+2) \, t}\, \calD^k_{M!} 
 \big( \bbP_N p_0,  \bbP_N p_1 \big) \quad \text{for all } t\geq 0.
\]
Passing to the limit $M\to \infty$ and then to $N\to \infty$,  we recover the 
desired estimate \eqref{eq:SG.LocSublContr}, where we exploit the definition of
$\calD^k$ in \eqref{eq:Def.calD.k}. 

The remaining properties of $ ( S_t )_{t\geq 0}$ follow in a standard way. 
\end{proof}

In cases where we have a uniform contraction rate $ \bfL^\mathrm{inf}(E) 
= \bfL^\mathrm{inf}_*  $ that is
independent of the energy bound $E$ of the sublevels $\Sigma(E)$, one can
extend the semigroup to the closure $\ol{\dom(\calE)}$ of the domain by uniform
continuity. This is no longer the case in our situation where the the
continuity properties of $(S_t)_{t\geq 0}$ deteriorate for sequences
$ (p_n)_{n \in \bbN} \subset \dom(\calE)$ with $\calE(p_n)\to \infty$.

\subsection{Global Lipschitz estimates on sublevels using \texorpdfstring{$\Bh$}{Bh}} 
\label{su:Lipschitz}

We now want to provide global Lipschitz estimates. Recall that all the above
estimates are contraction estimates which means that we have a constant
``\,1\,'' in front of the exponential $\ee^{-\bfL^\mathrm{inf}(E+2) \,t}$. Of
course, we typically have $ \bfL^\mathrm{inf}(E{+}2) <0$ such that we should
rather speak of an expansion. In contrast we now allow for a constant $C>1$ in
front of the exponential which will still give us Lipschitz estimates.

This constant will arise because we want to switch between the distances
$\calD^k_N$ and the Bhattacharya distance $\Bh$. The reason is that for $\big(
\bfP, \Bh\big)$ we have enough information on the geodesic curves 
to control the energy $\calE$ along these curves. The explicit formula for the
Bhattacharya geodesics and assumption \eqref{eq:W.doubling} for $W$ lead to the
following result. 

\begin{corollary}[Energy bound for geodesics in $(\bfP,\Bh)$]
\label{co:EnergyBhGeod} Assume \eqref{eq:W.doubling} holds. Then, for all
$p_0,p_1\in \Sigma_N(E)$ the unique Bhattacharya geodesic $\gamma^\Bh:[0,1]\to
\bfP_N$ lies completely in $\Sigma_N(\wt E)$ with 
\[
\wt E=  2 C_1 E + C_2 +(2C_1{+}1)\|G\|_{\rmL^\infty} .
\]
\end{corollary}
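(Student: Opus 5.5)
The plan is to combine the pointwise density bounds of Lemma~\ref{le:DensBhatt} with the doubling assumption \eqref{eq:W.doubling}, integrate in $x$, and control the loading term using $\int_\Omega p\dd x=1$. By Lemma~\ref{le:Bhatt.PN.Complete} the geodesic $\gamma^\Bh$ between $p_0,p_1\in\bfP_N$ stays in $\bfP_N$. So it suffices to bound $\calE(\gamma^\Bh(s))$ for every $s\in[0,1]$.

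First, fix $s$ and a.e.\ $x$. Lemma~\ref{le:DensBhatt} gives $\gamma^\Bh(s,x)\in\bigl[\min\{p_0(x),p_1(x)\},\,2\max\{p_0(x),p_1(x)\}\bigr]$. This is exactly the interval appearing in \eqref{eq:W.doubling}, so
\[
W(\gamma^\Bh(s,x))\leq C_1\bigl(W(p_0(x))+W(p_1(x))\bigr)+C_2 .
\]
Integrating over $\Omega$, which has length $1$, yields $\int_\Omega W(\gamma^\Bh(s))\dd x\leq C_1\bigl(\int_\Omega W(p_0)\dd x+\int_\Omega W(p_1)\dd x\bigr)+C_2$.

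Second, I would convert the $W$-integrals back to energies. For $q\in\bfP$ we have $\bigl|\int_\Omega Gq\dd x\bigr|\leq\|G\|_{\rmL^\infty}$, because $q\geq0$ and $\int_\Omega q\dd x=1$. Hence $\int_\Omega W(p_j)\dd x=\calE(p_j)+\int_\Omega Gp_j\dd x\leq E+\|G\|_{\rmL^\infty}$ for $j=0,1$. Likewise, $\calE(\gamma^\Bh(s))\leq\int_\Omega W(\gamma^\Bh(s))\dd x+\|G\|_{\rmL^\infty}$, since $\gamma^\Bh(s)\in\bfP$. Combining these,
\[
\calE(\gamma^\Bh(s))\leq 2C_1\bigl(E+\|G\|_{\rmL^\infty}\bigr)+C_2+\|G\|_{\rmL^\infty}=\wt E .
\]
This is the claimed constant.

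The only delicate point is measure-theoretic. The negative part of $W$ must be integrable so that the integral manipulations are legitimate. This holds because $W$ is continuous and coercive by \eqref{eq:W.coercive}, hence bounded from below. For step functions everything is in any case a finite sum, so there is no real obstacle; the inequality $C_1\geq0$ is used when multiplying the bounds. The main step is simply checking that the interval in Lemma~\ref{le:DensBhatt} matches that in \eqref{eq:W.doubling}.
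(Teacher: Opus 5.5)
Your proposal is correct and follows the paper's own argument. The paper's proof just says the bound follows by combining \eqref{eq:W.doubling} with Lemma~\ref{le:DensBhatt}. You have filled in the integration over $\Omega$ and the estimate $\bigl|\int_\Omega G q \dd x\bigr| \leq \|G\|_{\rmL^\infty}$ for $q\in\bfP$, which together yield exactly the constant $\wt E = 2C_1E + C_2 + (2C_1{+}1)\|G\|_{\rmL^\infty}$.
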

\begin{proof} Let $s\mapsto \gamma^\Bh(s) \in \bfP_N$ be the Bhattacharya geodesic. 
The condition $\calE_N(\gamma^\Bh(s)) \leq \wt E$ follows by combining assumption
\eqref{eq:W.doubling} with Lemma \ref{le:DensBhatt}.
\end{proof} 

Recalling the definition of the bound $ \bfL^\mathrm{inf}(E) $ for the
infinitesimal sublevel stretching rate in Corollary~\ref{co:LowBddLambda}, we
can now define a \emph{global sublevel Lipschitz rate bound}
\begin{equation}
  \label{eq:GlobStreRate}
  \bfL^\glo(E):=  \bfL^\mathrm{inf}(\wt E) \quad \text{with }\wt E \text{
    from Corollary \ref{co:EnergyBhGeod}}.
\end{equation}
 
With this we are now able to state the global and uniform Lipschitz estimates 
for the discrete solutions $t\mapsto p^N(t)=S^N_t(p^N(0))\in \bfP_N$ as well as
for the global semiflow $\big( S_t\big)_{t\geq 0}$ on $\dom(\calE)\subset
\bfP$. Here we essentially use that our viscosities are of Bhattacharya
type, i.e.\  \eqref{eq:k(p)bounds} holds, which is seen in the result where the
constant $C_\mafo{upp}/C_\mafo{low} = \sqrt{\ol\kappa/\ul\kappa}$ appears. Of
course, for the true Bhattacharya case $k(p)=\kappa\,p$, we have
$C_\mafo{upp}/C_\mafo{low} =1$, and the Lipschitz estimates
\eqref{eq:pj.Lipschitz} are indeed contraction/expansion estimates. 

\begin{theorem}[Uniform Lipschitz estimate]
\label{th:UnifLipschitz}
Assume \eqref{eq:k(p)bounds}, \eqref{eq:k.W.conds}, and \eqref{eq:W.doubling}
hold. For $N\in \N$ and $E > \inf \calE_N$ consider
$p_0^N,p^N_1 \in \Sigma_N(E)$. Then, with $\bfL^\glo (E)$ from
\eqref{eq:GlobStreRate} and $C_\mafo{low}$ and $C_\mafo{upp}$ from Lemma
\ref{le:CompareDist}, we have
\begin{subequations}
  \label{eq:pj.Lipschitz}
\begin{equation}
  \label{eq:pj.Lipschitz.a}
  \Bh\big(S^N_t(p_0), S^N_t(p_1) \big) \leq \frac{C_\mafo{upp}}{C_\mafo{low}} \,
  \ee^{- \bfL^\glo(E)\,t}\,  \Bh\big(p^N_0, p^N_1 \big)
  \quad \text{for all } t>0. 
\end{equation}
Similarly, for all $p_0,p_1 \in \Sigma(E)$ we have 
\begin{equation}
  \label{eq:pj.Lipschitz.b}
 { \Bh\big(S_t(p_0), S_t(p_1) \big) \leq \frac{C_\mafo{upp}}{C_\mafo{low}} \,
  \ee^{- \bfL^\glo(E)\,t}\,  \Bh\big(p_0, p_1 \big)
  \quad \text{for all } t>0. }
\end{equation}
\end{subequations}
\end{theorem}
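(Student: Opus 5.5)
The plan is to repeat the transport-of-geodesics argument from the proof of Proposition~\ref{pr:ComparApprox}, but with a Bhattacharya geodesic as the initial curve instead of a $\calD^k_N$-geodesic. The point is that for Bhattacharya geodesics Corollary~\ref{co:EnergyBhGeod} gives energy control globally, without any smallness condition on the distance.

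For \eqref{eq:pj.Lipschitz.a}, fix $p_0^N,p_1^N\in\Sigma_N(E)$ and let $\gamma=\gamma^\Bh:[0,1]\to\bfP_N$ be the constant-speed Bhattacharya geodesic connecting them. It stays in $\bfP_N$ by Lemma~\ref{le:Bhatt.PN.Complete}, and by Corollary~\ref{co:EnergyBhGeod} it satisfies $\gamma(s)\in\Sigma_N(\wt E)$ for all $s$. I will consider $p_s(t):=S^N_t(\gamma(s))$ and $y_s(t):=\pl_s p_s(t)$, with no change of loading, so $\zeta^N=0$.

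Smooth dependence on the initial data in the ODE \eqref{eq:DiscrODE} shows that $(p_s,y_s)$ solves \eqref{eq:ODE.Tang}. Since $\calE_N$ decreases along the flow, Proposition~\ref{pr:Growth.yN} applies with energy level $\wt E$ and gives
\[
\norm{y_s(t)}{\bbG_N}\leq \ee^{-\bfL^\mathrm{inf}(\wt E)t}\,\norm{\pl_s\gamma(s)}{\bbG_N}.
\]
The curve $s\mapsto p_s(t)$ joins $S^N_t(p_0^N)$ and $S^N_t(p_1^N)$ in $\bfP_N$, so
\[
\calD^k_N\bigl(S^N_t p^N_0,S^N_t p^N_1\bigr)\leq \ee^{-\bfL^\glo(E)t}\int_0^1\norm{\pl_s\gamma}{\bbG_N}\dd s .
\]

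The step that needs care is bounding $\int_0^1\norm{\pl_s\gamma}{\bbG_N}\dd s$ by $C_\mafo{upp}\Bh(p_0^N,p_1^N)$. Since $\gamma$ is a $\Bh$-geodesic, its $\Bh$-length equals $\Bh(p_0^N,p_1^N)$. The $\Bh$-length is measured by the metric with $k^*(p)=4p$, and \eqref{eq:k(p)bounds} gives pointwise $1/k(p)\leq 1/(\ul\kappa p)=(4/\ul\kappa)\cdot 1/(4p)$. Hence the $\bbG_N$-length is at most $2/\sqrt{\ul\kappa}=C_\mafo{upp}$ times the $\Bh$-length. This is the infinitesimal version of Lemma~\ref{le:CompareDist} applied along the particular curve $\gamma$. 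Combining this with the lower bound $\Bh\leq \calD^k_N/C_\mafo{low}$ from Lemma~\ref{le:CompareDist} yields \eqref{eq:pj.Lipschitz.a}. I also need $\gamma$ to be $\rmC^1$, which should hold because the explicit formula is smooth in $s$, given $p_j^N>0$ on a sublevel by Lemma~\ref{le:SublevPositive}.

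For \eqref{eq:pj.Lipschitz.b}, take $p_0,p_1\in\Sigma(E)$ and approximate them by $\bbP_N p_j$. By Lemma~\ref{le:WellPrepIC} we have $\calE_N(\bbP_N p_j)\leq E+\eps$ for large $N$, so \eqref{eq:pj.Lipschitz.a} holds with level $E+\eps$. This step is the main obstacle: $S^N_t$ uses the loading $G^N$, and the statement involves the limit flow $S_t$. By Theorem~\ref{th:SGLocContract}, $S^N_t(\bbP_N p_j)\to S_t(p_j)$ in $\rmL^1$, hence in $\Bh$ by \eqref{eq:Bh.He.L1}, and $\Bh(\bbP_Np_0,\bbP_Np_1)\to\Bh(p_0,p_1)$. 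Passing to the limit $N\to\infty$ gives the estimate with rate $\bfL^\glo(E{+}\eps)$. Finally I will send $\eps\searrow0$, using that $E\mapsto\bfL^\glo(E)=\bfL^\mathrm{inf}(\wt E)$ is an explicit affine, hence continuous, function of $E$.
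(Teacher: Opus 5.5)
Your proposal is correct and follows essentially the same route as the paper: you transport the Bhattacharya geodesic, which stays in $\Sigma_N(\wt E)$ by Corollary~\ref{co:EnergyBhGeod}, along $S^N_t$, control its $\bbG_N$-length by the rate $\bfL^\mathrm{inf}(\wt E)=\bfL^\glo(E)$, and compare with $\Bh$ via $C_\mafo{upp}$ and $C_\mafo{low}$. The differences are minor: you apply Proposition~\ref{pr:Growth.yN} pointwise in $s$ and integrate lengths, whereas the paper differentiates $\ell(t)=\int_0^1\langle\bbG_N\pl_s\gamma_t,\pl_s\gamma_t\rangle\dd s$ directly via Theorem~\ref{th:Stretching} and Gr\"onwall; and your limit argument for \eqref{eq:pj.Lipschitz.b} (level $E{+}\eps$ via Lemma~\ref{le:WellPrepIC}, Theorem~\ref{th:SGLocContract}, then continuity of $E\mapsto\bfL^\glo(E)$) makes explicit what the paper only summarizes as ``in exactly the same way''.
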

\begin{proof} The proof follows the idea of the proof of Proposition
  \ref{pr:ComparApprox}, but now we do not use a geodesic in the space
  $(\bfP_N,\calD^k_N)$ but in the Bhattacharya space $( \bfP_N,\Bh)$. 
  
\emph{Step 1. Construction of transported geodesic:} By Corollary
\ref{co:EnergyBhGeod} we know that the $\Bh$ geodesic
$\gamma^\Bh\in \rmC^1([0,1];\bfP_N)$ that connects $p^N_0$ and $p^N_1$ 
lies in the sublevel $\Sigma_N(\wt E)$. Now,
we define the transported curves $\gamma_t\in \rmC^1([0,1];\bfP_N)$ via
\[
\gamma_t(s) = S_t^N(\gamma^\Bh(s)) \quad \text{for all } s\in [0,1] \text{ and
} t\geq 0, 
\]
where $(S^N_t)_{t\geq 0}$ denotes the gradient-flow semigroup for
$(\bfP_N,\calE_n,\bbK_N)$. As $\calE_N$ is a Lyapunov function we conclude
\[
   \gamma_t(s) \in \Sigma_N(\wt E) 
 \quad \text{for } s\in [0,1] \text{ and } t\geq 0 .\medskip
\]

\emph{Step 2. Estimating the squared length of $\gamma_t$:} 
In $ (\bfP_N, \calD^k_N) $ we have 
\[
\ell(t):=\mafo{Length}_{\calD^k_N}(\gamma_t)^2 = \int_0^1 \langle \bbG_N(\gamma_t(s))
\pl_s\gamma_t(s),\pl_s \gamma_t(s)\rangle  \dd s \geq
\calD^k_N\big(S^N_t(p_0),S^N_t(p_1)\big)^2
\]
for all $t\geq 0$, where the last estimate holds since
$\gamma_t(j)=S^N_t(p_j)$ for $j = 0, 1$, i.e.\ the 
curves $\gamma_t$ connect $S^N_t(p_0)$ and $S^N_t(p_1)$.  

First, we use $k(p)\geq \ul\kappa \,p$ to obtain
\[
\ell(0) \leq \frac4{\ul\kappa} \mafo{Length}_{\Bh}(\gamma^0)^2 =
C_\mafo{upp}^2 \Bh( p_0, p_1)^2,
\] 
where the last equality holds because $\gamma^\Bh=\gamma^0$ is a $\Bh$-geodesic. 

Secondly, we estimate the growth of $\ell$ as follows. For each fixed
$s\in {]0,1[}$ we can use that the pair
$ t \mapsto (p^N(t),y^N(t))=(\gamma_t(s), \pl_s \gamma_t(s))$ solves
\eqref{eq:ODE.Tang} with $\zeta^N=0$. 
Hence, taking the derivative of $\ell$ with respect to $t>0$ and employing
Theorem \ref{th:Stretching} gives
\begin{align*}
  \frac\rmd{\rmd t} \ell(t)
  &= \int_{s=0}^1 
    2\frac{\pl}{\pl t}\calR_N(\gamma_t(s),\pl_s\gamma_t(s)) 
  \dd s 
% \\ &
= 2 \int_{s=0}^1  {-} \bigl\langle \wt\bbH_N(\gamma_t(s))\pl_s\gamma_t(s) , 
   \pl_s\gamma_t(s) \bigr\rangle   \dd s 
\\
  &\geq -2 \int_{s=0}^1 
    \Lambda^\mafo{inf}_N( \wt E) 2\calR_N(\gamma_t(s),\pl_s\gamma_t(s)) 
  \dd s 
% \\    &
 = \ -2  \Lambda^\mafo{inf}_N(\wt E) \ell(t) 
% \\   & 
 \leq -2 \bfL^\glo(E) \ell(t). 
\end{align*}
Hence, Gr\"onwall's lemma yields $\ell(t) \leq \ee^{ -2\bfL^\glo(E) \,
  t} \ell(0)$.\medskip 

\emph{Step 3. The Lipschitz estimate:} As $\gamma_t(j)=S^N_t(p_j)$ for $j = 0,
1$ and all $t>0$, we can now estimate the distance between the two solutions:
\begin{alignat*}{4}
\Bh\big(S^N_t(p_0),S^N_t(p_1) \big)^2 
  &\leq \frac1{C^2_\mafo{low}} \, \calD^k_N \big(S^N_t(p_0), 
    S^N_t(p_1)\big)^2
  &&\leq \ \frac1{C^2_\mafo{low}}\,\ell(t) 
\\
  &\leq  \frac1{C^2_\mafo{low}} \,\ee^{-2\bfL^\glo(E) \,t}\,\ell(0)\
  &&\leq \  \frac{C_\mafo{upp}^2}{C^2_\mafo{low}} 
    \, \ee^{-2\bfL^\glo(E) \, t}
    \,\Bh\big(p^N_0,p^N_1\big)^2.  \quad\mbox{}
\end{alignat*}
This is the desired result \eqref{eq:pj.Lipschitz.a}, and
\eqref{eq:pj.Lipschitz.b} follows in exactly the same way.
\end{proof}

\section{Characterizations of the semiflow}
\label{se:CharSemiflows} 

In this section we discuss in what sense the constructed global semigroup
$\big( S_t\big)_{t\geq 0} $ on
$\dom(\calE)= \bigset{p\in \bfP}{ \calE(p) <\infty}$ can be understood as a
\emph{gradient flow}, i.e.\ associated to the metric gradient system
$ \big( \bfP, \calE, \calD^k\big)$.  In Section \ref{su:WeakSolutions} we show
that the curves $p(t)=S_t(p(0))$ are weak solutions in the sense that
\eqref{eq:I.BasicEq} holds after testing against smooth test functions. In
Section \ref{su:CurvMaxSlope} we show that the solutions are curves of maximal
slope in the sense defined in \cite{AmGiSa05GFMS}, which follows by
establishing a suitable chain rule. An important consequence is that the energy
converges along the approximations, namely $\calE(p^N(t)) \to \calE(p(t))$
which is nontrivial because of the missing compactness. A similar result was
obtained in \cite{MiOrSe14ANVM}, but only for the Bhattacharya case
$k(p)=\kappa p$. In the final Section \ref{su:SublevelEVI} we show that the
solutions form a sublevel EVI flow when the global sublevel stretching rate
$\Lambda^\glo_N(E)$ is bounded below uniformly in the approximation dimension
$N$. This assumption is a major restriction but it certainly holds in the
Bhattacharya case, see Corollary \ref{co:Bhatt.EVI}.

\subsection{Weak solutions} 
\label{su:WeakSolutions}

Having the uniform convergence of $p^N(t)=S^N_t(p^N(0))$ to $p(t)= S_t(p(0))$
we may also pass to the limit in the following weak form of equation
\eqref{eq:I.BasicEq}. A function $p: {[0, \infty[} \to \bfP$ is called a \emph{weak
solution} if  
\begin{subequations}
\label{eq:WeakSol}
\begin{align} 
  \label{eq:WeakSol.a} & p \in \rmC^0({[0,\infty[},\bfP) \text{ and } 
   \sqrt{k(p)}\,\big( W'(p){-}G \big) \in \rmL^2_\loc\big({[0,\infty[}\ti\Omega
     \big), 
 \\
 \nonumber 
 &\forall \: \varphi\in \rmC^1_\rmc( {[0,\infty[} \ti \ol\Omega): 
 \\ 
 & \label{eq:WeakSol.b} 
  \quad \int_\Omega p(0)\varphi(0)\dd x = \int_0^\infty \hspace{-0.5em} 
   \int_\Omega \biggl( p\,\pl_t \varphi - k_p\biggl( W'(p) {-}G 
  -\frac{\big[k_p(W'(p){-}G)\big]}{[k_p]}\biggr) \varphi \biggr) \dd x \dd t, 
\end{align}
\end{subequations}
where as above $k_p(t,x)=k(p(t,x))$. Note that the second condition in
\eqref{eq:WeakSol.a} implies that the 
integral in  \eqref{eq:WeakSol.b}  exists, because of
$\int_0^T\hspace{-0.4em}\int_\Omega  k_p\Bigl(
W'(p){-}G-\frac{[k_p(W'(p)-G)]}{[k_p]}\Bigr)^2\dd x \dd t 
=   \int_0^T \hspace{-0.4em} \int_\Omega  k_p\big( W'(p)-G)^2 \dd x \dd t - 
\int_0^T \frac{[k_p (W'(p) - G)]^2}{[k_p]} \dd t < \infty $  and $\sqrt{k_p} \in
\rmL^2([0,T]\ti\Omega)$. 

A major role in our proofs will be played by the functions 
\begin{equation}
  \label{eq:psi.psiN}
  \begin{aligned}
\psi(t,x)&:=  \sqrt{k(p(t,x))} \biggl(
  W'(p(t,x)) -G(x) - \frac{[k_{p(t)}(W'(p(t)){-}G)]}{[k_{p(t)}]}
  \biggr) \ \text{ and}
\\
\psi^N(t,x)&:=  \sqrt{k(p^N(t,x))} \biggl(
  W'(p^N(t,x)) -G(x) - \frac{[k_{p^N(t)}(W'(p^N(t)){-}G)]}{[k_{p^N(t)}]}
  \biggr),
\end{aligned}
\end{equation}
because $\psi^N$ has a priori bounds in $\rmL^2({[0,\infty[}\ti \Omega)$ independent
of $N$. 

\begin{theorem}[Weak solutions]
\label{th:weakSol}
Let $p_0 \in \dom(\calE)$, and define $p : [0, \infty[ \to \bfP$ by
$p(t) := S_t(p_0)$. Then $p$ is a weak solution of \eqref{eq:I.BasicEq}.
\end{theorem}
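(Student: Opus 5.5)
We will pass to the limit $N\to\infty$ in the weak form of the discrete equations. Take $p^N(t)=S^N_t(\bbP_N p_0)$. By Theorem~\ref{th:SGLocContract}, $p^N(t)\to p(t)$ strongly in $\rmL^1(\Omega)$ for every $t\geq0$. Since $\calE_N(p^N(0))\leq E:=\calE(p_0)+1$ for $N$ large (Lemma~\ref{le:WellPrepIC}), all $p^N(t)$ lie in $\Sigma(E)$. Moreover, Theorem~\ref{th:UnifLipschitz} (or Proposition~\ref{pr:ComparApprox}) gives local uniformity in $t$, and together with \eqref{eq:Bh.He.L1} this yields $p\in\rmC^0([0,\infty[,\bfP)$. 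Each $p^N$ is a classical solution of \eqref{eq:DiscrODE}, i.e.\ of \eqref{eq:I.BasicEq} with $G^N$ in place of $G$. Testing with $\varphi\in\rmC^1_\rmc$ and integrating by parts in $t$ gives \eqref{eq:WeakSol.b} for $p^N$, with $G^N$ replacing $G$ and the flux written as $\sqrt{k(p^N)}\,\psi^N_{G^N}$, where $\psi^N_{G^N}$ denotes $\psi^N$ from \eqref{eq:psi.psiN} with $G^N$ in place of $G$.

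\emph{A priori bound.} The energy-dissipation identity for the ODE reads
\[
\calE_N(p^N(T))+\int_0^T\!\!\int_\Omega (\psi^N_{G^N})^2\dd x\dd t=\calE_N(p^N(0)) .
\]
Combined with the lower bound on $\calE$, it gives $\|\psi^N_{G^N}\|_{\rmL^2([0,\infty[\times\Omega)}\leq C$ uniformly in $N$. Hence, along a subsequence, $\psi^N_{G^N}\rightharpoonup\chi$ weakly in $\rmL^2$. Also $\sqrt{k(p^N)}\to\sqrt{k(p)}$ strongly in $\rmL^2([0,T]\times\Omega)$: continuity of $k$ and a.e.\ convergence (along a subsequence) give pointwise convergence, and $k(p^N)\leq\ol\kappa p^N$ with $p^N\to p$ in $\rmL^1$ gives uniform integrability, so Vitali applies. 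The product $\sqrt{k(p^N)}\,\psi^N_{G^N}\varphi$ therefore converges weakly in $\rmL^1$ to $\sqrt{k(p)}\,\chi\,\varphi$. The term $\int p^N\pl_t\varphi$ converges by $\rmL^1$ convergence and dominated convergence in $t$, since $\|p^N(t)\|_{\rmL^1}=1$. The initial term converges since $\bbP_Np_0\to p_0$.

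\emph{Identification of $\chi$.} This is the main obstacle, because it requires $\chi=\psi$ where $\psi$ is built from the nonlinear quantity $W'(p)$ and no compactness is available. I would proceed pointwise. Along a further subsequence $p^N\to p$ a.e.\ in $[0,T]\times\Omega$ (use $\rmL^1([0,T]\times\Omega)$ convergence, obtained from pointwise-in-$t$ convergence and dominated convergence), and $G^N\to G$ uniformly. Since $W'$ is continuous on $]0,\infty[$ and $p>0$ a.e., we get $\sqrt{k(p^N)}(W'(p^N)-G^N)\to\sqrt{k(p)}(W'(p)-G)$ a.e. It remains to handle the nonlocal scalar $\lambda^N(t):=[k_{p^N}(W'(p^N)-G^N)]/[k_{p^N}]$. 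The stress control in \eqref{eq:Cond.lambda.cvx} gives $|k(p^N)W'(p^N)|\leq B_1W(p^N)+B_2+B_3(p^N-1)$, so $\lambda^N$ is bounded uniformly on sublevels, using $[k_{p^N}]\geq\ul\kappa$ and that $\int W(p^N)$ is bounded on $\Sigma(E)$. Fatou-type reasoning is not enough to show $\lambda^N(t)\to\lambda(t)$ because $W(p^N)$ need not be uniformly integrable. Instead, I would use that the identified $\chi$ must satisfy $\chi=\sqrt{k(p)}(W'(p)-G-\Lambda)$ for some $\Lambda\in\rmL^\infty(0,T)$, as the weak limit of $\sqrt{k(p^N)}\lambda^N$ with bounded $\lambda^N\rightharpoonup^*\Lambda$. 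Weak-strong products give this, since $\sqrt{k(p^N)}\to\sqrt{k(p)}$ strongly. The a.e.\ convergent part converges weakly to its pointwise limit by boundedness in $\rmL^2$. Finally, $\Lambda$ is determined by the constraint that the flux has zero $\sqrt{k}$-weighted mean: $\int\sqrt{k(p^N)}\psi^N_{G^N}\dd x=0$ for each $t$ passes to the limit after testing in $t$. This gives $[k_p(W'(p)-G)]-\Lambda[k_p]=0$ a.e.\ in $t$, where the integrability of $k_p(W'(p)-G)$ comes from the stress control and Fatou. Hence $\Lambda=\lambda$, $\chi=\psi$, and $\psi\in\rmL^2$ yields the integrability in \eqref{eq:WeakSol.a}. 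Because the limit is unique, the whole sequence converges, and \eqref{eq:WeakSol.b} follows.
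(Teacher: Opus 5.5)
Your proof is correct, and its skeleton matches the paper's. The energy--dissipation identity gives a uniform $\rmL^2$ bound on $\psi^N$, $\sqrt{k(p^N)}\to\sqrt{k(p)}$ strongly in $\rmL^2([0,T]\ti\Omega)$, and a weak--strong product passes to the limit in the tested equation.

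Where you genuinely differ is in identifying the weak limit of $\psi^N$. The paper simply asserts $\psi^N\to\psi$ pointwise a.e.\ and concludes weak convergence. That tacitly requires the nonlocal multiplier $\lambda^N(t)=[k_{p^N}(W'(p^N){-}G^N)]/[k_{p^N}]$ to converge to $\lambda(t)$ for a.e.\ $t$, i.e.\ convergence of $\int_\Omega k(p^N)W'(p^N)\dd x$. This in turn needs uniform integrability of $W(p^N(t))$, which is essentially the energy convergence $\calE(p^N(t))\to\calE(p(t))$. The paper only obtains that later, in Theorem~\ref{th:CurvesMaxSlope}, and under the extra hypothesis \eqref{eq:Monotone}.

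Your route avoids this. You split off the multiplier and use the stress control in \eqref{eq:Cond.lambda.cvx} only to get $|\lambda^N|\leq C$. You then pass to a weak-$*$ limit $\Lambda$ in $\rmL^\infty(0,T)$ and pin down $\Lambda=\lambda$ through the exact constraint $\int_\Omega\sqrt{k(p^N)}\,\psi^N\dd x=0$, which survives weak--strong limits. So your argument closes a step the published proof leaves unjustified, at the price of a little more bookkeeping. It also keeps track of the $G^N$ versus $G$ discrepancy in $\psi^N$, which the paper glosses over.

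Two small points to tighten:
\begin{itemize}
\item The uniform convergence on $[0,T]$, and hence $p\in\rmC^0([0,\infty[;\bfP)$, comes from Proposition~\ref{pr:ComparApprox} with $p_1=p_0$. Theorem~\ref{th:UnifLipschitz} does not give it: it only compares solutions with the same $N$ and additionally needs \eqref{eq:W.doubling}.
\item The $\rmL^2$ bound on the ``a.e.-convergent part'' $\sqrt{k(p^N)}(W'(p^N){-}G^N)$ should be derived explicitly. Write it as $\psi^N_{G^N}+\sqrt{k(p^N)}\,\lambda^N$ and use $|\lambda^N|\leq C$ together with $\int_\Omega k(p^N)\dd x\leq\ol\kappa$.
\end{itemize}
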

\begin{proof}
By Lemma \ref{le:WellPrepIC} we can choose $p_0^N$ such that
$p_0^N\to p_0$ in $\rmL^1(\Omega)$ and $\calE_N(p^N_0) \to \calE(p_0)$ as
$\N \to \infty$. Setting $p^N(t)= S^N_t(p^N_0)$ Theorem \ref{th:SGLocContract}
shows $p(t) = \lim_{N \to \infty} p^N(t)$ for all $t \in [0, \infty[$,
where the latter limit is taken in $(\bfP, \Bh)$. Note it follows straight from
the definition of the Bhattacharya distance that $p^N \to p$ pointwise a.e.\ in
$[0, \infty[ \ti \Omega$.

\emph{Step 1.} Our strategy starts from the energy-dissipation balance formula
\begin{equation} \label{eq:energydissipation}
  \begin{aligned}
  &  \calE_N(p^N(0)) = \calE_N(p^N(T)) + \int_0^T \bigl\langle
    \rmD\calE_N(p^N(t)), \bbK(p^N(t))\rmD\calE_N(p^N(t)) \bigr\rangle
    \, \rmd{t}, \\
    &= \calE_N(p^N(T)) + \int_0^T \hspace{-0.5em} \int_\Omega \big|
    \psi^N(t,x)\big|^2  \, \rmd{x} \, \rmd{t} 
 \geq \inf{\calE} + \int_0^T \hspace{-0.5em} \int_\Omega \big|
    \psi^N(t,x)\big|^2 \, \rmd{x} \, \rmd{t},
  \end{aligned}
\end{equation}
which holds for all $N \in \bbN$ and $T > 0$. Using the property that
$\calE_N(p^N(0)) \to \calE(p(0))$, it follows that the functions
$\psi^N : [0, \infty[ \ti \Omega \to \bbR$ are bounded in
$\rmL^2({[0,\infty[}\ti \Omega)$ uniformly in $N \in \bbN$.

We claim $\psi^N \rightharpoonup \psi$ in $\rmL^2([0, T] \ti \Omega)$.
This can be seen by noting that $\psi^N \to \psi$ pointwise a.e.\ in
$[0, T] \ti \Omega$, so convergence of
$\int_0^T \hspace{-0.4em} \int_\Omega \psi^N v \, \rmd{x} \, \rmd{t}$ to
$\int_0^T \hspace{-0.4em} \int_\Omega \psi v \, \rmd{x} \, \rmd{t}$ for arbitrary
$v \in \rmL^2([0,T] \ti \Omega)$ follows by H\"{o}lder's inequality and Lebesgue's
dominated convergence theorem.

\emph{Step 2.} Next, we claim $\sqrt{k(p^N)} \to \sqrt{k(p)}$ strongly in
$\rmL^2([0,T] \ti \Omega)$. To see this, we have the elementary estimate
\begin{equation*}
  \bigl\lvert \sqrt{k(p)} - \sqrt{k(p^N)} \bigr\rvert^2
  \leq 2 \bigl( k(p) + k(p^N) \bigr)
  \leq 2\ol{\kappa}(p + p^N)
  \in \rmL^1([0, T] \ti \Omega),
\end{equation*}
so the desired convergence holds again by the Lebesgue's DCT.

\emph{Step 3.} We now show the weak formulation \eqref{eq:WeakSol.b} is
preserved in the limit. Let
$\varphi \in \rmC_\rmc^1([0, \infty[ \ti \ol{\Omega})$ and choose $T>1$
such that $\supp{\varphi} \subset [0,T{-}1]\ti \ol\Omega$. Then,
\begin{equation*}
  \int_\Omega p^N(0)\varphi(0) \, \rmd{x}
  = \int_0^\infty \hspace{-0.5em} \int_\Omega \Bigl(
  p^N \pl_t{\varphi} - \sqrt{k(p^N)}\: \psi^N \varphi
  \Bigr) \, \rmd{x} \, \rmd{t}.
\end{equation*} 
The convergence of the first and second term follow by linearity and
the convergence of $p^N\to p \in \rmC^0\big([0,T];\rmL^1(\Omega)\big)$.  By
Steps 1 we have $ \psi^N \rightharpoonup \psi $ (weakly), and Step 2 yields
$\sqrt{k(p^N)} \varphi \rightarrow \sqrt{k(p)} \varphi $ in
$\rmL^2([0,T]\ti \Omega)$ (strongly). Hence the third term converges to the
desired limit as well, and we arrive at
  \begin{equation*}
    \int_\Omega p(0)\varphi(0) \, \rmd{x}
    = \int_0^\infty \hspace{-0.5em} \int_\Omega \Bigl(
      p\, \pl_t{\varphi} - \sqrt{k(p)} \:\psi\, \varphi
    \Bigr) \, \rmd{x} \, \rmd{t}.
  \end{equation*}
  This is precisely \eqref{eq:WeakSol.b}.
\end{proof}

\subsection{Curve of maximal slope}
\label{su:CurvMaxSlope} 

Here we provide conditions under which we are able to show that the solutions
$t \mapsto p(t) =S_t(p_0)$ with $ p_0 \in \dom(\calE)$ are curves of maximal
slope for the gradient system $\big( \bfP,\calE, \calR \big)$ with 
dissipation potential $\calR$ from \eqref{eq:def.calR}. 

\begin{remark}[Metric versus Riemannian gradient systems]\slshape
\label{re:MetricRiemGS}
Note that we do not use the theory of metric gradient flows from
\cite{AmGiSa05GFMS} which relies in the metric speed $\norm{ \dot p}{\calD^k} $
and the the metric slope $\norm{ \pl \calE}{\calD^k}$. Instead we use the
explicit dissipation potentials $\calR(p,\dot p)$ and $\calR^*(p,-\rmD
\calE(p))$. To connect our approach to the metric theory, it would be necessary
to show  the characterizations  
\[
\norm{ \dot p}{\calD^k}(t)= \sqrt{2 \calR(p(t),\dot p(t))} \quad
\text{and} \quad \norm{ \pl \calE}{\calD^k}(p) =
\sqrt{2\calR^*(p,-\rmD\calE(p)) }.
\]
This is trivial in the finite-dimensional case $\bfP_N$
(cf. \cite{Miel23IAGS}) but more tricky in $\bfP$. 
\end{remark}

The proof follows the usual steps of showing that (i) the limits
$p(t)=S_t(p_0)$ are EDI solutions (EDI = Energy-Dissipation Inequality) and
(ii) then showing that a suitable chain rule holds. This then implies that the
solutions satisfy the energy-dissipation balance (EDB), which yields the fact
that we have curves of maximal slope and that the energy of the approximations
converges to the energy of the limit solutions.

\begin{lemma}[EDI solutions]
\label{le:EDIsol} 
The solutions $p(t)=S_t(p_0) \in \bfP$ with $p_0\in \dom(\calE)$ are EDI
solutions, i.e.\ they satisfy, for all $T>0$, the estimate
\begin{equation}
  \label{eq:EDIsol}
  \calE(p(T)) + \int_0^T\!\!\int_\Omega  \biggl(\frac{(\dot p)^2}{2k(p)} +
\frac{k(p)}{2} \biggl(W'(p){-}G - \frac{[k_p(W'_p{-}G)]}{[k_p]} 
  \biggr)^{\!\!2\ } \biggr) \dd
t \leq \calE(p_0) .
\end{equation}
\end{lemma}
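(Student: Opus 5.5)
The plan is to pass to the limit $N\to\infty$ in the discrete energy-dissipation balance and use lower semicontinuity for each term. For the discrete solutions $p^N(t)=S^N_t(p^N_0)$ with $p^N_0=\bbP_N p_0$, the ODE \eqref{eq:DiscrODE} is a smooth gradient flow, so we have the exact balance
\[
\calE_N(p^N(T)) + \int_0^T \bigl( \calR_N(p^N,\dot p^N) + \calR^*_N(p^N,-\rmD\calE_N(p^N)) \bigr)\dd t = \calE_N(p^N_0).
\]
Here $\calR_N(p^N,\dot p^N)=\calR^*_N(p^N,-\rmD\calE_N(p^N))=\frac12\int_\Omega|\psi^N|^2\dd x$, because $\dot p^N=-\sqrt{k(p^N)}\,\psi^N$ with $\psi^N$ from \eqref{eq:psi.psiN}. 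By Lemma~\ref{le:WellPrepIC} the right-hand side converges to $\calE(p_0)$. It then suffices to show three liminf inequalities: one for the final energy and one for each of the two dissipation terms.

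\emph{Energy term.} Theorem~\ref{th:SGLocContract} gives $p^N(T)\to p(T)$ in $\rmL^1(\Omega)$, and hence a.e. along a subsequence. The term $\int G p^N$ converges since $G\in\rmL^\infty$. Since $W$ is bounded below (by continuity and coercivity \eqref{eq:W.coercive}), Fatou's lemma gives $\calE(p(T))\le\liminf\calE_N(p^N(T))$.

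\emph{Dual dissipation term.} We already know from the proof of Theorem~\ref{th:weakSol} that $\psi^N\to\psi$ a.e. and $\psi^N\rightharpoonup\psi$ weakly in $\rmL^2([0,T]\ti\Omega)$. Weak lower semicontinuity of the $\rmL^2$ norm then yields
\[
\int_0^T\!\!\int_\Omega\tfrac12|\psi|^2 \le \liminf \int_0^T\!\!\int_\Omega\tfrac12|\psi^N|^2 ,
\]
which is exactly the $\calR^*$ part of \eqref{eq:EDIsol}. Strictly, the a.e. convergence of the averaged term $[k_{p^N}(W'(p^N)-G)]/[k_{p^N}]$ needs justification. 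If that is delicate, I would use Fatou's lemma on the pointwise a.e. limit, after identifying the limit of the average through the weak convergence of $\sqrt{k(p^N)}\psi^N$.

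\emph{Primal dissipation term (main obstacle).} Here $p$ is a priori only continuous into $\rmL^1$, so we must show that $\dot p$ exists as a function and that $\int\!\!\int \frac{(\dot p)^2}{k(p)}\le\liminf\int\!\!\int\frac{(\dot p^N)^2}{k(p^N)}$. I would use the joint convexity and lower semicontinuity of the perspective-type integrand $(a,b)\mapsto b^2/k(a)$, or more robustly its dual representation
\[
\int_0^T\!\!\int_\Omega\frac{v^2}{2k(p)} = \sup_{\varphi\in \rmC^1_\rmc}\int_0^T\!\!\int_\Omega\Bigl(v\varphi - \tfrac12 k(p)\varphi^2\Bigr).
\]
For fixed $\varphi$, write $\int\!\!\int \dot p^N\varphi = -\int\!\!\int p^N\pl_t\varphi$ plus boundary terms. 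This converges to the same expression with $p$, by strong convergence of $p^N$ in $\rmC^0([0,T];\rmL^1)$. Also $\int\!\!\int k(p^N)\varphi^2\to\int\!\!\int k(p)\varphi^2$ by dominated convergence, using $k(p^N)\le\ol\kappa p^N$ together with the $\rmL^1$ convergence. Hence for every $\varphi$,
\[
-\int\!\!\int p\,\pl_t\varphi-\tfrac12\int\!\!\int k(p)\varphi^2 \le \liminf\int\!\!\int \frac{(\dot p^N)^2}{2k(p^N)} =: C .
\]
Replacing $\varphi$ by $\lambda\varphi$ and optimizing over $\lambda$ gives $\bigl(\int\!\!\int p\,\pl_t\varphi\bigr)^2\le 2C\int\!\!\int k(p)\varphi^2$. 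So the distribution $\dot p$ is a bounded functional on the weighted space $\rmL^2(k(p)\dd x\dd t)$. Riesz representation then gives $\dot p=k(p)w$ with $\int\!\!\int k(p)w^2\le 2C$, i.e.\ $\int\!\!\int\frac{(\dot p)^2}{2k(p)}\le C$.

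Summing the three liminf inequalities yields \eqref{eq:EDIsol}. One point needs care: the liminf of a sum dominates the sum of the liminfs, so the common subsequence issue is harmless, since each limit inequality holds along any subsequence.
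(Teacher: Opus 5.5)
Your proof is correct and has the same skeleton as the paper's: the discrete energy-dissipation balance, Lemma~\ref{le:WellPrepIC} for the initial energy, Fatou for $\calE(p(T))$, and the convergence of $\psi^N$ from the proof of Theorem~\ref{th:weakSol} for the $\calR^*$ term. The real difference is the primal dissipation. The paper sets $\eta^N:=\dot p^N/\sqrt{k(p^N)}$, which is bounded in $\rmL^2([0,T]\ti\Omega)$, and extracts a weakly convergent subsequence. It identifies the limit through the weak form of $\dot p^N=\sqrt{k(p^N)}\,\eta^N$, pairing strong $\rmL^2$ convergence of $\sqrt{k(p^N)}$ with weak convergence of $\eta^N$. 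It then uses $k(p)>0$ a.e.\ to get convergence of the whole sequence and concludes by weak lower semicontinuity. You replace this compactness argument by duality: Young's inequality against test functions, passage to the limit in the linear and quadratic terms, and a Riesz representation in $\rmL^2(k(p)\dd x\dd t)$. Here $\rmC^1_\rmc$ is dense because $k(p)\dd x\dd t$ is a finite measure. Your route avoids subsequences and the uniqueness step, and it produces $\dot p=k(p)w\in\rmL^1$ directly. It is the standard duality proof of joint lower semicontinuity of $(v,k)\mapsto\int v^2/k$, so it would survive even with only weak convergence of $k(p^N)$ to $k(p)$. The paper's route gives in return $\eta^N\rightharpoonup\dot p/\sqrt{k(p)}$ itself. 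Combined with $V^N_T\to V_T$ from Theorem~\ref{th:CurvesMaxSlope}, this yields strong convergence of $\eta^N$.

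Your caveat on the $\calR^*$ term is justified. The a.e.\ convergence of the averages $a^N(t):=[k_{p^N}(W'(p^N){-}G)]/[k_{p^N}]$ is taken for granted in the proof of Theorem~\ref{th:weakSol}, and the paper's Fatou step for $U_T$ rests on it. Fatou is therefore the wrong fallback, since it needs exactly this. Your main line only needs the weak limit, and that can be checked directly:
\begin{itemize}
\item By the stress control in \eqref{eq:Cond.lambda.cvx} and the energy bound, $a^N$ is bounded in $\rmL^\infty(0,T)$.
\item Hence $\zeta^N:=\psi^N+\sqrt{k(p^N)}\,a^N=\sqrt{k(p^N)}\,(W'(p^N){-}G)$ is bounded in $\rmL^2$. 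It converges a.e.\ along subsequences to $\sqrt{k(p)}\,(W'(p){-}G)$, and therefore also weakly.
\item Testing $[k_{p^N}]\,a^N=\int_\Omega\sqrt{k(p^N)}\,\zeta^N\dd x$ with functions of $t$ identifies the weak-$*$ limit of $a^N$, and $\psi^N\rightharpoonup\psi$ follows.
\end{itemize}
There is also a minor slip, shared with the paper. Since \eqref{eq:psi.psiN} uses $G$ rather than $G^N$, in fact $\dot p^N=-\sqrt{k(p^N)}\,\psi^N+k(p^N)(G^N{-}G)$. So your identification of both dissipation terms with $\frac12\int_\Omega|\psi^N|^2\dd x$ holds only up to an error of at most $\frac{\ol\kappa}2\|G{-}G^N\|_{\rmL^\infty}^2$, which is harmless as $N\to\infty$.
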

\begin{proof} We fix an arbitrary $T>0$ and start from the approximations
$p^N(t)=S^N_t( \bbP_N p_0)$ as defined in Theorem \ref{th:SGLocContract}. 

These ODE solutions are EDB solutions, namely
\begin{equation}
  \label{eq:EDB.pN} 
  \calE(p^N(T)) + \int_0^T\!\!\int_\Omega  \biggl(\frac{(\dot p^N)^2}{2k(p^N)} +
  \frac12 \big( \psi^N\big)^2\biggr) \dd t = \calE( \bbP_N p_0).
\end{equation}
with $ \psi^N $ from \eqref{eq:psi.psiN}. 
In particular, $p^N \in \rmA\rmC_2\big([0,T];(\bfP,\Bh)\big)$ with a bound
independent of $N$, namely
\begin{align*}
\int_0^T \!\! \int_\Omega \frac{(\dot p^N)^2}{k(p^N)} \dd x\dd t 
\leq 2\big(\calE(\bbP_N p_0) -\inf \calE\big) \leq 2\big(\calE(p_0)+1 -\inf
\calE\big).
\end{align*}
As $p^N(t,x)\geq \delta_N>0$ (see Lemma \ref{le:SublevPositive}), we can define
$\eta^N:= \dot p^N/\sqrt{k(p^N)}$ and find that $\eta^N$ is uniformly bounded
in $\rmL^2([0,T]\ti \Omega)$. Hence, there is a subsequence such that
$\eta^{N_k} \rightharpoonup \eta^\infty$ in $\rmL^2([0,T]\ti \Omega)$. Passing
to the limit in the weak formulation of $\dot p^{N_k} = \sqrt{k(p^{N_k})} 
\, \eta^{N_k}$ as in the proof of Theorem
\ref{th:weakSol}, we find that the equation
$\dot p = \sqrt{k(p)} \,\eta^\infty$ is also satisfied in a weak sense. 
Since $k(p)>0$ a.e.\ we conclude that $\eta^\infty$ is uniquely determined,
which implies $\eta^{N} \rightharpoonup \eta^\infty$ in
$\rmL^2([0,T]\ti \Omega)$ along the whole sequence, not just along the
subsequence $(\eta^{N_k})_{k \in \bbN}$. Thus, passing to the limit
$N \to \infty$ via Fatou's lemma we conclude 
\begin{align*}
V_T&= \int_0^T\!\!\int_\Omega \frac{(\dot p)^2}{k(p) }\dd x \dd t
 = \int_0^T\!\!\int_\Omega \big(\eta^\infty \big)^2 \dd x \dd t 
 \leq \liminf_{N\to \infty} V^N_T
\\
&\text{with } V^N_T:=\int_0^T\!\!\int_\Omega  
           \big(\eta^N\big)^2 \dd x \dd t
= \int_0^T\!\!\int_\Omega 
    \frac{(\dot p^N)^2}{k(p^N)} \dd x \dd t.
\end{align*}
  
Similarly, we can pass to the limit in the other terms of \eqref{eq:EDB.pN},
namely using the lemma of Fatou for the first and the third term, exploiting
the convergence $p^N(t)= S^N_t(\bbP_N p_0) \to
p(t)=S_t(p_0)$ for all $t>0$ guaranteed by Theorem \ref{th:SGLocContract}: 
$\calE(p(T))\leq \liminf_{N\to \infty} \calE(p^N(T))$ and 
\[
U_T:=  \int_0^T\!\!\int_\Omega \psi^2 \dd x \dd t 
 \leq \liminf_{N\to \infty} U^N_T  \quad \text{with }
 U^N_T:=\int_0^T\!\!\int_\Omega \big(\psi^N\big)^2 \dd x \dd t ,
\]
with $\psi$ from \eqref{eq:psi.psiN}. The fourth and last term
satisfies $\calE(\bbP_N p_0) \to \calE(p_0)$ as is shown in Lemma
\ref{le:WellPrepIC}. This finishes the proof.
\end{proof}

The next step consists in showing the existence of a chain rule for all
functions satisfying the assumptions given by the formulation of the EDI
solutions. In the theory of metric gradient systems this corresponds to showing
that the metric slope is a strong upper gradient. 

\begin{lemma}[Chain rule] 
\label{le:CahinRule} 
Let the assumptions \eqref{eq:k(p)bounds}, \eqref{eq:k.W.conds}, and 
\begin{equation}
  \label{eq:Monotone}
  \exists \, p_*>1 \ \forall \, p\in {]0,1/p_*]} \cup {[p_*,\infty[}: \ \ 
(p{-}1)W'(p) \geq 0 \text{ and } k(p)W''(p)+\tfrac{k'(p)}2 W'(p) \geq 0
\end{equation}
hold. Consider $T>0$ and a function $p :[0,T]\to \bfP$ such that 
\[
\sup_{t\in [0,T]} \calE(p(t)) < \infty \text{ and }
 \int_0^T\!\!\int_\Omega  \biggl( \frac{(\dot p)^2}{2k(p)} +
  \frac{k(p)}{2} \biggl( W'(p){-}G - \frac{[k_p(W'_p{-}G)]}{[k_p ]} 
     \biggr)^2 \biggr) \dd t  < \infty.
\]
Then, for all $t_0,t_1\geq 0$ with $t_0 < t_1$ we have 
\begin{equation}
  \label{eq:ChainRule}
  \calE(p(t_1)) - \calE(p(t_0)) = \int_{t_0}^{t_1} \!\!\int_\Omega \dot p \:\biggl(
W'(p){-}G - \frac{[k_p(W'_p{-}G)]}{[k_p]} \biggr) \dd x \dd t. 
\end{equation}
\end{lemma}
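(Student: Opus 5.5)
Since $[\dot p]=0$, the constant $[k_p(W'_p{-}G)]/[k_p]$ contributes nothing to the right-hand side of \eqref{eq:ChainRule}. Writing $\eta:=\dot p/\sqrt{k(p)}$ and $\psi$ as in \eqref{eq:psi.psiN}, the integrand equals $\eta\psi$, and Cauchy--Schwarz together with the hypothesis gives $\eta\psi\in\rmL^1([0,T]\ti\Omega)$. The formal identity is $\frac{\rmd}{\rmd t}\int_\Omega\bigl(W(p)-Gp\bigr)\dd x=\int_\Omega(W'(p)-G)\dot p\dd x$. The difficulty is that $W'(p)$ alone need not be integrable against $\dot p$; we only control the combination $\sqrt{k_p}\,(W'(p){-}G-c(t))$. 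My plan is therefore to regularize by truncation in $p$ and to pass to the limit using monotone convergence, with \eqref{eq:Monotone} supplying the needed signs.

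\emph{Step 1 (truncation).} For $n>p_*$ I define $W_n\in\rmC^1$ by $W_n=W$ on $[1/n,n]$ and extend it affinely outside this interval with slopes $W'(1/n)$ and $W'(n)$. For $n>p_*$, \eqref{eq:Monotone} gives $W'(1/n)\le0\le W'(n)$. Since $W'_n$ is bounded and $p\in\rmW^{1,1}(0,T;\rmL^1)$ (indeed $|\dot p|\le\sqrt{k_p}\,|\eta|$ with $\sqrt{k_p}\in\rmL^2$), the function $t\mapsto\int_\Omega\bigl(W_n(p)-Gp\bigr)\dd x$ is absolutely continuous and satisfies the chain rule with integrand $(W_n'(p)-G)\dot p$. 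Using $[\dot p]=0$, I rewrite this integrand as $\eta\sqrt{k_p}\,(W'_n(p)-G-c(t))$.

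\emph{Step 2 (pointwise domination).} On $\{1/n\le p\le n\}$ the truncated integrand coincides with $\eta\psi$. On $\{p>n\}$ the quantity $W_n'(p)=W'(n)$ lies between $0$ and $W'(p)$. This uses the monotonicity of $W'$ on $[p_*,\infty[$, which I plan to derive from the convexity-type condition: $kW''+\frac{k'}{2}W'\ge0$ means that $\sqrt{k}\,W'$ is nondecreasing there, i.e.\ $\partial_b^2\wt W\ge0$. The resulting bound, rather than monotonicity of $W'$ itself, is what I will actually use: $\sqrt{k(p)}\,|W'_n(p)|\le\sqrt{k(p)}\,|W'(p)|$ should follow from $\sqrt{k(n)}W'(n)\le\sqrt{k(p)}W'(p)$ for $p\ge n$, and I intend to retool the truncation to $\sqrt{k}\,W'_n:=\sqrt{k(n)}\,W'(n)$ if needed. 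The analogous argument applies on $\{p<1/n\}$. Consequently
\[
\sqrt{k_p}\,|W'_n(p)-G-c|\le\sqrt{k_p}\,\bigl(|W'(p)-G-c|+2|G|+2|c|\bigr),
\]
which is square integrable, because $\sqrt{k_p}\in\rmL^2$ and $c(t)\in\rmL^2(0,T)$. The latter follows from $\int k_p(W'-G)^2=\int\psi^2+c^2[k_p]$ combined with the stress control in \eqref{eq:Cond.lambda.cvx} and $\sup_t\calE(p(t))<\infty$. Dominated convergence then sends the right-hand sides to $\int\!\!\int\eta\psi$.

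\emph{Step 3 (energies).} On the left-hand side, $W_n(p(t))\to W(p(t))$ pointwise. Because of the signs of $W'$ near $0$ and $\infty$, the affine extension lies below $W$ and increases in $n$ for $n>p_*$, so monotone convergence yields $\int W_n(p(t_j))\to\int W(p(t_j))<\infty$. This gives \eqref{eq:ChainRule}.

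The main obstacle is the domination in Step 2, namely controlling $\sqrt{k}\,W'_n$ by $\sqrt{k}\,W'$ in the tails. This is exactly where the second condition in \eqref{eq:Monotone} is needed, and choosing the truncation in terms of the variable $b(p)$ (i.e.\ truncating $\wt W$ affinely in $b$) is the cleanest way to make it work.
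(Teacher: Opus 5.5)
There is a genuine gap. Truncating $W$ instead of $p$ is workable in principle, but Steps 2 and 3 as written rely on properties the hypotheses do not give. The second condition in \eqref{eq:Monotone} only says that $\sqrt{k}\,W'$ is nondecreasing on the tails, i.e.\ that $\wt W$ is convex in the variable $b$. It gives neither monotonicity of $W'$ nor convexity of $W$ in $p$. For example, $k(p)=\kappa p$ with $W(p)=p+\eps\sin(\log p)$ for $p\geq p_*$ is compatible with all assumptions, including the splitting \eqref{eq:W.wellprepared}, yet $W''<0$ on infinitely many long intervals. For $n$ in such an interval:
\begin{itemize}
\item $W'(n)>W'(p)$ for $p$ slightly above $n$;
\item the tangent extension $W_n$ lies \emph{above} $W$ there;
\item $W_{n+1}(p)<W_n(p)$ for $p>n{+}1$.
\end{itemize}
So neither the pointwise bound ``$W'(n)$ lies between $0$ and $W'(p)$'' in Step 2 nor the monotone convergence in Step 3 is available. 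Your attempted rescue does not work either. From $\sqrt{k(n)}\,W'(n)\le\sqrt{k(p)}\,W'(p)$ you only get $W'(n)\le\sqrt{k(p)/k(n)}\,W'(p)$, and the factor $\sqrt{k(p)/k(n)}$ is unbounded on $\{p>n\}$.

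The affine truncation in $b$ that you mention at the end does fix Steps 2 and 3. Then $\sqrt{k(p)}\,W_n'(p)=\sqrt{k(n)}\,W'(n)$ on $\{p>n\}$, which is dominated by $\sqrt{k(p)}\,W'(p)$ via the sign condition and the monotonicity of $\sqrt{k}\,W'$. Convexity of $\wt W$ in $b$ then gives $W_n\le W$, nondecreasing in $n$. However, it breaks Step 1: on $\{p<1/n\}$ you get $W_n'(p)=\sqrt{k(1/n)}\,W'(1/n)/\sqrt{k(p)}$, which blows up as $p\to0$. So ``$W_n'$ is bounded'' no longer justifies the chain rule. You would need an extra argument that $t\mapsto b(p(t,x))$ is absolutely continuous with derivative $\eta$. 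One way is to use $b(\cdot{+}\eps)$ and dominated convergence with $|\dot p|/\sqrt{k(p{+}\eps)}\le\sqrt{\ol\kappa/\ul\kappa}\,|\eta|$.

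The paper avoids all of this by truncating the state, $p_R=\max\{1/R,\min\{p,R\}\}$, which amounts to extending $W$ by constants. Then $\dot p_R=\dot p\,\bbone_{\{1/R<p<R\}}$ a.e., so the tails drop out of the dissipation integral. Moreover, $W(p_R)\le W(p)$, nondecreasing in $R$, follows from the sign condition $(p{-}1)W'(p)\ge0$ alone. The second condition of \eqref{eq:Monotone} is used there only through the monotonicity of $k\,(W')^2$ on the tails, to get $\sqrt{k(p_R)}\,W'(p_R)\to\sqrt{k(p)}\,W'(p)$ in $\rmL^2$.
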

\begin{proof} The result is established by approximation. 

First, we assume that
there exists an $R>1$ such that $p$ satisfies the lower and upper bound
$p(t,x)\in [1/R,R]$ a.e. Then we can use that $k$, $W$, and $W'$ are uniformly
continuous on $[1/R,R]$ and that $p\in \rmH^1([0,T];\rmL^2(\Omega))$. 
Thus, \eqref{eq:ChainRule} follows from standard arguments. 

Secondly, we consider a general function $p$ satisfying the assumptions. Moreover,
we fix $T>0$ and assume without loss of generality $t_0=0$ and $t_1=T$. 
For all $R>1$ we define the approximations $p_R$ and the auxiliary functions
$\eta_R$ and $\psi_R$ via 
\begin{align*}
p_R &:= \max\big\{  1/R,\ \min\{ p, R\}\,\big\} \ \in [1/R, R], 
\\ 
\eta_R &:= \frac{\dot p_R}{\sqrt{k(p_R)}},\quad  
\psi_R:= \sqrt{k(p_R)}\Big(W'(p_R){-}G -
       \frac{[k_{p_R}(W'_{p_R}{-}G)]}{[k_{p_R}]}  \Big)  ,
\end{align*}
Observe that $p_R$ does not necessarily lie in $\bfP$, but that is not needed
here. As shown above, the chain rule \eqref{eq:ChainRule} holds for $p_R$, and
it can be rewritten as 
\[
 \calE(p_R(T)) - \calE(p_R(0)) = \int_0^T \!\!\int_\Omega \eta_R \;
 \psi_R  \dd x \dd t. 
\]

The strategy is to pass to the limit $R\to \infty$. By the first condition in
\eqref{eq:Monotone} we have that $R \mapsto W(p_R(t,x))$ is monotone in
$R\in {[p_*,\infty[}$. Hence, Beppo Levi's monotone convergence theorem implies
$\calE(p_R(t)) \to \calE(p(t))$ for all $t\in [0,T]$.

Similarly we observe that $R\mapsto \eta_R =\dot p_R/\sqrt{k(p_R)}$ is
monotone, because it equals $\dot p/\sqrt{k(p)}$ a.e.\ where
  $|p(t,x)|\leq R$ and $0$ otherwise. Thus we have $|\eta_R|\leq |\eta|$
  and $\|\eta_R\|_{\rmL^2}^2 \to \| \eta\|_{\rmL^2}^2$ by monotone
  convergence. This implies the strong convergence 
\[
\eta_R \to \eta := \frac{\dot p}{\sqrt{k(p)}} \quad \text{in }
  \rmL^2([0,T]\ti \Omega ).
\]

For showing convergence of $\psi_R$, we derive additional a priori
bounds for the function $p$. Let $\mathsf{C_E}:=\sup\calE(p(t)) < \infty$. 
By \eqref{eq:k(p)bounds} and the third condition in \eqref{eq:Cond.lambda.cvx}
we find  
\[
\bigl| \,\bigl[ k_p(W'_p{-}G)\bigr]\,\bigr| 
\leq B_1\calE(p(t))  + B_2 + \ol\kappa \|G\|_{\rmL^\infty} 
\leq B_1\mathsf{C_E} + B_2 +
\ol\kappa \|G\|_{\rmL^\infty}=:C_1 < \infty.
\] 
The same bound also holds with $p$ replaced by $p_R$ because $\calE(p_R)\leq
\calE(p)\leq \mathsf{C_E}$. Similarly, we can bound $ [k_{p_R}G W'_{p_R}]$, and
$[k_{p_R}G^2]$ uniformly in $R\geq p_*$ and $t\in [0,T]$.

Using also $[k_p] \in [\ul\kappa, \ol \kappa]$ we see that the assumption 
$\int_0^T \!\! \int_\Omega \psi^2 \dd x \dd t < \infty$ implies the bound 
$ \int_0^T\!\!\int_\Omega k(p) \big( W'(p)\big)^2 \dd x \dd t < \infty$. 
We now exploit condition \eqref{eq:Monotone} once again. For $\Phi(p)
:=k(p)\big(W'(p)\big)^2$ we find $\Phi'(p)=W'(p)\big( 2 k(p)W''(p)+
k'(p)W'(p)\big)$, hence $p\mapsto \Phi(p)$ is decreasing for $p\in {]0,1/p_*]}$
and increasing for $p\in {[p_*,\infty[}$. Arguing as above we conclude 
\[
\zeta_R:=\sqrt{k(p_R)}\, W'(p_R) \ \to   \ \zeta:= \sqrt{k(p)}\, W'(p) 
 \quad \text{strongly in }  \rmL^2([0,T]\ti \Omega).
\]
The representation $\psi_R=\zeta_R+ \sqrt{k(p_R)}\big( -G +
[k_{p_R}(W'_{p_R}{-}G)]/[k_{p_R}]\big)$ and the above a priori estimates allow
us to apply Lebesgue's DCT to $\psi_R-\zeta_R$ and we find the desired
convergence $\psi_R\to \psi $ in $\rmL^2([0,T]\ti \Omega)$.
  
With this and the fact that $\eta \psi$ is exactly the desired integrand,
the chain rule \eqref{eq:ChainRule} follows for general $p$. 
\end{proof}

After these preparatory results we are ready to state out main result
concerning the property of curves of maximal slope. As emphasized in Remark
\ref{re:MetricRiemGS} we do not use the metric frame work but 
consider the Riemannian gradient system $\big( \bfP,\calE, \calR\big)$ using 
\begin{align*}
&\calR(p,\dot p)=\frac12\langle \bbG(p)\dot p, \dot p\rangle = \int_\Omega
\frac{(\dot p)^2}{2\:k(p)}  
 \dd x \ \text{ and } 
\\ 
&\calR^*\big(p,{-}\rmD\calE(p)\big)=\frac12\langle \rmD\calE(p),
\bbK(p)\rmD\calE\rangle =  
\int_\Omega \frac{k(p)}2 \Big(W'(p){-}G - \frac{[k_p(W'_p{-}G)]}{[k_p]} \Big)^2
\dd x .
\end{align*}

\begin{theorem}[Curves of maximal slope]
\label{th:CurvesMaxSlope} 
Assuming conditions \eqref{eq:k(p)bounds}, \eqref{eq:k.W.conds}, and \eqref{eq:Monotone}, the solutions $p(t)=S_t(p_0)$ with $p_0 \in \dom(\calE)$ are curves of maximal
slope, in the sense that $t \mapsto \calE(p(t))$ is absolutely continuous and
\begin{equation}
  \label{eq:Evol.CMS}
\frac\rmd{\rmd t} \calE(p(t)) = -\calR \big( p(t) \dot p(t)\big) - 
 \calR^* \big( p(t), {-} \rmD\calE(p(t)) \big) \quad \text{a.e.\ in } {]0,\infty[}.
\end{equation}
In particular we have the equation 
\begin{equation}
  \label{eq:Evol.L2}
  \dot p = - \sqrt{k(p)}\, \psi \text{ with } \psi =  \sqrt{k(p)} \Big( W'(p){-}G -
\frac{[k_p(W'_p{-}G)]}{[k_p]}\Big) \in \rmL^2([0,T]\ti \Omega). 
\end{equation}
Because of $\sqrt{k(p)}\in \rmL^\infty([0,T];\rmL^2(\Omega)$ we further have
$\dot p \in  \rmL^1([0,T]\ti \Omega)$. 

Moreover, the approximations $p^N(t)=S^N_t(\bbP_N p_0)$ satisfy
$\calE(p^N(t))\to \calE(p(t))$ for all $t>0$, i.e.\ well-preparedness is
preserved for all positive times.  
\end{theorem}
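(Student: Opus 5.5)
The proof combines Lemma~\ref{le:EDIsol} (EDI) with the chain rule of Lemma~\ref{le:CahinRule}, in the standard way. The curve $p(t)=S_t(p_0)$ satisfies the hypotheses of Lemma~\ref{le:CahinRule}. Indeed, $\sup_{t}\calE(p(t))\leq \calE(p_0)<\infty$ because the energy is a Lyapunov function (Theorem~\ref{th:SGLocContract}). The dissipation integral is finite by \eqref{eq:EDIsol} together with $\calE\geq\inf\calE>-\infty$.

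\emph{Step 1: the equation \eqref{eq:Evol.L2}.} In the proof of Lemma~\ref{le:EDIsol} we obtained $\eta^\infty=\dot p/\sqrt{k(p)}\in\rmL^2$ and the weak relation $\dot p=\sqrt{k(p)}\,\eta^\infty$. Theorem~\ref{th:weakSol} gives $\dot p=-\sqrt{k(p)}\,\psi$ in the distributional sense. Since $k(p)>0$ a.e., this identifies $\eta^\infty=-\psi$. This is \eqref{eq:Evol.L2}, and $\psi\in\rmL^2([0,T]\ti\Omega)$ by \eqref{eq:EDIsol}. Moreover, H\"older's inequality with $\sqrt{k(p)}\leq\sqrt{\ol\kappa\,p}$ and $[p]=1$ yields $\dot p\in\rmL^1$.

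\emph{Step 2: energy balance.} By the chain rule \eqref{eq:ChainRule} the energy $t\mapsto\calE(p(t))$ is absolutely continuous, with integrand $\dot p\,\psi/\sqrt{k(p)}=\eta\,\psi$. Using $\eta=-\psi$ from Step 1,
\[
\eta\,\psi=-\tfrac12\eta^2-\tfrac12\psi^2 ,
\]
so pointwise a.e.\ in $t$ we get $\frac{\rmd}{\rmd t}\calE(p)=-\calR(p,\dot p)-\calR^*(p,-\rmD\calE(p))$, which is \eqref{eq:Evol.CMS}. Even without identifying $\eta=-\psi$, the chain rule combined with Young's inequality gives ``$\geq$'', and the EDI gives ``$\leq$'', so equality holds a.e.

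\emph{Step 3: energy convergence.} For the approximations we have the EDB \eqref{eq:EDB.pN}, and $\calE(\bbP_Np_0)\to\calE(p_0)$ by Lemma~\ref{le:WellPrepIC}. Step 2 integrated over $[0,t]$ gives the limit EDB. Fatou's lemma gives the three liminf inequalities: for $\calE(p^N(t))$, for $V^N_t$, and for $U^N_t$. Since the sums on both sides converge to equal values, each liminf must be an equality; in particular $\liminf\calE(p^N(t))=\calE(p(t))$. Applying the same argument to any subsequence shows that $\limsup_N\calE(p^N(t))\leq\calE(p_0)-\liminf(V^N_t+U^N_t)/2\leq\calE(p(t))$. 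Hence $\calE(p^N(t))\to\calE(p(t))$ for every $t>0$.

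The main obstacle is the rigorous identification $\eta^\infty=-\psi$, which requires the limit $\dot p$ to be a genuine weak time derivative in $\rmL^1$ and not only a distribution. I would handle this by testing both weak formulations with the same $\varphi\in\rmC^1_\rmc$ and using density. The key point is that the heavy approximation work was already done in the chain-rule lemma, so what remains here is bookkeeping.
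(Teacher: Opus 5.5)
Your proposal is correct, but your Step~1 reaches \eqref{eq:Evol.L2} by a different route than the paper.

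The paper does not use Theorem~\ref{th:weakSol} here. It inserts the chain rule \eqref{eq:ChainRule} into the EDI \eqref{eq:EDIsol} and completes the square, obtaining $0\geq \frac12\int_0^T\!\int_\Omega(\eta+\psi)^2\,\rmd x\,\rmd t$. Thus $\eta=-\psi$ and the energy identity \eqref{eq:Evol.CMS} come out of a single computation; this is exactly your fallback ``Young plus EDI'' remark.

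You instead identify $\eta^\infty=-\psi$ beforehand. You compare the two distributional identities for $\pl_t p$, one from the proof of Lemma~\ref{le:EDIsol} and one from Theorem~\ref{th:weakSol}. This is legitimate: $\sqrt{k(p)}\,\eta^\infty$ and $\sqrt{k(p)}\,\psi$ both lie in $\rmL^1([0,T]\ti\Omega)$, so their difference vanishes a.e., and $k(p)>0$ a.e.

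Your route has a concrete advantage. It gives \eqref{eq:Evol.L2}, together with $\dot p\in\rmL^1$ and $\dot p/\sqrt{k(p)}\in\rmL^2$, without the monotonicity condition \eqref{eq:Monotone}. That condition enters only through Lemma~\ref{le:CahinRule}, so in your argument the chain rule is needed only for the absolute continuity of $t\mapsto\calE(p(t))$ and for \eqref{eq:Evol.CMS}. The paper's route, in turn, is the standard gradient-flow mechanism, where the evolution equation follows from the energy-dissipation balance, and it does not need the weak-solution theorem at all.

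Your Step~3 is the paper's argument. Your explicit bound $\limsup_N\calE(p^N(t))\leq\calE(p_0)-\frac12\liminf_N(V^N_t+U^N_t)\leq\calE(p(t))$ is just a cleaner way of saying that all inequalities in the liminf chain must be equalities.
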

\begin{proof} The proof is now straight forward. We let $\eta =\dot
  p/\sqrt{k(p)}$ such that  
\[
\big\langle \bbG(p(t))\dot p(t), \dot p(t)\big\rangle =
\big\|\eta\big\|_{\rmL^2(\Omega)}^2 \quad \text{and}\quad 
\big\langle \rmD\calE(p(t)),  
\bbK(p(t))\rmD\calE(p(t)) \big\rangle= \big\|\psi\big\|_{\rmL^2(\Omega)}^2 .
\]

Lemma \ref{le:EDIsol} shows 
$\calE(p(T))+ \frac12\int_0^T(\|\eta\|_{\rmL^2}^2 {+}\|\psi\|_{\rmL^2}^2 )\dd
t\leq \calE(p(0))$. In particular, the assumption of the Lemma
\ref{le:CahinRule} holds and we can insert the chain rule to eliminate the
energy difference $\calE(p(T))-\calE(p(0))$, which leads to
\begin{equation}
 \label{eq:EDBviaCR}
  \begin{aligned}
   0 
   \overset{\text{EDI}}&{\geq} \int_0^T\! \Big( \frac12 \|\eta\|_{\rmL^2}^2 
   {+}\frac12\|\psi\|_{\rmL^2}^2 \Big) \dd t + \calE(p(T))-\calE(p(0)) 
  \\
   \overset{\text{(chain rule)}}&{=} \int_0^T\!\!\int_\Omega \Big( \frac12\eta^2
      + \frac12 \psi^2 + \eta \,\psi\Big) \dd x \dd t
    = \frac12 \int_0^T\!\!\int_\Omega \big( \eta +\psi\big)^2  \dd x \dd t 
    \geq 0.
\end{aligned} 
\end{equation}
Thus, we conclude $\eta=-\psi$ and \eqref{eq:Evol.L2} is established. 

Moreover, using $\eta\psi\in \rmL^1([0,T]\ti \Omega)$ and the chain rule
\eqref{eq:ChainRule} we conclude that  $t\mapsto \calE(p(t))$ is absolutely
continuous with 
\[
\frac\rmd{\rmd t} \calE(p(t)) = \int_\Omega \eta(t)\psi(t)\dd x = -
\int_\Omega \Big( \frac12\,\eta^2 +\frac12\,\psi^2\Big) \dd x, 
\]
and \eqref{eq:Evol.CMS} is established as well. 

For the final statement we go back to the proof of Lemma~\ref{le:EDIsol}
and use the approximate solutions $p^N(t)=S^N_t(\bbP_N p(0))$ which define
$\eta^N$, $\psi^N$, $V_T^N = \| \eta^N\|_{\rmL^2}^2$, and $ U_T^N = \|
\psi^N\|_{\rmL^2}^2$. There we have shown 
\begin{subequations}
\label{eq:Liminf.Est} 
\begin{align}
&\calE(p(T))\leq  \liminf_{N\to \infty} \calE(p^N(T)), \quad 
V_T\leq \liminf_{N\to \infty} V^N_T, 
\quad \text{and } U_T\leq \liminf_{N\to \infty} U^N_T ,
\\
&\begin{aligned}
  &\calE(p(T))+ \tfrac12 V_T+\tfrac12 U_T
   \leq \liminf_{N\to \infty} \Big( \calE(p^N(T)) +  \tfrac12 V^N_T +  
    \tfrac12 U^N_T\Big)\\ &\quad  \leq \limsup_{N\to \infty} \Big( \calE(p^N(T))
    + \tfrac12 V^N_T + \tfrac12 U^N_T\Big) 
   = \lim_{N\to \infty} \calE(\bbP_N p(0))  = \calE(p(0)).
\end{aligned}
\end{align}
\end{subequations}
However, \eqref{eq:EDBviaCR} gives
$\calE(p(0))= \calE(p(T))+ \tfrac12V_T+\tfrac12U_T$ such that all inequalities
in \eqref{eq:Liminf.Est} must be equalities. In particular, 
$\calE(p^N(T))\to \calE(p(T))$, $V^N_T\to V_T$, and $U^N_T\to
U_T$, and the result follows as
the choice of $T>0$ is arbitrary.
\end{proof}

\subsection{Sublevel EVI solutions}
\label{su:SublevelEVI} 

We now exploit the theory for Evolutionary Variational Inequalities (EVI)
developed in \cite{MurSav20GFEV} more precisely, 
we show that the theory developed there can be applied in the finite-dimensional ODE
setting and that these ODEs are equivalent to sublevel EVI where the convexity
parameter $\Lambda$ only depends on the sublevel, but not on $N$. Then we are
able to perform the limit $N\to \infty$ by using the established convergences 
$p^N(t)=S^N_t(\bbP_N p_0) \to p(t)=S_t(p_0)$ and $\calE(p^N(t))\to
\calE(p(t))$. This final limit is restricted to the case where we have a
uniform global sublevel stretching rate 
\[
 \Lambda^\glo(E):= \liminf_{N\to \infty} \Lambda^\glo_N(E) \quad
\text{with } \Lambda^\glo_N(E)=  \lim_{\Delta\to \infty}
\Lambda^\loc_N(\Delta,E),  
\]
see Definition \ref{de:GlobSublevSR}. Subsequently, we will assume $
\Lambda^\glo(E) >-\infty$ and emphasize that we have shown this only for
the case $k(p)= \kappa p$, where $\calD^k$ is a multiple of $\Bh$.

The following result is a simple adaption of three results in
\cite{MurSav20GFEV}, namely Eqn.\ (2.30), Proposition 3.11, and Lemma 3.13.
We provide the full (and simple) proof for the reader's convenience. We recall the definition of the right upper Dini derivative (see
\cite[App.\,A]{MurSav20GFEV}: 
\[
\frac{\rmd^+}{\rmd t} \,f(t) := \limsup_{h\searrow 0} \frac1h \big( f(t{+}h) -
f(t)\big). 
\]

\begin{proposition}[Sublevel EVI for the ODE in $\bfP_N$] 
\label{pr:SublevEVI:ODE}
Fix $N \in \bbN$, and consider the gradient-flow equation \eqref{eq:DiscrODE}
for the gradient system $\big( \bfP_N,\calE_N, \bbK_N\big)$. The global
stretching rates $\Lambda_N^\glo(E)$ are bounded from below by
$\wh\Lambda(N)>-\infty$, i.e.\ uniformly in
the energy level $E > \inf\calE_N$, and the solutions
$ p^N(t)=S^N_t(p_0^N)$ satisfy the sublevel EVI
\begin{align}
 \label{eq:SubEVI.ODE}
  \forall\, &E > \inf\calE_N\ \forall \, q^N, p^N_0\in \Sigma_N(E): 
 \\ \nonumber
 & \frac12 \frac{\rmd^+}{\rmd t}\calD^k_N(p^N(t), q)^2 + 
    \frac{\Lambda^\glo_N(E)}2\calD^k_N(p^N(t), q)^2 
     \leq \calE(q) - \calE(p^N(t)) \ \text{ a.e.\ in }{]0,\infty[}.
\end{align} 
\end{proposition}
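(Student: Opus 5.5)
The proof has two parts: first a uniform-in-$E$ lower bound $\wh\Lambda(N)$ for $\Lambda^\glo_N(E)$, then the sublevel EVI obtained by the transported-geodesic argument of Proposition \ref{pr:ComparApprox} and Theorem \ref{th:UnifLipschitz}, now applied to the energy.

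\emph{Step 1: the uniform bound.} Since $\calE_N$ is continuous on $\bfP_N$ with values in $\R\cup\{\infty\}$, the set $E_N^*=\sup_{\bfP_N}\calE_N$ may be infinite. So I do not use a global sup. Instead, for a sublevel $\Sigma_N(E)$, geodesics between its points stay inside the compact set where all $p_i\geq\delta$ for some $\delta$. I expect to obtain this from the sublevel positivity Lemma \ref{le:SublevPositive} together with a compactness argument for $\GE_N(\Delta,E)$ over all $\Delta$. Indeed, $\calD^k_N\leq C_\mafo{upp}\Bh\leq C_\mafo{upp}\pi/2$, so $\GE_N(\Delta,E)=\GE_N(C_\mafo{upp}\pi/2,E)$ for all large $\Delta$. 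Hence $\Lambda^\glo_N(E)=\Lambda^\mafo{inf}_N(\GE_N(\Delta_0,E))$ is finite, because $\Lambda^\mafo{inf}_N$ is finite on every sublevel by Corollary \ref{co:LowBddLambda}. This step is in fact only needed for finiteness; the EVI uses the rate on each sublevel separately.

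\emph{Step 2: the EVI.} I follow \cite[Prop.\,3.11, Lem.\,3.13]{MurSav20GFEV}. Fix $E$, $q,p_0^N\in\Sigma_N(E)$, and $t$. Let $\gamma$ be a geodesic in $(\bfP_N,\calD^k_N)$ from $p^N(t)$ to $q$ lying in $\Sigma_N(\wt E)$ with $\wt E=\GE_N(\Delta,E)$ for large $\Delta$; this is possible since $p^N(t)\in\Sigma_N(E)$. Along $\gamma$, the function $s\mapsto\calE_N(\gamma(s))$ is $\Lambda$-convex with $\Lambda=\Lambda^\mafo{inf}_N(\wt E)=\Lambda^\glo_N(E)$. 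The reason is that $\frac{\rmd^2}{\rmd s^2}\calE_N(\gamma)=\langle\wt\bbH_N\dot\gamma,\dot\gamma\rangle\geq\Lambda\|\dot\gamma\|^2_{\bbG_N}$ for geodesics, where the Hessian is covariant so the Christoffel terms are already included in $\wt\bbH_N$. This gives the "above-tangent" inequality
\[
\calE_N(q)\geq\calE_N(p^N(t))+\langle\rmD\calE_N(p^N(t)),\dot\gamma(0)\rangle+\tfrac{\Lambda}{2}\calD^k_N(p^N(t),q)^2 .
\]

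Next I apply the first variation of the squared distance in a smooth finite-dimensional Riemannian manifold. With $v:=\dot p^N(t)=-\bbK_N\rmD\calE_N$, it reads
\[
\tfrac12\frac{\rmd^+}{\rmd t}\calD^k_N(p^N(t),q)^2\leq-\langle\bbG_N\dot\gamma(0),v\rangle=\langle\rmD\calE_N,\dot\gamma(0)\rangle .
\]
This holds for the upper Dini derivative at every $t$, even when the geodesic is not unique: I compare $\calD^k_N(p^N(t{+}h),q)$ with the length of a modified curve that joins $p^N(t{+}h)$ to $\gamma(\epsilon)$ and then follows $\gamma$. Combining the two displays gives \eqref{eq:SubEVI.ODE}.

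The main obstacle is justifying geodesic $\Lambda$-convexity and the first-variation inequality in the possibly incomplete manifold $\bfP_N$, which has a boundary at $p_i=0$. I handle this through the compactness in Step 1: geodesics in a sublevel stay a positive distance from the boundary, where everything is smooth.
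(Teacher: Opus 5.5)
Your Step~2 is essentially the paper's proof. It uses a geodesic $\gamma$ from $p^N(t)$ to $q$ inside a sublevel and the above-tangent inequality from $\Lambda^\glo_N(E)$-convexity along $\gamma$; your identification of $\wt\bbH_N$ with the covariant Hessian is correct. It then obtains $\frac12\frac{\rmd^+}{\rmd t}\calD^k_N(p^N(t),q)^2\leq-\langle\bbG_N\dot p^N(t),\gamma'(0)\rangle$ from the triangle inequality through $\gamma(\epsilon)$ plus a local computation at $p^N(t)$, and closes with $\bbG_N\dot p^N=-\rmD\calE_N$.

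The genuine gap is the input all of this needs. Geodesics between points of $\Sigma_N(E)$ must stay uniformly away from the faces $\{p_i=0\}$, i.e.\ $\GE_N(\Delta,E)<\infty$. Otherwise neither the Hessian bound nor the smoothness used in the first variation is available along $\gamma$. You rightly note $\sup_{\bfP_N}\calE_N=\infty$, so the bound $\GE_N\leq E^*_N$ from the proof of Lemma~\ref{le:loc.Delta}, on which the paper's argument tacitly rests, is unavailable. But your substitute does not deliver the missing bound:
Lemma~\ref{le:SublevPositive} controls only the endpoints, and the diameter bound only removes the dependence on $\Delta$. Since $\int_0^1 k(q)^{-1/2}\dd q<\infty$, the faces are at finite $\calD^k_N$-distance, so a minimizer could a priori reach them. (For $k=\kappa p$, Lemma~\ref{le:DensBhatt} settles this.)

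One argument that works uses the coordinates $b_i=b(p_i)$. There the metric is $\frac1N\sum_i\dot b_i^2$ and $\bfP_N$ is the hypersurface $\sum_i\beta(b_i)=N$ with $\beta=b^{-1}$. Since $\beta'(0)=\sqrt{k(0)}=0$, $\beta$ extends evenly, and each reflection $b_i\mapsto-b_i$ is an isometry of the extended compact hypersurface. A minimizer touching $b_i=0$ at an interior time then has two options. Either it gets a corner after folding $b_i\mapsto|b_i|$, or, if $\dot b_i=0$ there, it coincides with its reflection by uniqueness for the geodesic ODE (this needs e.g.\ $k\in\rmC^{1,1}$), forcing $b_i\equiv0$. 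Compactness of the set of geodesics with endpoints in the compact set $\Sigma_N(E)$ then yields a uniform $\delta>0$.

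Separately, you do not prove the first assertion of the proposition, the lower bound $\wh\Lambda(N)$ uniform in $E$. Corollary~\ref{co:LowBddLambda} only gives $\bfL^{\mathrm{inf}}(E)$, which decreases linearly in $E$. The paper attributes the bound to compactness of $\bfP_N$, which does not hold: $\bfP_N$ is the open simplex and $\calE_N$ is unbounded on it. Without extra structure the claim can in fact fail. In $H_N[p]/k(p)$, the term $-\frac12k'(p_i)\,[k_p(W'(p){-}G^N)]/[k_p]$ tends to $-\infty$ at cells with $k'(p_i)<0$ when some other $p_j\searrow0$ with $k(p_j)W'(p_j)\to-\infty$, which \eqref{eq:Cond.lambda.cvx} permits. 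Testing with $\xi=\pm1$ on two such cells with $p_i=p_{i'}$ (so $[k_p\xi]=0$) gives $\Lambda^{\mathrm{inf}}_N(E)\to-\infty$ as $E\to\infty$.

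For $k'\geq0$ the uniform bound does hold. The reasons are that $p_i\leq N$ on $\bfP_N$ and that $\sqrt{k}\,W'=\pl_b\wt W$ is bounded above near $0$ by $\pl_b^2\wt W\geq\lambda_W$. Since \eqref{eq:SubEVI.ODE} only uses finiteness of $\Lambda^\glo_N(E)$ for each $E$, your weaker Step~1 suffices for the EVI. You should, however, state explicitly that this is all you prove.
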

\begin{proof} For notational simplicity we drop the super and subscripts $N$ in
  this proof, except at sets and spaces. The finiteness of $\Lambda^\glo_N$
  follows easily from the fact that $\bfP_N$ is a compact subset, hence the
  energy $\calE_N$ and the eigenvalues of $\bbH_N$ have finite lower
  bounds. Of course, these bounds need not be independent of $N$. 

\emph{Step 1:}
We first prove the sublevel version of \cite[Eqn.\,(2.30)]{MurSav20GFEV}. From
$p(t)=S_t(p_0), q\in \Sigma_N(E)$ and the definition of $\Lambda^\glo(E)$
we know that there exists a geodesic $s \mapsto \gamma(s) \in \bfP_N$
with $\gamma(0)=p(t)$ and $\gamma(1)=q$ such that 
\[
\calE(\gamma(s)) \leq (1{-}s) \calE(p(t)) + s \calE(q) -
\frac{\Lambda^\glo(E)}2\, s(1{-}s) \, \calD^k\bigl(p(t), q \bigr)^2.
\]
Subtracting $\calE(p(t))$ on both sides and dividing by $s>0$ we can pass to
the limit $s\searrow 0$ and find 
\[
\big\langle \rmD\calE(p(t)), \gamma'(0) \big\rangle \leq \calE(q) -
\calE(p(t)) - \frac{\Lambda^\glo(E)}2 \,\calD^k( p(t), q)^2. 
\]

\emph{Step 2:} Next we show that $s \mapsto \frac1s\frac{\rmd^+}{\rmd t}
\calD^k( p(t), \gamma(s))^2$ is nonincreasing, see
\cite[Lem.\,3.13]{MurSav20GFEV}. Because of $\calD^k \big( p(t),\gamma(s)
\big) = s \calD^k \big( p(t), q \big) $ and the continuity of $t \mapsto
p(t)$ we have  
\begin{align*}
\frac1s \frac{\rmd^+}{\rmd t} \, \calD^k \big( p(t), \gamma(s)\big)^2
& = \frac{2\calD^k(p(t),\gamma(s)\big)}{s} \:  
 \frac{\rmd^+}{\rmd t} \, \calD^k\big( p(t), \gamma(s)\big)
\\
& =  2\calD^k(p(t),q \big) \:  
 \frac{\rmd^+}{\rmd t} \, \calD^k\big( p(t), \gamma(s)\big).
\end{align*}
Thus it suffices to show monotonicity of $s \mapsto  \frac{\rmd^+}{\rmd t} \,
\calD^k\big( p(t), \gamma(s)\big)$. For $s_1< s_2$ we have 
\begin{align*}
&\calD^k \big(p(t{+}h),\gamma(s_2)\big) - \calD^k \big(p(t), \gamma(s_2) \big)
\\
&\leq \Big(\calD^k\big(p(t{+}h),\gamma(s_1)\big)+ 
 \calD^k\big(\gamma(s_1),\gamma(s_2)\big) \Big) -
 \Big(\calD^k\big(p(t), \gamma(s_1)) +\calD^k\big(\gamma(s_1),\gamma(s_2)\big) \Big) 
\\
&= \calD^k\big(p(t{+}h),\gamma(s_1)\big) - \calD^k\big(p(t), \gamma(s_1) \big) ,
\end{align*}
where we use the triangle inequality and the geodesic property of $\gamma$ with
$p(t)=\gamma(0)$. Dividing by $h>0$ and taking the limit $h \searrow 0$ gives the
desired monotonicity. 

\emph{Step 3:} Using Step 2 for $s=1$ (where $\gamma(1)=q$) and $s\searrow 0$
we obtain  
\[
\frac12\frac{\rmd^+}{\rmd t} \, \calD^k\big( p(t), q \big)^2 
\leq \lim_{s \searrow 0}
  \frac1{2s}\frac{\rmd^+}{\rmd t} \, \calD^k\big( p(t), \gamma(s) \big)^2 .
\]
In the last expression we only evaluate $\calD^k(p,g)$ for $p(\tau)$ and
$\gamma(s)$ near $p(t)$. Hence, the differentiabilities of $p$, $\gamma$, and
$\calD^k$ (locally near $p(t))$) provides the explicit expression 
\[
\lim_{s \searrow 0}
  \frac1{2s} \frac{\rmd^+}{\rmd t} \, \calD^k\big( p(t), \gamma(s) \big)^2  =
  - \big\langle \bbG(p(t)) \dot p(t), \gamma'(0) \big\rangle. 
\]
To see this, use that $\bfP_N$ is an $(N{-}1)$-dimensional affine
space and differentiate the approximation $\frac12 \big\langle
\bbG(p(t)) \,(p(\tau{+}h){-}\gamma(s)) , p(\tau{+}h) {-}\gamma(s) \big\rangle$ in $h$ and $s$.

\emph{Step 4: Conclusion.} Combining the results of Step 3, the gradient-flow
equation $\bbG \dot p=- \rmD\calE(p(t))$, and the estimate of Step 1 yields
\begin{align*}
\frac12\frac{\rmd^+}{\rmd t} \, \calD^k\big( p(t), q \big)^2 & \leq
-\big\langle \bbG(p(t)) \dot p(t), \gamma'(0) \big\rangle =  \big\langle \rmD
\calE(p(t)), \gamma'(0) \big\rangle 
\\
&\leq \calE(q) -
\calE(p(t)) - \frac{\Lambda^\glo(E)}2 \,\calD^k( p(t), q)^2. 
\end{align*}
This proves \eqref{eq:SubEVI.ODE}. 
\end{proof}

To pass to the limit $N\to \infty$, we make use of a derivative-free version of
the EVI provided in \cite[Thm.\,3.3]{MurSav20GFEV}. We will again use the
function $\mathsf M_\lambda(t) := \int_0^t \ee^{-\lambda s} \dd s$.  It can be
checked that the equivalence proof between the three different version is
compatible with our sublevel formulation, i.e.\ all constructions are contained
in a given sublevel. The following result heavily relies on the assumption that
the uniform global sublevel stretching rate $\Lambda^\glo(E)$ is larger than
$-\infty$. This we can only guarantee for the Bhattacharya case $k(p)=\kappa p$
under the additional condition \eqref{eq:W.doubling}.

\begin{theorem}[Sublevel EVI for the PDE]
\label{th:SublevelPDE} 
Under the assumptions \eqref{eq:k(p)bounds}, \eqref{eq:k.W.conds}, and the further assumption that $\Lambda^\glo(E)>-\infty $ for all $E>\inf \calE$, the solutions
$p(t)=S_t(p_0)$ with $p_0\in \dom(\calE)$ constructed in Theorem
\ref{th:SGLocContract} are sublevel EVI solutions for the metric gradient
system $ \big( \bfP, \calE, \calD^k\big)$ in the following sense:
\begin{align}
 \label{eq:SubEVI.PDE}
  \forall\, &E > \inf\calE\ \forall \, q, p_0\in \Sigma(E) \ \forall \, s\geq
  0\ \forall \, t>s: 
 \\ \nonumber
 & \frac{\ee^{\Lambda^\glo(E)(t-s)}}2 \calD^k(p(t), q)^2 - \frac12\calD^k(p(s), q)^2
    \leq \mathsf{M}_{\Lambda^\glo(E)}(t{-}s) \Big(\calE(q) - \calE(p(t))
  \Big) . 
\end{align} 
\end{theorem}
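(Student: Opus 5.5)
Our plan is to prove \eqref{eq:SubEVI.PDE} first for the spatially discrete solutions and then pass to the limit $N\to\infty$ in this derivative-free form.

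\emph{Discrete integrated EVI.} Fix $E>\inf\calE$ and $q,p_0\in\Sigma(E)$. Put $p^N_0:=\bbP_N p_0$ and $q^N:=\bbP_N q$. By Lemma \ref{le:WellPrepIC}, for $\eps>0$ and all large $N$ both lie in $\Sigma_N(E{+}\eps)$. Moreover $\Lambda^\glo(E{+}\eps)\leq\liminf_N\Lambda^\glo_N(E{+}\eps)$, so $\Lambda_N^\glo(E{+}\eps)\geq \Lambda^\glo(E{+}\eps)-\eps=:\lambda$ for large $N$.

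Proposition \ref{pr:SublevEVI:ODE} gives the differential sublevel EVI in $\bfP_N$. We then use that the distance $\calD^k_{N'}$ on $\bfP_{N'}$, for $N'$ a multiple of $N$ (say $N'=M!$), is needed to match the definition \eqref{eq:Def.calD.k}. This is the main obstacle: Proposition \ref{pr:SublevEVI:ODE} is stated with $\calD^k_N$ on $\bfP_N$. Our remedy is to apply it in $\bfP_{M!}$ to the same ODE solution $S^N_t(p^N_0)\in\bfP_N\subset\bfP_{M!}$. The difficulty is that the ODE in $\bfP_{M!}$ uses $G^{M!}$ rather than $G^N$. We will instead run the $M!$-system from $\bbP_{M!}p_0$ and compare, using Proposition \ref{pr:ComparApprox} and Theorem \ref{th:SGLocContract}. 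In fact it is simpler to take $N=M!$ directly, with initial data $\bbP_{M!}p_0$ and $q^{M!}$. This is possible because the interchange of limits in \eqref{eq:Def.calD.k} can be handled through the uniform equivalence with $\Bh$ (Lemma \ref{le:CompareDist}, Proposition \ref{pr:def.calD.k}). We need $\calD^k_{M!}(r^{M!},\bbP_{M!}q)\to\calD^k(r,q)$ whenever $r^{M!}\to r$ in $\rmL^1$. We will prove this from the triangle inequality, the bound $\calD^k_{K}\leq C_\mafo{upp}\Bh$, and monotonicity in refinement, noting that the introduction claims exactly this convergence.

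The differential inequality integrates by the Gronwall-type argument of \cite[Thm.\,3.3]{MurSav20GFEV}. Multiply by $\ee^{\lambda(t-s)}$ and use that $t\mapsto\calE(p^N(t))$ is nonincreasing, so $\calE(q)-\calE(p^N(r))\leq \calE(q)-\calE(p^N(t))$ for $r\leq t$. This yields
\[
\tfrac{\ee^{\lambda(t-s)}}2 \calD^k_N(p^N(t), q^N)^2 - \tfrac12\calD^k_N(p^N(s), q^N)^2 \leq \mathsf M_\lambda(t{-}s)\bigl(\calE_N(q^N)-\calE_N(p^N(t))\bigr).
\]

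\emph{Limit passage.} Let $N=M!\to\infty$. By Theorem \ref{th:SGLocContract}, $p^N(t)\to p(t)$ and $p^N(s)\to p(s)$ in $\rmL^1$, so both distance terms converge by the convergence property above. By Theorem \ref{th:CurvesMaxSlope}, $\calE(p^N(t))\to\calE(p(t))$; at $t=0$ this is Lemma \ref{le:WellPrepIC}. Also $\calE_N(q^N)\to\calE(q)$. We therefore obtain \eqref{eq:SubEVI.PDE} with $\Lambda^\glo(E{+}\eps)-\eps$ in place of $\Lambda^\glo(E)$.

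Finally, let $\eps\searrow0$. Here we will either use right-continuity of $E\mapsto\Lambda^\glo(E)$ or, more safely, avoid $\eps$ altogether. If $\calE(q)<E$ and $\calE(p_0)<E$ strictly, well-preparedness puts the data in $\Sigma_N(E)$ for large $N$, so $\eps=0$ works there. The boundary case is then recovered by approximating $q$ along $S_\tau(q)$ for $\tau\searrow0$ (energy strictly decreasing unless $q$ is stationary) together with continuity of all terms. This last step is the one we expect to be most delicate.
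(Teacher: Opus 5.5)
Your main line is sound, but you organize the limit differently from the paper.

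\emph{The paper's route.} It fixes $N$ and regards $p^N(t)=S^N_t(\bbP_Np_0)$ as a solution on $\bfP_{LN}$ of the gradient system with the frozen loading $G^N$. It applies Proposition~\ref{pr:SublevEVI:ODE} there with $\calD^k_{LN}$ and lets $LN=M!\to\infty$. Since $p^N(t),\bbP_Nq\in\bfP_N$, this is exactly the inner limit $\calA$ from the proof of Proposition~\ref{pr:def.calD.k}. Only then does it send $N\to\infty$, using $\calD^k\leq C_\mafo{upp}\Bh$.

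\emph{Your route.} You stay on the diagonal $N=M!$ with the genuine systems $(\bfP_{M!},\calE_{M!},\bbK_{M!})$. The price is the diagonal convergence $\calD^k_{M!}(r^{M!},\bbP_{M!}q)\to\calD^k(r,q)$, and this does hold as you indicate. For the lower bound, use $\calD^k_{M!}\geq\calA$ on $\bfP_{M!}\times\bfP_{M!}$ (monotonicity under refinement) together with $\calA(\bbP_{M!}r,\bbP_{M!}q)\to\calD^k(r,q)$. For the upper bound, fix $N$ and route through $\bbP_Nr,\bbP_Nq\in\bfP_{M!}$ (since $N\mid M!$), using the triangle inequality and $\calD^k_{M!}\leq C_\mafo{upp}\Bh$; then let $M\to\infty$ and $N\to\infty$.

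\emph{What each route buys.} Yours uses only the rates $\Lambda^\glo_{M!}$ that the hypothesis $\liminf_N\Lambda^\glo_N(E)>-\infty$ actually controls. The paper tacitly also needs a uniform rate for the auxiliary systems on $\bfP_{LN}$ with loading $G^N$. That is harmless in the Bhattacharya case, where the bound depends only on $\|G\|_{\rmL^\infty}$, but it is not contained in the stated hypothesis. You are also more careful than the paper on two points it passes over: $\bbP_Np_0,\bbP_Nq$ lie only in $\Sigma_N(E{+}\eps)$, and the $\liminf$ gives $\Lambda^\glo_N\geq\Lambda^\glo-\eps$ only eventually.

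Three points need fixing.

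\emph{The Gronwall factor has the wrong sign.} Multiplying by $\ee^{\lambda(r-s)}$, integrating over $[s,t]$ and using monotonicity of the energy produces $\int_0^{t-s}\ee^{\lambda u}\dd u=\mathsf M_{-\lambda}(t{-}s)$, not $\mathsf M_\lambda(t{-}s)$. The former is the correct factor ($\mathsf E_\lambda$ of Muratori--Savar\'e). The $\mathsf M_{\Lambda^\glo(E)}$ in the statement, inherited by the paper's proof, is a sign slip. Take a stationary $p$, $q\neq p$ and $\Lambda^\glo(E)>0$: the left side grows like $\ee^{\Lambda^\glo(E)(t-s)}$ while the right side stays bounded. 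So state and prove the inequality with $\mathsf M_{-\Lambda^\glo(E)}$.

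\emph{Energy convergence is more than you need.} For the term $-\calE_N(p^N(t))$ you only need $\liminf_N\calE(p^N(t))\geq\calE(p(t))$. This is lower semicontinuity of $\calE$ under $\rmL^1$-convergence (Fatou, $W$ bounded below, $G\in\rmL^\infty$). Invoking Theorem~\ref{th:CurvesMaxSlope}, as the paper also does, imports \eqref{eq:Monotone}, which is not assumed here.

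\emph{The closing $\eps\searrow0$ step is incomplete.} Moving $q$ along $S_\tau(q)$ fails for stationary $q$ with $\calE(q)=E$. It also does nothing for $p_0$ with $\calE(p_0)=E$. Non-stationary $p_0$ can be handled by restarting at $p(\sigma)$ and letting $\sigma\searrow0$; stationary $p_0$ cannot. Without right-continuity of $E\mapsto\Lambda^\glo(E)$, what you rigorously obtain is the inequality for data in $\{\calE<E\}$, or on $\Sigma(E)$ with $\lim_{E'\searrow E}\Lambda^\glo(E')$ in place of $\Lambda^\glo(E)$. The paper simply takes $\bbP_Np_0,\bbP_Nq\in\Sigma_N(E)$ for granted, so this lacuna is shared, not introduced by your route.
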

\begin{proof}
We first observe that \cite[Thm.\,3.3]{MurSav20GFEV} allows us to rewrite
\eqref{eq:SubEVI.ODE} in exactly the same derivative-free form, namely for all
$q^N\in \Sigma_N(E)$ we have
\begin{align*}
 & \frac{\ee^{\Lambda^\glo(E)(t-s)}}2 \calD_{LN}^k(p^N(t), q^N)^2 -
 \frac12\calD_{LN}^k(p^N(s), q^N)^2 
    \leq \mathsf{M}_{\Lambda^\glo(E)}(t{-}s) \Big(\calE_N(q^N) - \calE_N(p^N(t))
  \Big) ,
\end{align*}
where we already used that the the solutions $p^N$ are also solutions in
$\bfP_{LN}$ for all $L\in \N$ if $\calE$ is replaced by $\calE_N$. 
Moreover, we already used that the global sublevel stretching rate is uniform,
i.e.\ independent of $LN$. 

Now we let $M \geq N$ and choose $L$ such that $LN=M!$. Then, the limit
$M \to \infty$ allows us to replace $\calD^k_{LN}$ by the limit $\calD^k$ (cf.\
Proposition \ref{pr:def.calD.k}):
\begin{align*}
 & \frac{\ee^{\Lambda^\glo(E)(t-s)}}2 \calD^k(p^N(t), q^N)^2
  - \frac12\calD^k(p^N(s), q)^2  \leq 
 \mathsf{M}_{\Lambda^\glo(E)}(t{-}s) \Big(\calE_N(q^N) - \calE_N(p^N(t)) \Big) ,
\end{align*}

Finally, we can pass to the limit $N\to \infty$ by using $p^N(t)\to p(t)$ in
$\big(\bfP,\Bh \big)$ together with the energy convergence $\calE_N(p^N(t))\to
\calE(p(t))$ established in Theorem \ref{th:CurvesMaxSlope}. Moreover, we can
choose $q^N=\bbP_N q$ and exploit Lemma \ref{le:WellPrepIC} to find 
$\calE_N(q^N)\to \calE(q)$. 
\end{proof}

Of course, we can apply the equivalence in \cite[Thm.\,3.3]{MurSav20GFEV} once
again and obtain the more classical differential form of the EVI, namely 
\begin{align*}
% \label{eq:SubEVI.PDE.diff}
  \forall\, &E > \inf\calE\ \forall \, q, p_0\in \Sigma(E): 
 \\ \nonumber
 & \frac12 \frac{\rmd^+}{\rmd t}\calD^k(S_t(p_0), q)^2 + 
    \frac{\Lambda^\glo(E)}2\calD^k(S_t(p_0), q)^2 
     \leq \calE(q) - \calE(S_t(p_0)) \ \text{ a.e.\ in }{]0,\infty[}.
\end{align*}

As a consequence, we can obtain a full result for linear Eulerian viscosity
$\nu(\pl_x u) \pl_x \dot u$ with $\nu(p)=\hat\nu_0/p$ or $k(p)=\kappa p$ with
$\kappa =1/\hat \nu_0$. This case was treated in \cite{MiOrSe14ANVM} using the
minimizing movement scheme and relying totally on the 
(non-geodesic) Hellinger distance $\He|_\bfP$. As a result the EVI$_\lambda$ derived
there (see \cite[p.\,1336]{MiOrSe14ANVM}) contains an extra term $C_M
B_\xi(p(t),q)$ on the right-hand side. Our result below is based on the true
geodesic distance $\calD^k = \frac2{\sqrt{\kappa}} \Bh$  and thus provides 
a true EVI without extra terms. Moreover, for this case  Corollary
\ref{co:EnergyBhGeod} provides an explicit lower bound for $\Lambda^\glo(E)$,
namely $\frac4\kappa \,\bfL^\glo(E)$, see \eqref{eq:GlobStreRate}. For
notational simplicity the following result is formulated for the case
$k(p)=4p$, i.e.\ $\calD^k= \Bh$, the general case follows easily by rescaling
time.  

\begin{corollary}[EVI for $\calD^k=\Bh$] 
\label{co:Bhatt.EVI} Under the assumptions $k(p)=4p$, \eqref{eq:k.W.conds}, and \eqref{eq:W.doubling} we have 
$\Lambda^\glo(E)\geq  \bfL^\glo(E)>0$ (as defined in
\eqref{eq:GlobStreRate}). The solutions $p(t)=S_t(p_0)$ with $p_0\in
\dom(\calE)$ satisfy the sublevel EVI 
\begin{align}
 \label{eq:SubEVI.Bhatt}
  \forall\, &E > \inf\calE\ \forall \, q, p_0\in \Sigma(E): 
 \\ \nonumber
 & \frac12 \frac{\rmd^+}{\rmd t}\Bh(p(t), q)^2 + 
    \frac{\bfL^\glo(E)}2\,\Bh(p(t), q)^2 
     \leq \calE(q) - \calE(p(t)) \ \text{ a.e.\ in }{]0,\infty[}.
\end{align} 
\end{corollary}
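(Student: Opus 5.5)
The plan is to reduce the corollary to Theorem \ref{th:SublevelPDE} and then return to the differential form. The only new input is a lower bound on the uniform global sublevel stretching rate $\Lambda^\glo(E)=\liminf_N \Lambda^\glo_N(E)$. The key observation is that for $k(p)=4p$ we have $\ul\kappa=\ol\kappa=4$, so $C_\mafo{low}=C_\mafo{upp}=1$. By the preceding discussion, $\calD^k_N=\Bh|_{\bfP_N}$ for every $N$, and hence $\calD^k=\Bh$ on $\bfP$ by Proposition \ref{pr:def.calD.k}.

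\textbf{Step 1: the geodesic energy level.} Bhattacharya geodesics are unique. By Lemma \ref{le:Bhatt.PN.Complete} they stay in $\bfP_N$, so they are exactly the $\calD^k_N$-geodesics. Corollary \ref{co:EnergyBhGeod} then shows that any two points of $\Sigma_N(E)$ are joined by a geodesic lying in $\Sigma_N(\wt E)$, with $\wt E=2C_1E+C_2+(2C_1{+}1)\|G\|_{\rmL^\infty}$ independent of $N$ and of $\Delta$. Hence $\GE_N(\Delta,E)\le\wt E$ for all $\Delta>0$. Since $\Lambda^\mafo{inf}_N$ is nonincreasing, this gives $\Lambda^\loc_N(\Delta,E)\ge \Lambda^\mafo{inf}_N(\wt E)$. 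Corollary \ref{co:LowBddLambda} bounds the right-hand side below by $\bfL^\mafo{inf}(\wt E)=\bfL^\glo(E)$.

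\textbf{Step 2: the global rate.} Letting $\Delta\to\infty$ yields $\Lambda^\glo_N(E)\ge\bfL^\glo(E)$ for all $N$, and therefore $\Lambda^\glo(E)\ge\bfL^\glo(E)>-\infty$. (I read the stated ``$>0$'' as ``$>-\infty$''; only finiteness is needed.) Assumptions \eqref{eq:k(p)bounds} and \eqref{eq:k.W.conds} hold, so Theorem \ref{th:SublevelPDE} applies and gives the derivative-free sublevel EVI \eqref{eq:SubEVI.PDE} with $\calD^k=\Bh$ and rate $\Lambda^\glo(E)$.

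\textbf{Step 3: from $\Lambda^\glo(E)$ to $\bfL^\glo(E)$.} The EVI with a larger $\Lambda$ implies the one with a smaller $\Lambda$. In differential form this is immediate, since $\frac{\Lambda}{2}\Bh^2$ decreases as $\Lambda$ decreases. So I would first convert \eqref{eq:SubEVI.PDE} into the differential form using the equivalence \cite[Thm.\,3.3]{MurSav20GFEV}, as in the remark after Theorem \ref{th:SublevelPDE}. I would then replace $\Lambda^\glo(E)$ by $\bfL^\glo(E)$, which yields \eqref{eq:SubEVI.Bhatt}.

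\textbf{Main obstacle.} The delicate point is that the equivalence theorem is applied in a sublevel setting. We must check that all intermediate objects it uses stay in $\Sigma(E)$. Points on $t\mapsto S_t(p_0)$ do so by the Lyapunov property in Theorem \ref{th:SGLocContract}. For geodesics or interpolants, the needed control comes from the uniform sublevel $\wt E$ supplied by Step 1.
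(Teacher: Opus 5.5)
Your proposal is correct and follows the same route as the paper. The paper justifies the corollary only through the remark preceding it: for $k(p)=4p$ one has $\calD^k_N=\Bh|_{\bfP_N}$ and $\calD^k=\Bh$, and Corollary~\ref{co:EnergyBhGeod} bounds $\GE_N(\Delta,E)$ by $\wt E$ uniformly in $N$ and $\Delta$. Hence $\Lambda^\glo(E)\geq\bfL^\mathrm{inf}(\wt E)=\bfL^\glo(E)$, and Theorem~\ref{th:SublevelPDE}, together with the differential form of the EVI and monotonicity in the rate, gives \eqref{eq:SubEVI.Bhatt}.

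You are also right to read the stated ``$\bfL^\glo(E)>0$'' as ``$>-\infty$''. By Corollary~\ref{co:LowBddLambda} this quantity can be negative in general, and only its finiteness is used.
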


\appendix 
\section{An example for \texorpdfstring{$\calD_{2M}|_{\bfP_2} <
    \calD^k_2$}{D(2M)|P2 less than P2}}
\label{se:calD.Counterexample}

Here we show that for general $k$ we cannot expect that $\calD^k_{NM}$ restricted
to $\bfP_N$ equals $\calD^k_N$. To provide an explicit example we let $N=2$
and consider $M\gg 1$. The function $k$ reads
\[
k(p)=4 p \ \text{ for } p\in[0,2]\qquad \text{and }\quad k(p)= \frac{p}{\eps} \ 
\text{ for } p>2,
\]  
with $\eps > 0$ chosen appropriately later. Moreover, we take start and end
points $p_0,p_1\in \bfP_2$: 
\[
    p_0(x) = 2\,\bbone_{{[1/2, 1[}}(x) \quad \text{and} \quad 
    p_1(x) = 2\,\bbone_{{[0, 1/2[}}(x).
\]
As $p_0p_1\equiv 0$ we have $\He(p_0,p_1)= \sqrt{2} \lneqq
\pi/2=\Bh(p_0,p_1)$, see Section \ref{se:Def.Bhatt}. 

For $M\gg 1$ we can define  a curve in $\bfP_{2M}$ that is close to the
Hellinger geodesics of the form  
\[
\wt p(s,x) = \alpha_M(s) \bbone_{{[0, 1/M[}}(x) + 2s^2 \bbone_{{[1/M,1/2[}}(x) +
\beta_M(s) \bbone_{{[1/2,1[}}(x) \quad \text{with } \alpha_M(s),\beta_M(s)\geq 0.
\]
Setting $s_M=M^{-1/2}$, we define $\alpha_M$ and $\beta_M$ as piecewise
polynomials via 
\begin{equation*}
  \alpha_M(s) = 2 s^2 M \text{ for } s \in [0, s_M], \quad
  \beta_M(s) = \beta_M(s_M) \frac{(1-s)^2}{(1-s_M)^2} \text{ for } s \in\, ]s_M, 1],
\end{equation*} 
defining $\alpha_M$ and $\beta_M$ on $]s_M, 1]$ and $[0, s_M]$ respectively via
the constraint
\begin{equation*}
  \frac{\alpha_M(s)}{M} + 2s^2 \biggl( \frac{1}{2} - \frac{1}{M} \biggr)
  + \frac{\beta_M(s)}{2} = 1,
\end{equation*}
which ensures $\wt{p}(s) \in \bfP_{2M}$ for all $s \in [0, 1]$. 

From $\alpha_M(0)=0$ we find $\wt p(0)=p_0$ and from $\beta_M(1)=0$ we find
$\wt p(1)=p_1$ as desired. Thus, $\calD^k_{2M}(p_0,p_1)^2 $ can be estimated
from above by 
\[
J^2:=\int_0^1 \hspace{-0.4em} \int_\Omega \frac{\bigl(\pl_s \wt p(s,x)\bigr)^2 }{ k\bigl( \wt
  p(s,x)\bigr) } \dd x \dd s.
\]
This integral can be calculated explicitly, but for proving the claim it
suffices to show that it is close to $\He(p_0,p_1)^2 $ and then exploit
$ \He(p_0,p_1)^2 \lneqq \Bh(p_0,p_1)^2$. 

Note that $\wt p(s,x)$ and in particular the local dissipation 
$\bigl(\partial_s\wt{p}(s, x)\bigr)^2/k(\wt{p}(s, x))$ are close to the
Hellinger geodesic $p^\He(s,x)=2s^2 \bbone_{{[0,1/2[}}(x) + 2(1{-}s)^2
\bbone_{{[1/2,1[}}(x) $ and its constant local dissipation 2,  
except for $x\in {[0,1/M[}$. However, we have $\alpha_M(s) >2 $ for $s\in
{]s_M,1[}$, because of $\alpha_M(s_M)=2=\alpha_M(1)=2$ and $\alpha''_M(s)\leq 4-2M<0$ for
$s\in {]s_M,1[}$. Exploiting this one find a constant $C>0$ such that for all
$M\geq 2$ and $\eps>0$ we have
$J^2\leq \He(p_0,p_1)^2 + C\big( M^{-1/2} + \eps M\big)$. Thus, choosing
$\eps=M^{-3/2}$ and making $M$ sufficiently large, we obtain
$\calD^k_{2M}(p_0,p_1) \leq J\lneqq \Bh(p_0,p_1) = \calD^k_2(p_0,p_1)$. 
   
\paragraph*{Acknowledgments.} AM was partially supported by the Deutsche
Forschungsgemeinschaft (DFG, German Research Foundation) under Germany's
Excellence Strategy \emph{The Berlin Mathematics Research Center MATH+}
(EXC-2046/1, project ID: 390685689). BS was supported by the MAC-MIGS Centre
for Doctoral Training under EPSRC grant EP/S023291/1.  AM is grateful for the
hospitality of Heriot-Watt University during a visit in 2024 where this work
was initiated. BS would like to thank John Ball for introducing him to this
topic, and for the helpful conversations and guidance.

\small
%\footnotesize

\bibliographystyle{alphaAMBS_doi}
\bibliography{visco_BS_AM2}

\end{document}